\DeclareMathOperator*{\esssup}{ess\,sup}
\DeclareMathOperator*{\essinf}{ess\,inf}
\newcommand{\innerprod}[2]{\left\langle{#1},#2\right\rangle}
\newcommand{\matcomplexify}[1]{\mathcal{M}{(#1)}}
\newcommand{\operationcount}[1]{\mathcal{O}{(#1)}}
\newcommand{\vectocirc}[1]{\mathcal{C}(#1)}
\newcommand{\vectocpoly}[2]{\mathcal{F}\left(#1,#2\right)}
\newcommand{\strangfcoloftoep}[1]{{\bf v}_S\left(#1\right)}
\newcommand{\bfrefnumber}[1]{${\bf (#1)}$}
\newcommand{\matcmplxexpress}[1]{\left[\begin{array}[c]{cc}
		\phi_1(#1)&-\phi_2(#1)\\
		&\\
		\overline{\phi_2(#1)}&\overline{\phi_1(#1)}
	\end{array}\right]}
\newcommand{\veccomplexify}[1]{\mathcal{V}{\left(#1\right)}}
\newcommand{\invveccomplexify}[1]{\mathcal{V}^{-1}{(#1)}}
\newcommand{\veccmplxexpress}[1]{\left[\begin{array}[c]{c}
		\phi_1(#1)\\
		~\\
		\overline{\phi_2(#1)}
	\end{array}\right]}
\newcommand{\uniti}{{{\tt i}}}
\newcommand{\unitj}{{{\tt j}}}
\newcommand{\unitk}{{{\tt k}}}
\newcommand{\unitp}{{{\tt p}}}
\newcommand{\unitq}{{{\tt q}}}
\newcommand{\unitr}{{{\tt r}}}
\newcommand{\transpose}{{{\rm T}}}
\begin{document}
\title{Hermitian Quaternion Toeplitz Matrices by Quaternion-valued Generating 
	Functions}


\author[Xue-Lei Lin, Michael K. Ng and Junjun Pan]{Xue-Lei Lin\affil{1}\ and Michael K. Ng\affil{2}\comma\corrauth\ and Junjun Pan\affil{2}}
\address{\affilnum{1}\ School of Science, Harbin Institute of Technology, Shenzhen 518055, China\\
\affilnum{2}\ Department of Mathematics, Hong Kong Baptist University, Hong Kong}

%
%
\emails{{\tt linxuelei@hit.edu.cn} (Xue-Lei Lin), {\tt michael-ng@hkbu.edu.hk} ( Michael K. Ng),  {\tt junjpan@hkbu.edu.hk} (Junjun Pan)}

\begin{abstract}
In this paper, we study Hermitian quaternion Toeplitz matrices generated by 
quaternion-valued functions. We show that such generating function 
must be the sum 
of a real-valued function and an odd function with imaginary component. 
This setting is different from the case of Hermitian complex Toeplitz matrices generated by real-valued functions only. 
By using of 2-by-2 block complex representation of quaternion 
matrices, we give a quaternion version of 
Grenander-Szeg\"{o} theorem stating the distribution of eigenvalues of Hermitian quaternion Toeplitz matrices in terms of its generating function. 
As an application, we investigate Strang's circulant preconditioners for 
Hermitian quaternion Toeplitz linear systems arising from quaternion signal processing.
We show that Strang's circulant preconditioners can be diagonalized by 
discrete quaternion Fourier transform matrices 
whereas general quaternion circulant matrices cannot be diagonalized by them. 
Also we verify the theoretical and numerical convergence results of 
Strang's circulant preconditioned conjugate gradient method for 
solving Hermitian quaternion Toeplitz systems.
\end{abstract}

\ams{15B05, 15B33, 15B57, 65F08, 65F10
}
\keywords{Quaternion-valued generating function, Toeplitz matrix, Grenander-Szeg\"{o} theorem, circulant preconditioner, signal processing}

\maketitle


	\section{Introduction}\label{sec:introduction}
A quaternion number is a number of form $a_0+a_1\uniti+a_2\unitj+a_3\unitk$, where $a_s$'s are all real numbers; $\uniti,\unitj,\unitk$ are imaginary units such that 
$$		
\uniti^2=\unitj^2=\unitk^2=-1,\quad\uniti\unitj=-\unitj\uniti=\unitk,\quad \unitj\unitk=-\unitk\unitj=\uniti,\quad \unitk\uniti=-\uniti\unitk=\unitj;
$$
{\color{black}We call a quaternion as pure quaternion if $a_0=0$.} 
In this work, we 
are interested in solving Hermitian quaternion Toeplitz systems.
\begin{equation}\label{hpdqtsystem}
	T_n \mathbf{u}=\mathbf{b},
\end{equation}
where $b$ is a given quaternion vector of size $n\times 1$; {\color{black}$u$ is the unknown to be solved;} $T_n$ is an $n\times n$ Hermitian quaternion Toeplitz matrix of the following form
\begin{equation*}
	T_n=\left[
	\begin{array}
		[c]{ccccc}
		t_0  &  t_{-1} &\ldots   & t_{2-n}   & t_{1-n} \\
		t_1  &  t_0 & t_{-1}    &\ddots & t_{2-n} \\
		\vdots&\ddots &\ddots&\ddots&\vdots\\
		t_{n-2}  & \ddots& t_1 &  t_0  & t_{-1} \\
		t_{n-1}  &  t_{n-2} & \ldots &  t_1  &  t_{0} 
	\end{array}
	\right],
\end{equation*}
with $t_s$'s are quaternion numbers; $t_{-s}$ is conjugate of $t_s$ (see Section 2 for the definition) for each $s>0$; 
$t_0$ is a real number. 
Quaternion Toeplitz systems have attracted a growing attention in recently years, thanks to its various applications including signal and image processing \cite{took2008quaternion,miron2023quaternions,jia2021structure,li2023structure,donatelli2006improved,di2005superoptimal,donatelli2007boundary}.

The main aim of this paper is to 
show that the eigenvalues of Hermitian 
quaternion Toeplitz matrices can be characterized by their quaternion-valued 
generating function. 
This refers to a quaternion-version of Grenander-Szeg\"{o} theorem.
We note that such generating function must be the sum 
of a real-valued function and an odd function with imaginary component. 
This setting is different from the case of Hermitian complex Toeplitz matrices generated by real-valued functions only. 
As an application, 
we investigate circulant preconditioned conjugate gradient 
methods for solving Hermitian quaternion Toeplitz systems arising from quaternion signal processing. 
Here we employ Strang's circulant preconditioners (as examples) 
and 
show that the eigenvalues of Strang's circulant matrices are 
computed by the values evaluated at the partial sum of generating functions. 
We remark that a quaternion circulant matrix in general is not diagonalized by discrete Fourier transform matrix.
Also we verify when the circulant preconditioned conjugate gradient method is applied to solving such Hermitian quaternion Toeplitz systems, its 
superlinear convergence can be achieved. 

The outline of this paper is given as follows.  In Section 2, we study the spectra
spectra of Hermitian quaternion Toeplitz matrices
constructed by quaternion-valued generating function.
In Section 3, we study the spectra of circulant preconditioned Toeplitz matrices.
Numerical results for Hermitian quaternion Toeplitz matrices arising quaternion
signal processing are reported in Section 4. Finally, some
concluding remarks are given in Section 5.


\subsection{Preliminaries}

The real number field, the complex number field and the quaternion algebra are denoted by $\mathbb{R}$, $\mathbb{C}$ and $\mathbb{Q}$,  respectively. 
The set of all positive real numbers is denoted by $\mathbb{R}^{+}$.	
The spaces of all $m\times n$ real matrices, complex matrices and quaternion matrices are denoted by $\mathbb{R}^{m\times n}$, $\mathbb{C}^{m\times n}$ and $\mathbb{Q}^{m\times n}$,  respectively.	
If there is no ambiguity, we treat  $\mathbb{R}^{1\times 1}$, $\mathbb{C}^{1\times 1}$ and $\mathbb{Q}^{1\times 1}$ as 	$\mathbb{R}$, $\mathbb{C}$ and $\mathbb{Q}$, respectively.
The set of all positive integers and the of all nonnegative integers are denoted by $\mathbb{N}^{+}$ and $\mathbb{N}$, respectively.

{\color{black}The dot product of two quaternion numbers $x=x_0+x_1\uniti+x_2\unitj+x_3\unitk$ and $y=y_0+y_1\uniti+y_2\unitj+y_3\unitk$ 
	with $x_s,y_s\in\mathbb{R}$ for $s=0,1,2,3$, is defined as
	\begin{equation*}
		x \cdot y:=\sum\limits_{s=0}^{3}x_sy_s.
	\end{equation*}
	The modulus of $x$ is defined by	 
	$|x|:=\sqrt{x\cdot x}=\sqrt{|x_0|^2+|x_1|^2+|x_2|^2+|x_3|^2}$.
	We say $x$ is a unit if and only if $|x|=1$.	Two pure quaternions $x,y$ are said to be orthogonal  if and only if $x\cdot y=0$.
}

\begin{definition}\label{orthogonalunitsdef}
	We call a triple of pure quaternions $(\unitp,\unitq,\unitr)$ orthonormal if and only if  $\unitp,\unitq,\unitr$ are all units and any two of them are orthogonal.
\end{definition}

{\color{black}It can be seen that  $(\uniti,\unitj,\unitk)$ is orthonormal. 
	Any orthonormal triple of pure quaternions $(\unitp,\unitq,\unitr)$ can be treated as a three-axis imaginary system like $(\uniti,\unitj,\unitk)$, see Property 1 in \cite{pan2024block}.}

For any $A=\tilde{A}_0+\tilde{A}_1\unitp+\tilde{A}_2\unitq+\tilde{A}_3\unitr\in\mathbb{Q}^{m_1\times m_2}$ with $\tilde{A}_s\in\mathbb{R}^{m_1\times m_2}~(s=0,1,2,3)$, its transpose $A^\transpose$ and its conjugate $\bar{A}$ are defined as 
\begin{align*}
	A^{\transpose}:=\tilde{A}_0^\transpose+\tilde{A}_1^\transpose\unitp+\tilde{A}_2^\transpose\unitq+\tilde{A}_3^\transpose\unitr,\quad \bar{A}:=\tilde{A}_0-\tilde{A}_1\unitp-\tilde{A}_2\unitq-\tilde{A}_3\unitr;
\end{align*}
For any $A\in\mathbb{Q}^{m_1\times m_2}$, the conjugate transpose of $A$, denoted by $A^{*}$,  is  $A^{*}:=\bar{A}^{\rm T}$.
$A$ is said to be Hermitian if and only if $A=A^{*}$. 

\textcolor{black}{For any ${\bf x}=(x_1,x_2,...,x_m)^{\rm T}\in\mathbb{Q}^{m\times 1}$, its vector 2-norm is defined as $||{\bf x}||_2=\sqrt{\sum\limits_{i=1}^{m}|x_i|^2}$. Clearly, $||{\bf x}||_2=\sqrt{{\bf x}^{*}{\bf x}}$ for any quaternion-valued vector ${\bf x}$.}

Denote
\begin{equation*}
	\mathbb{Q}_{(0,1)}:=\{x+y\unitp|x,y\in\mathbb{R}\},\quad \mathbb{Q}_{(2,3)}:=\{x\unitq+y\unitr|x,y\in\mathbb{R}\}.
\end{equation*}
{\color{black} The set of all $m\times n$ matrices whose entries belonging to $\mathbb{Q}_{(0,1)}$ (or $\mathbb{Q}_{(2,3)}$) is denoted as  $ \mathbb{Q}_{(0,1)}^{m\times n}$ (or $\mathbb{Q}_{(2,3)}^{m\times n}$),}  
We remark that $\mathbb{Q}_{(0,1)}$ is a field isomorphic to the complex field. Hence, results on numbers in $\mathbb{C}$ (or matrices in $\mathbb{C}^{m\times n}$) can be applied to the numbers in $\mathbb{Q}_{(0,1)}$ (or matrices in $ \mathbb{Q}_{(0,1)}^{m\times n}$) under isomorphism. 
	For a square matrix $A\in\mathbb{K}^{m\times m}$ 
		($\mathbb{K}=\mathbb{Q}$ or $\mathbb{Q}_{(0,1)}$),
	we denote
	\begin{equation*}
		\sigma(A):=\{\lambda\in\mathbb{K}|A{\mathbf{z}}=\mathbf{ z}\lambda{\rm~for~some~nonzero~vector~}\mathbf{z}\in\mathbb{K}^{m\times 1}\}.
	\end{equation*}
	Here we refer $(\lambda,\mathbf{z})$ be the right eigenvalue and corresponding eigenvector of $A$, and
	$\sigma(A)$ be the set of all right eigenvalues of $A$. 
	
	\begin{definition}\label{qhpddef}
		A Hermitian matrix $A\in\mathbb{Q}^{m\times m}$ is said to be Hermitian positive definite (HPD) if and only if ${\bf x}^{*}A{\bf x}>0$ holds for any nonzero ${\bf x}$. 
	\end{definition}
	
	\begin{lemma}\label{hpdqmatproplm}\textnormal{(see, e.g., \cite{opfer2005conjugate})}
		Let $A\in\mathbb{Q}^{n\times n}$ be Hermitian matrix. Then, we have
		\begin{description}
			\item[(i)] $\mathbf{x}^{*}A\mathbf{x}\in\mathbb{R}$ hold for all $\mathbf{x}\in\mathbb{Q}^{n\times 1}$;
			\item[(ii)]  $\sigma(A)\subset\mathbb{R}$ and the number of distinct right eigenvalues of 
			$A$ is at most $n$.
			If in addition, $A$ is HPD, then $\sigma(A)\subset\mathbb{R}^{+}$. 
		\end{description}
	\end{lemma}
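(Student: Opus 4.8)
The plan is to obtain part~(i) from a one-line conjugation identity and then to bootstrap part~(ii) from (i) together with the $2$-by-$2$ block complex representation of quaternion matrices. For (i), fix $\mathbf{x}\in\mathbb{Q}^{n\times 1}$ and set $s:=\mathbf{x}^{*}A\mathbf{x}\in\mathbb{Q}$. Since $s$ is a single quaternion, its conjugate transpose coincides with its quaternion conjugate, $s^{*}=\bar s$. Using $(BC)^{*}=C^{*}B^{*}$, $(\mathbf{x}^{*})^{*}=\mathbf{x}$ and $A^{*}=A$ I would compute
\[
\bar s = s^{*} = (\mathbf{x}^{*}A\mathbf{x})^{*} = \mathbf{x}^{*}A^{*}\mathbf{x} = \mathbf{x}^{*}A\mathbf{x} = s,
\]
and $\bar s=s$ forces the three imaginary parts of $s$ to vanish, so $s\in\mathbb{R}$.

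For the reality (and positivity) of $\sigma(A)$, let $(\lambda,\mathbf{z})$ be a right eigenpair, i.e.\ $A\mathbf{z}=\mathbf{z}\lambda$ with $\mathbf{z}\neq\mathbf{0}$. Multiplying on the left by $\mathbf{z}^{*}$ gives $\mathbf{z}^{*}A\mathbf{z}=(\mathbf{z}^{*}\mathbf{z})\lambda$. The number $\mathbf{z}^{*}\mathbf{z}=\sum_{i=1}^{n}|z_i|^{2}$ is a strictly positive real scalar, hence lies in the centre of $\mathbb{Q}$, so $\lambda=(\mathbf{z}^{*}\mathbf{z})^{-1}\,\mathbf{z}^{*}A\mathbf{z}$; by part~(i) this is real, whence $\lambda\in\mathbb{R}$ and $\sigma(A)\subset\mathbb{R}$. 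If moreover $A$ is HPD then $\mathbf{z}^{*}A\mathbf{z}>0$, and the same identity forces $\lambda>0$, i.e.\ $\sigma(A)\subset\mathbb{R}^{+}$. Since every right eigenvalue is real and real numbers are central, each such eigenvalue forms a singleton similarity class, so counting right eigenvalues is unambiguous.

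For the counting statement in (ii) I would pass to the complex adjoint. Writing $A=A_1+A_2\unitj$ with $A_1,A_2\in\mathbb{C}^{n\times n}$, set $\chi(A):=\begin{bmatrix}A_1&-A_2\\\overline{A_2}&\overline{A_1}\end{bmatrix}\in\mathbb{C}^{2n\times 2n}$. One checks the standard facts that $\chi$ is an $\mathbb{R}$-algebra homomorphism with $\chi(A^{*})=\chi(A)^{*}$ — so $\chi(A)$ is a Hermitian complex matrix — and that for $\lambda\in\mathbb{R}$ and $\mathbf{z}=\mathbf{z}_1+\mathbf{z}_2\unitj$ the quaternionic equation $A\mathbf{z}=\mathbf{z}\lambda$ is equivalent to $\chi(A)\,\widehat{\mathbf{z}}=\lambda\,\widehat{\mathbf{z}}$, where $\widehat{\mathbf{z}}:=\begin{bmatrix}\mathbf{z}_1\\\overline{\mathbf{z}_2}\end{bmatrix}$. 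Being Hermitian, $\chi(A)$ has exactly $2n$ real eigenvalues counted with multiplicity. The key structural fact is the symplectic symmetry $J\,\overline{\chi(A)}\,J^{-1}=\chi(A)$ with $J:=\begin{bmatrix}0&I_n\\-I_n&0\end{bmatrix}$: it exhibits $\mathbf{w}\mapsto J\overline{\mathbf{w}}$ as a conjugate-linear map commuting with $\chi(A)$ whose square is $-I_{2n}$, so, restricted to each eigenspace of $\chi(A)$, it forces that eigenspace to have even complex dimension; hence every eigenvalue of $\chi(A)$ has even multiplicity. Halving the multiplicities yields a multiset of $n$ real numbers, and the correspondence $\mathbf{z}\leftrightarrow\widehat{\mathbf{z}}$ (together with the partner $\mathbf{z}\unitj\leftrightarrow -J\overline{\widehat{\mathbf{z}}}$) matches it with the list of right eigenvalues of $A$; thus $A$ has exactly $n$ right eigenvalues, which completes (ii).

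I expect the last paragraph to be the main obstacle: one must check carefully that $\mathbf{z}\mapsto\widehat{\mathbf{z}}$ together with its symplectic partner is an honest isomorphism between each $2m$-dimensional complex eigenspace of $\chi(A)$ and an $m$-dimensional (over $\mathbb{Q}$) right-eigenspace of $A$, so that multiplicities truly match after halving — equivalently, that a Hermitian quaternion matrix admits a unitary diagonalization $A=U\diag(\lambda_1,\dots,\lambda_n)U^{*}$ with $U\in\mathbb{Q}^{n\times n}$ unitary and each $\lambda_i\in\mathbb{R}$. Everything else — the identity in (i), the Rayleigh-quotient argument, the algebra of $\chi$, and the $2n$ real eigenvalues of the complex Hermitian matrix $\chi(A)$ — is routine manipulation of quaternion arithmetic together with the standard spectral theorem for complex Hermitian matrices.
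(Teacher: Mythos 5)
Your proposal is correct. Note that the paper does not prove this lemma at all --- it is quoted as a known result with a citation to the literature --- so there is no in-paper argument to compare against; what you have written is a valid self-contained proof. Part (i) (conjugating the scalar $\mathbf{x}^{*}A\mathbf{x}$) and the Rayleigh-quotient argument for $\sigma(A)\subset\mathbb{R}$ (resp.\ $\mathbb{R}^{+}$) are exactly the standard route. For the counting claim, your complex-adjoint map $\chi$ is precisely the paper's own $2$-by-$2$ block representation $\mathcal{M}(\cdot)$ (with $\widehat{\mathbf{z}}=\mathcal{V}(\mathbf{z})$), and the symplectic symmetry $J\,\overline{\chi(A)}\,J^{-1}=\chi(A)$ forcing even multiplicities is the correct mechanism; the verification you flag as the main obstacle --- that $\mathbf{z}\mapsto\widehat{\mathbf{z}}$ is right-$\mathbb{C}$-linear and carries each $\mathbb{Q}$-right-eigenspace of dimension $m$ onto a $2m$-dimensional complex eigenspace of $\chi(A)$, so that halving the $2n$ eigenvalues of the Hermitian matrix $\chi(A)$ gives exactly $n$ right eigenvalues of $A$ --- does go through and yields the unitary diagonalization $A=U\,\mathrm{diag}(\lambda_1,\dots,\lambda_n)\,U^{*}$ that the paper implicitly uses later (e.g.\ in Appendix A).
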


	
	For any $A=A_0+A_1\unitp+A_2\unitq+A_3\unitr\in\mathbb{Q}^{m_1\times m_2}$ with $A_s\in\mathbb{R}^{m_1\times m_2}$ for $s=0,1,2,3$, $A$ can be equivalently expressed as
	\begin{equation*}
		A=(A_0+A_1\unitp)+(A_2+A_3\unitp)\unitq,
	\end{equation*}			
	and such representation is unique. 
	
	\begin{definition}
		For any positive integers, $m_1$ and $m_2$, for any $A=A_0+A_1\unitp+A_2\unitq+A_3\unitr\in\mathbb{Q}^{m_1\times m_2}$ with $A_s\in\mathbb{R}^{m_1\times m_2}$ for $s=0,1,2,3$, define $\phi_l(A)$ $(l=1,2)$ 
		as follows: 
		\begin{equation}\label{phi1phi2def}
			\phi_1(A):=A_0+A_1\unitp,\quad \phi_2(A):=A_2+A_3\unitp.
		\end{equation}
	\end{definition}
	From the above definition, $A=\phi_1(A)+\phi_2(A)\unitq$ holds for any positive integers, $m_1$ and $m_2$ and any $A\in\mathbb{Q}^{m_1\times m_2}$. If $A$ is a scalar,  $\phi_1(A)$ and $\phi_2(A)$ can be defined in a similar way to \eqref{phi1phi2def}. 
	
	With $\phi_1(\cdot)$ and $\phi_2(\cdot)$, we 
	define two mappings $\mathcal{M}(\cdot)$ and $\mathcal{V}(\cdot)$ as follows:
	\begin{align*}
		&\mathcal{M}(\cdot):A\in\mathbb{Q}^{n\times n}\mapsto\matcomplexify{A}:=\matcmplxexpress{A}\in\mathbb{Q}_{(0,1)}^{2n\times 2n};\\
		&\mathcal{V}(\cdot):\mathbf{x}\in\mathbb{Q}^{n\times 1}\mapsto\veccomplexify{\mathbf{x}}:=\veccmplxexpress{\mathbf{x}}\in\mathbb{Q}_{(0,1)}^{2n\times 1}.
	\end{align*}
		Note that if $A \in \mathbb{Q}^{n \times n}$ is Hermitian, then 
		$\mathbf{x}^* A \mathbf{x} = \mathcal{V}(\mathbf{x})^* \mathcal{M}(A) \mathcal{V}(\mathbf{x})$ and 
		$\sigma(A) =\sigma( \mathcal{M}(A))$.
	
	\section{Quaternion Toeplitz Matrices}
	\subsection{Generating Functions}
	
		For a quaternion-valued function  $f$, we define two $\mathbb{Q}_{(0,1)}$-valued functions $\Phi_1[f]$ and $\Phi_2[f]$ as follows:
	\begin{equation*}
		\Phi_1[f](x):=\phi_1(f(x)),\quad \Phi_2[f](x):=\phi_2(f(x)).
	\end{equation*}	 
	With the notations introduced above, we define a pair of functions for
	generating  
	a sequence of quaternion Hermitian Toeplitz matrices.
	
	\begin{definition}\label{quatgfuncdef}
		A quaternion-valued function $f\in L^1([-\pi,\pi])$ is called a generating function for a sequence of quaternion 
		Hermitian Toeplitz matrices $\{T_n\}_{n\in\mathbb{N}^{+}}$ if 
		$$
		t_s =\frac{1}{2\pi}\int_{-\pi}^{\pi}f(x)\exp(-\unitp sx)dx, 
		\quad \bar{t}_s =\frac{1}{2\pi}\int_{-\pi}^{\pi}f(x)\exp(\unitp sx)dx,~s=0,1,..., 
		$$
		where  $\exp(\unitp x):=\cos(x)+\unitp\sin(x),~ x\in\mathbb{R}$.
	\end{definition}
	
	\begin{lemma}\label{gfuncstructurelm}
		Let $f$ be a generating function defined in Definition \ref{quatgfuncdef} for the quaternion Toeplitz Hermitian matrices $\{ T_n \}$. 
		Then,
		\begin{description}
			\item[(i)] $\Phi_1[f]\in L^{1}([-\pi,\pi])$ and $\Phi_1[f](x)\in\mathbb{R}$ a.e. $x\in[-\pi,\pi]$; 
			\item[(ii)] $\Phi_2[f]\in L^{1}([-\pi,\pi])$ and $\Phi_2[f](-x)=-\Phi_2[f](x)$  a.e. $x\in[-\pi,\pi]$.
		\end{description}
	\end{lemma}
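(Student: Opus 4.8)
The plan is to reduce both assertions to the uniqueness theorem for Fourier coefficients of $L^{1}$ functions, after transporting the two defining relations of Definition~\ref{quatgfuncdef} through the maps $\phi_1,\phi_2$.

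The integrability claims are immediate. Writing $f=A_0+A_1\unitp+A_2\unitq+A_3\unitr$ with real-valued $A_s$, the hypothesis $f\in L^{1}$ forces each $A_s\in L^{1}$ since $|A_s|\le|f|$ pointwise; hence $\phi_1(f)=A_0+A_1\unitp$ and $\phi_2(f)=A_2+A_3\unitp$ both lie in $L^{1}([-\pi,\pi])$. As $\phi_1(f)$ and $\phi_2(f)$ take values in the field $\mathbb{Q}_{(0,1)}\cong\mathbb{C}$, I treat them as ordinary complex-valued $L^{1}$ functions and use their scalar Fourier coefficients $\widehat{\phi_\ell(f)}(s)$, $s\in\mathbb{Z}$, defined with the exponential $\exp(-\unitp sx)$.

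The computational heart is how $\phi_1,\phi_2$ behave under right multiplication by $z\in\mathbb{Q}_{(0,1)}$. From the rule $\unitq z=\bar z\,\unitq$ for $z\in\mathbb{Q}_{(0,1)}$ (equivalently, from $\matcomplexify{\cdot}$ being multiplicative with $\matcomplexify{z}=\diag(z,\bar z)$) one checks $\phi_1(qz)=\phi_1(q)z$ and $\phi_2(qz)=\phi_2(q)\bar z$ for all $q\in\mathbb{Q}$. Applying these pointwise with $z=\exp(-\unitp sx)$ and $z=\exp(\unitp sx)$, and using that $\phi_1,\phi_2$ commute with integration (being bounded $\mathbb{R}$-linear maps), the relations $t_s=\frac1{2\pi}\int_{-\pi}^{\pi}f(x)\exp(-\unitp sx)\,dx$ and $\bar t_s=\frac1{2\pi}\int_{-\pi}^{\pi}f(x)\exp(\unitp sx)\,dx$ become, for every $s\ge 0$,
\begin{gather*}
\widehat{\phi_1(f)}(s)=\phi_1(t_s),\qquad \widehat{\phi_1(f)}(-s)=\phi_1(\bar t_s),\\
\widehat{\phi_2(f)}(s)=\phi_2(\bar t_s),\qquad \widehat{\phi_2(f)}(-s)=\phi_2(t_s).
\end{gather*}
The sign/index cross-over in the $\phi_2$ line is exactly the conjugation $\bar z$ produced when a $\mathbb{Q}_{(0,1)}$-scalar is passed across $\unitq$.

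It remains to combine this with the elementary identities $\phi_1(\bar q)=\overline{\phi_1(q)}$ and $\phi_2(\bar q)=-\phi_2(q)$ (conjugation negates the $\unitp,\unitq,\unitr$-parts). For (i): $\widehat{\phi_1(f)}(-s)=\phi_1(\bar t_s)=\overline{\phi_1(t_s)}=\overline{\widehat{\phi_1(f)}(s)}$ for all $s$, so $\phi_1(f)-\overline{\phi_1(f)}$ has all Fourier coefficients zero and vanishes a.e., i.e.\ $\phi_1(f)(x)\in\mathbb{R}$ a.e. For (ii): $\widehat{\phi_2(f)}(s)+\widehat{\phi_2(f)}(-s)=\phi_2(\bar t_s)+\phi_2(t_s)=0$ for all $s$, and since the Fourier coefficients of $x\mapsto\phi_2(f)(-x)$ are $\widehat{\phi_2(f)}(-s)$, the function $x\mapsto\phi_2(f)(x)+\phi_2(f)(-x)$ has all Fourier coefficients zero, hence is zero a.e. The only delicate point is the algebra of the third paragraph — keeping $\exp(\mp\unitp sx)$ on the right and correctly tracking the conjugate that $\phi_2$ introduces; everything else is bookkeeping plus the uniqueness theorem. (Hermitian-ness of the $T_n$ is not needed beyond what is built into Definition~\ref{quatgfuncdef}; taking $s=0$ above recovers $t_0\in\mathbb{R}$.)
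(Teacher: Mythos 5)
Your proposal is correct and follows essentially the same route as the paper: decompose $f=\phi_1(f)+\phi_2(f)\unitq$, use the commutation rule $\unitq z=\bar z\,\unitq$ to translate the defining integrals of $t_s$ and $\bar t_s$ into Fourier-coefficient identities for $\phi_1(f)$ and $\phi_2(f)$, and conclude by uniqueness of Fourier coefficients of $L^1$ functions (which the paper re-derives via density of trigonometric polynomials in $C([-\pi,\pi])$). If anything, your explicit observation that the identities proved for $s\ge 0$ automatically extend to all $s\in\mathbb{Z}$ by the $s\leftrightarrow -s$ symmetry is handled slightly more carefully than in the paper's version.
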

	
	\begin{proof}
		Since $f\in L^1([-\pi,\pi])$ and $|\phi_l(f)(x)|\leq |f(x)|$ for $l=0,1$, it is clear that $\phi_l(f)\in L^1([-\pi,\pi]) $ for $l=0,1$.
		From  Definition \ref{quatgfuncdef}, we see that
		\begin{align}
			t_s	=&\frac{1}{2\pi}\int_{-\pi}^{\pi}f(x)\exp(-\unitp sx)dx\notag\\
			& \hspace{-4mm} =\frac{1}{2\pi}\int_{-\pi}^{\pi}[\Phi_1[f](x)+\Phi_2[f](x)\unitq]\exp(-\unitp sx)dx\notag\\
			& \hspace{-4mm} =\frac{1}{2\pi}\int_{-\pi}^{\pi}\Phi_1[f](x)\exp(-\unitp sx)dx+\left[\frac{1}{2\pi}\int_{-\pi}^{\pi}\Phi_2[f](x)\exp(\unitp sx)dx\right]\unitq, \ s=0,\pm 1,\pm 2,..., \label{tsdecompos}
		\end{align}
		which implies that
		\begin{equation*}
			\phi_1(t_s)=\frac{1}{2\pi}\int_{-\pi}^{\pi}\Phi_1[f](x)\exp(-\unitp sx)dx,\quad s=0,\pm 1,\pm 2,...
		\end{equation*}
		Hence, 
		\begin{equation*}
			\overline{\phi_1(t_s)}=\frac{1}{2\pi}\int_{-\pi}^{\pi}\overline{\Phi_1[f](x)}\exp(\unitp sx)dx.
		\end{equation*}
		On the other hand, Definition \ref{quatgfuncdef} and \eqref{tsdecompos} imply that
		\begin{align*}
			\phi_1(\bar{t}_s)&=\phi_1\left(\frac{1}{2\pi}\int_{-\pi}^{\pi}f(x)\exp(-\unitp (-s)x)dx\right)\\
			&=\frac{1}{2\pi}\int_{-\pi}^{\pi}\Phi_1[f](x)\exp(\unitp sx)dx,\quad \textcolor{black}{s=0,\pm 1,\pm 2,....}
		\end{align*}
		By the simple fact that $ \phi_1(\bar{t}_s)= \overline{\phi_1(t_s)}$, we have 
		\begin{equation*}
			\frac{1}{2\pi}\int_{-\pi}^{\pi}\overline{\Phi_1[f](x)}\exp(\unitp sx)dx=\frac{1}{2\pi}\int_{-\pi}^{\pi}\Phi_1[f](x)\exp(\unitp sx)dx,\quad s=0,\pm 1,\pm 2,...
		\end{equation*}
		or equivalently
		\begin{equation*}
			\frac{1}{2\pi}\int_{-\pi}^{\pi}\left[\overline{\Phi_1[f](x)}-\Phi_1[f](x)\right]\exp(\unitp sx)dx=0,\quad s=0,\pm 1,\pm 2,...
		\end{equation*}
		Note that the set of polynomials $$P([-\pi,\pi]):=\left\{\sum\limits_{s=-\ell}^{\ell}a_s\exp(\unitp sx)\Big|a_s\in\mathbb{Q}_{(0,1)},s=
		-\ell,...,-1,0,1,...,\ell,\quad \ell\in\mathbb{N}\right\}$$ is dense in $C([-\pi,\pi])$, where $C([-\pi,\pi])$ denotes the set of all $\mathbb{Q}_{(0,1)}$-valued continuous functions defined on $[-\pi,\pi]$. Hence,
		\begin{equation*}
			\int_{-\pi}^{\pi}\left[\overline{\Phi_1[f](x)}-\Phi_1[f](x)\right]h(x)dx=0,\quad \forall h\in C([-\pi,\pi]).
		\end{equation*}
		That means $\overline{\Phi_1[f](x)}-\Phi_1[f](x)=0$ ~ a.e. $x\in[-\pi,\pi]$. In other words, $\Phi_1[f](x)\in\mathbb{R}$ a.e. $x\in[-\pi,\pi]$. The proof of \bfrefnumber{i} is complete.
		
		On the other hand, \textcolor{black}{according to  \eqref{tsdecompos} and Definition \ref{quatgfuncdef}}, we know that
			\begin{align*}
				\phi_2(t_s)&=\frac{1}{2\pi}\int_{-\pi}^{\pi}\Phi_2[f](x)\exp(\unitp sx)dx,\quad s=0,1,...\\
				\phi_2(\bar{t}_s)&=\phi_2\left(\frac{1}{2\pi}\int_{-\pi}^{\pi}f(x)\exp(\textcolor{black}{\unitp sx})dx\right)\\
				&=\frac{1}{2\pi}\int_{-\pi}^{\pi}\Phi_2[f](x)\exp(-\unitp sx)dx= -\phi_2(t_s),\quad s=0,1,...\\
			\end{align*}
		Therefore, we have 
		\begin{equation*}
			\int_{-\pi}^{\pi}-\Phi_2[f](x)\exp(\unitp sx)dx=\int_{-\pi}^{\pi}\Phi_2[f](x)\exp(-\unitp sx)dx,\quad s\in\mathbb{N}.
		\end{equation*}
		In other words,
		\begin{equation*}
			\int_{-\pi}^{\pi}\Phi_2[f](x)[\exp(-\unitp sx)+\exp(\unitp sx)]dx=0,\quad s\in\mathbb{N},
		\end{equation*}
		based on which one can show that $\Phi_2[f](-x)=-\Phi_2[f](x)$ a.e. $x\in[-\pi,\pi]$. The proof is complete.
	\end{proof}
	
	According to Lemma 2.2, we remark that 
	a sequence of Hermitian quaternion Toeplitz matrices is 
	generated by a quaternion-valued function which is the sum 
	of a real-valued function and an odd function with imaginary component. 
	This setting is different from the case of Hermitian complex Toeplitz matrices generated by real-valued functions only. 
	
	For any vector $\mathbf{y}\in\mathbb{Q}^{n\times 1}$, denote 
	\begin{equation*}
		\vectocpoly{\mathbf{ y}}{x}:=\sum\limits_{s=1}^{n}\mathbf{y}(s)\exp(\unitp s x),\quad x \in[-\pi,\pi].
	\end{equation*}
	Next we characterize the eigenvalues of quaternion Hermitian Toeplitz matrices. 
	
	\begin{theorem}\label{qhermitmatrayleightogfuncthm}
		Let $f$ be a generating function defined in Definition \ref{quatgfuncdef} for the quaternion Toeplitz Hermitian matrices $\{ T_n \}$. 
		\begin{description}
			\item[(i)] For any $\mathbf{ y}\in\mathbb{Q}^{n\times 1}$, it holds that
			\begin{equation*}
				\mathbf{ y}^{*}T_n\mathbf{ y}=\frac{1}{2\pi}\int_{-\pi}^{\pi}[\veccomplexify{\vectocpoly{\mathbf{ y}}{x}}]^{*}G[f](x)\veccomplexify{\vectocpoly{\mathbf{ y}}{x}}dx,
			\end{equation*}
			where 
			\begin{align*}
				&\textcolor{black}{G[f](x):=\left[\begin{array}[c]{cc}
					\Phi_1[f](x)& \Phi_2[f](x)\\
					&\\
					\overline{\Phi_2[f](x)}&\Phi_1[f](-x)
				\end{array}\right]\in\mathbb{Q}_{(0,1)}^{2\times 2},\quad x\in[-\pi,\pi].}
			\end{align*}
			It is clear that $G[f](x)$ is Hermitian for each $x$.
			\item[(ii)] Let
			\begin{small}
				\begin{align*}
					&\check{f}(x):=\frac{1}{2}\left[\Phi_1[f](x)+\Phi_1[f](-x)-\sqrt{|\Phi_1[f](x)-\Phi_1[f](-x)|^2+4|\Phi_2[f](x)|^2}\right],\\
					&\hat{f}(x):=\frac{1}{2}\left[\Phi_1[f](x)+\Phi_1[f](-x)+\sqrt{|\Phi_1[f](x)-\Phi_1[f](-x)|^2+4|\Phi_2[f](x)|^2}\right].
				\end{align*}
			\end{small}			
			Then, we have 
			\begin{align*}
				&\essinf\limits_{x\in[-\pi,\pi]}\check{f}(x)=\essinf\limits_{x\in[-\pi,\pi]}\lambda_{\min} (G[f](x))\leq \lambda_{\min}(T_n),\\
				& \lambda_{\max}(T_n)\leq \esssup\limits_{x\in[-\pi,\pi]}\lambda_{\max} (G[f](x))=\esssup\limits_{x\in[-\pi,\pi]}\hat{f}(x).
			\end{align*}
			\item[(iii)] If there is no constant function equal to $\check{f}$ $(\hat{f},~{\rm respectively} )$  a.e. on 
			$[-\pi,\pi]$, then
			\begin{equation*}
				\essinf\limits_{x\in[-\pi,\pi]}\check{f}(x)<\lambda_{\min}(T_n)
				\quad {\rm and} \quad \esssup\limits_{x\in[-\pi,\pi]}\hat{f}(x)>\lambda_{\max}(T_n).
			\end{equation*}
		\end{description}
	\end{theorem}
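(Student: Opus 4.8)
The plan is to prove part~\bfrefnumber{i} first, since it carries essentially all the analytic content, and then to deduce parts~\bfrefnumber{ii} and~\bfrefnumber{iii} by Grenander--Szeg\"{o} type arguments transplanted to the $2$-by-$2$ block complex representation. For part~\bfrefnumber{i} I would start from the identity $\mathbf{y}^{*}T_n\mathbf{y}=\veccomplexify{\mathbf{y}}^{*}\matcomplexify{T_n}\veccomplexify{\mathbf{y}}$ recorded just before the statement, and from the block form of $\matcomplexify{T_n}$, whose top-left block is the $\mathbb{Q}_{(0,1)}$-valued Toeplitz matrix $\phi_1(T_n)$ with entries $\phi_1(t_{j-k})$ and whose bottom-left block is $\overline{\phi_2(T_n)}$, with $\phi_2(T_n)$ the Toeplitz matrix with entries $\phi_2(t_{j-k})$. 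The key inputs are the two integral representations
\[
\phi_1(t_s)=\frac{1}{2\pi}\int_{-\pi}^{\pi}\phi_1(f)(x)\exp(-\unitp sx)\,dx,\qquad \phi_2(t_s)=\frac{1}{2\pi}\int_{-\pi}^{\pi}\phi_2(f)(x)\exp(\unitp sx)\,dx,
\]
which were already derived inside the proof of Lemma~\ref{gfuncstructurelm} and which hold for every $s\in\mathbb{Z}$ (the case $s<0$ following from $t_{-s}=\overline{t_s}$ together with $\phi_1(\overline{t_s})=\overline{\phi_1(t_s)}$ and $\phi_2(\overline{t_s})=-\phi_2(t_s)$). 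Expanding $\veccomplexify{\mathbf{y}}^{*}\matcomplexify{T_n}\veccomplexify{\mathbf{y}}$ into its four block contributions and substituting these representations turns each block into a scalar integral of $\phi_1(f)$ or $\phi_2(f)$ against a product of two trigonometric polynomials in $\exp(\unitp x)$; writing $p(x)$ and $q(x)$ for the two components of $\veccomplexify{\vectocpoly{\mathbf{y}}{x}}$, one verifies via $\phi_1(\mathbf{y}(s)\exp(\unitp sx))=\phi_1(\mathbf{y}(s))\exp(\unitp sx)$ and $\phi_2(\mathbf{y}(s)\exp(\unitp sx))=\phi_2(\mathbf{y}(s))\exp(-\unitp sx)$---both consequences of $\unitq z=\overline{z}\,\unitq$ for $z\in\mathbb{Q}_{(0,1)}$---that these polynomials are exactly $p(x)$ and $q(x)$, up to the change of variable $x\mapsto-x$. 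The two diagonal blocks then give $\frac{1}{2\pi}\int\phi_1(f)(x)|p(x)|^2\,dx$ and $\frac{1}{2\pi}\int\phi_1(f)(-x)|q(x)|^2\,dx$, while the two off-diagonal blocks, combined using $x\mapsto-x$ together with the oddness $\phi_2(f)(-x)=-\phi_2(f)(x)$ of Lemma~\ref{gfuncstructurelm}\,\bfrefnumber{ii}, reassemble into the two cross terms built from the off-diagonal entries of $G[f](x)$. Collecting the four pieces gives precisely $\frac{1}{2\pi}\int_{-\pi}^{\pi}[\veccomplexify{\vectocpoly{\mathbf{y}}{x}}]^{*}G[f](x)\veccomplexify{\vectocpoly{\mathbf{y}}{x}}\,dx$, and $G[f](x)$ is Hermitian because its diagonal entries $\phi_1(f)(\pm x)$ are real a.e.\ (Lemma~\ref{gfuncstructurelm}\,\bfrefnumber{i}) and its off-diagonal entries are conjugate. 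I expect the bookkeeping in this step---tracking on which side of a product $\unitq$ sits, and aligning the two changes of variable so that $p(x)$ and $q(x)$ emerge with the correct arguments---to be the only real obstacle. This is exactly where the structural lemma is indispensable, and where the quaternion case differs from the complex one.

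For part~\bfrefnumber{ii} I would combine part~\bfrefnumber{i} with two standard facts. First, since $T_n$ is Hermitian, $\lambda_{\min}(T_n)\,\mathbf{y}^{*}\mathbf{y}\le\mathbf{y}^{*}T_n\mathbf{y}\le\lambda_{\max}(T_n)\,\mathbf{y}^{*}\mathbf{y}$ for all $\mathbf{y}\in\mathbb{Q}^{n\times1}$, with both bounds attained; this follows from $\sigma(T_n)=\sigma(\matcomplexify{T_n})$, the identity $\mathbf{y}^{*}T_n\mathbf{y}=\veccomplexify{\mathbf{y}}^{*}\matcomplexify{T_n}\veccomplexify{\mathbf{y}}$, the norm identity $\mathbf{y}^{*}\mathbf{y}=\veccomplexify{\mathbf{y}}^{*}\veccomplexify{\mathbf{y}}$, the bijectivity of $\mathcal{V}$ onto $\mathbb{Q}_{(0,1)}^{2n\times1}$, and Courant--Fischer for the Hermitian matrix $\matcomplexify{T_n}$ over $\mathbb{Q}_{(0,1)}\cong\mathbb{C}$. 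Second, Parseval on $[-\pi,\pi]$ gives $\frac{1}{2\pi}\int_{-\pi}^{\pi}\|\veccomplexify{\vectocpoly{\mathbf{y}}{x}}\|_2^2\,dx=\mathbf{y}^{*}\mathbf{y}$. Using the pointwise estimate $[\veccomplexify{\vectocpoly{\mathbf{y}}{x}}]^{*}G[f](x)\veccomplexify{\vectocpoly{\mathbf{y}}{x}}\le\lambda_{\max}(G[f](x))\,\|\veccomplexify{\vectocpoly{\mathbf{y}}{x}}\|_2^2$, integrating, and dividing by $\mathbf{y}^{*}\mathbf{y}$, one obtains $\mathbf{y}^{*}T_n\mathbf{y}/(\mathbf{y}^{*}\mathbf{y})\le\esssup_{x}\lambda_{\max}(G[f](x))$, hence $\lambda_{\max}(T_n)\le\esssup_{x}\lambda_{\max}(G[f](x))$; the lower bound on $\lambda_{\min}(T_n)$ is symmetric. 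Finally, diagonalizing the $2$-by-$2$ Hermitian matrix $G[f](x)$ by the quadratic formula yields $\lambda_{\min}(G[f](x))=\check{f}(x)$ and $\lambda_{\max}(G[f](x))=\hat{f}(x)$, which supply the two asserted equalities.

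For part~\bfrefnumber{iii} I would argue by contradiction for $\hat{f}$, the case of $\check{f}$ being symmetric. If $\esssup_{x}\hat{f}(x)=+\infty$ there is nothing to prove, so suppose it equals a finite $M$ and that $\lambda_{\max}(T_n)=M$; pick a unit eigenvector $\mathbf{y}$ of $T_n$ associated to $\lambda_{\max}(T_n)$. By part~\bfrefnumber{i} and the Parseval identity,
\[
M=\mathbf{y}^{*}T_n\mathbf{y}=\frac{1}{2\pi}\int_{-\pi}^{\pi}[\veccomplexify{\vectocpoly{\mathbf{y}}{x}}]^{*}G[f](x)\veccomplexify{\vectocpoly{\mathbf{y}}{x}}\,dx\le\frac{1}{2\pi}\int_{-\pi}^{\pi}\hat{f}(x)\,\|\veccomplexify{\vectocpoly{\mathbf{y}}{x}}\|_2^2\,dx\le M,
\]
so both inequalities are equalities; in particular $\int_{-\pi}^{\pi}\bigl(M-\hat{f}(x)\bigr)\,\|\veccomplexify{\vectocpoly{\mathbf{y}}{x}}\|_2^2\,dx=0$ with a nonnegative integrand, which forces $\hat{f}(x)=M$ for a.e.\ $x$ with $\veccomplexify{\vectocpoly{\mathbf{y}}{x}}\ne\mathbf{0}$. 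But $\mathbf{y}\ne\mathbf{0}$, so the two components of $\veccomplexify{\vectocpoly{\mathbf{y}}{x}}$ are trigonometric polynomials in $\exp(\unitp x)$ of degree at most $n$ that do not both vanish identically, whence $\veccomplexify{\vectocpoly{\mathbf{y}}{x}}\ne\mathbf{0}$ off a finite set; therefore $\hat{f}=M$ a.e., contradicting the hypothesis. Hence $\esssup_{x}\hat{f}(x)>\lambda_{\max}(T_n)$, and symmetrically $\essinf_{x}\check{f}(x)<\lambda_{\min}(T_n)$.
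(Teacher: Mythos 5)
Your proposal is correct and follows essentially the same route as the paper's own proof: part~\bfrefnumber{i} by expanding $\veccomplexify{\mathbf{y}}^{*}\matcomplexify{T_n}\veccomplexify{\mathbf{y}}$ into four block terms and substituting the integral representations of $\phi_1(t_s)$ and $\phi_2(t_s)$ (with the oddness of $\phi_2(f)$ from Lemma~\ref{gfuncstructurelm} making the representation uniform in the index), part~\bfrefnumber{ii} via Courant--Fischer on $\matcomplexify{T_n}$ together with the Parseval identity and the explicit eigenvalues $\check{f},\hat{f}$ of the $2$-by-$2$ Hermitian matrix $G[f](x)$, and part~\bfrefnumber{iii} by the same equality-case/positive-measure contradiction using that the components of $\veccomplexify{\vectocpoly{\mathbf{y}}{x}}$ are trigonometric polynomials vanishing only on a finite set. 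The only differences are cosmetic (you treat $\hat{f}$ first in \bfrefnumber{iii} where the paper treats $\check{f}$).
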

	
	\begin{proof}
		For {\bf (i)}, we first note that
		\begin{equation*}
			\mathbf{y}^{*}T_n\mathbf{y}=\veccomplexify{\mathbf{y}}^{*}\matcomplexify{T_n}\veccomplexify{\mathbf{y}}.
		\end{equation*}
		Next we derive 
		\begin{align}
			\mathbf{ y}^{*}T_n\mathbf{ y} & =\underbrace{[\phi_1(\mathbf{ y})]^{*}\phi_1(T_n)\phi_1
				(\mathbf{ y})}_{:=term \ 1}+\underbrace{\left(\overline{\phi_2(\mathbf{ y})}\right)^{*}\overline{\phi_2(T_n)}\phi_1(\mathbf{ y})}_{:=term \ 2}\notag
			\underbrace{-[\phi_1(\mathbf{ y})]^{*}\phi_2(T_n)\overline{\phi_2(\mathbf{ y})}}_{:=term \ 3}+ \\
			& \quad \underbrace{\left(\overline{\phi_2(\mathbf{ y})}\right)^{*}\overline{\phi_1(T_n)}~\overline{\phi_2(\mathbf{ y})}}_{:=term \ 4}.\label{tnfiledvalueeq1}
		\end{align}
		Here $\phi_1(T_n)$ is a Toeplitz matrix with
		\begin{equation*}
			[\phi_1(T_n)](s,\ell)=\begin{cases}
				\phi_1(t_{s-\ell}),\quad s-\ell\geq 0,\\
				\phi_1(\bar{t}_{\ell-s}),\quad s-\ell<0.
			\end{cases}
		\end{equation*} 
			By Definition \ref{quatgfuncdef}, we have 
			\begin{equation*}
				[\phi_1(T_n)](s,\ell)=\begin{cases}
					\phi_1(t_{s-\ell})=\frac{1}{2\pi}
					{\displaystyle \int_{-\pi}^{\pi} } \Phi_1[f](x)\exp(-\unitp (s-\ell)x)dx,\quad s-\ell\geq 0,\\
					\phi_1(\bar{t}_{\ell-s})=\frac{1}{2\pi}
					{\displaystyle \int_{-\pi}^{\pi} } 
					\Phi_1[f](x)\exp(-\unitp (s-\ell)x)dx,\quad s-\ell<0.
				\end{cases}
			\end{equation*} 
			Then, Lemma \ref{gfuncstructurelm} \bfrefnumber{i} implies that
			\begin{align*}
				term \ 1&=\sum\limits_{s=1}^{n}\sum\limits_{\ell=1}^{n}\overline{[\phi_1(\mathbf{ y})](s)}[\phi_1(T_n)](s,\ell)[\phi_1(\mathbf{ y})](\ell)\\
				&=\sum\limits_{s=1}^{n}\sum\limits_{\ell=1}^{n}\overline{[\phi_1(\mathbf{ y})](s)}\frac{1}{2\pi} {\displaystyle \int_{-\pi}^{\pi} }\Phi_1[f](x)\exp(-\unitp (s-\ell)x)dx[\phi_1(\mathbf{ y})](\ell)\\
				&=\frac{1}{2\pi}
				{\displaystyle \int_{-\pi}^{\pi}} 
				\sum\limits_{s=1}^{n}\overline{[\phi_1(\mathbf{y})](s)}\exp(-\unitp sx)\Phi_1[f](x)\sum\limits_{\ell=1}^{n}[\phi_1(\mathbf{y})](\ell)\exp(\unitp \ell x)dx\\
				&=\frac{1}{2\pi}
				{\displaystyle \int_{-\pi}^{\pi}} \overline{\vectocpoly{\phi_1(\mathbf{y})}{x}}\Phi_1[f](x)\vectocpoly{\phi_1(\mathbf{y})}{x}dx\\
				&=\frac{1}{2\pi}
				{\displaystyle \int_{-\pi}^{\pi}} 
				\overline{\phi_1\left(\vectocpoly{\mathbf{y}}{x}\right)}\Phi_1[f](x)\phi_1\left(\vectocpoly{\mathbf{y}}{x}\right)dx
			\end{align*}
			Similarly, by Definition \ref{quatgfuncdef} and Lemma \ref{gfuncstructurelm},  
			one can show that
			\begin{align*}
				&term \ 2=\frac{1}{2\pi}\int_{-\pi}^{\pi}\phi_2\left(\vectocpoly{\mathbf{y}}{x}\right)\overline{\Phi_2[f](x)}\phi_1\left(\vectocpoly{\mathbf{y}}{x}\right)dx,\\
				&term \ 3=\frac{1}{2\pi}\int_{-\pi}^{\pi}\overline{\phi_1\left(\vectocpoly{\mathbf{y}}{x}\right)}\Phi_2[f](x)\overline{\phi_2\left(\vectocpoly{\mathbf{y}}{x}\right)}dx,\\
				&term \ 4=\frac{1}{2\pi}\int_{-\pi}^{\pi}\phi_2\left(\vectocpoly{\mathbf{y}}{x}\right)\Phi_1[f](-x)\overline{\phi_2\left(\vectocpoly{\mathbf{y}}{x}\right)}dx.
			\end{align*}
			By combing the results, we have 
			\begin{eqnarray*}
				\mathbf{y}^{*}T_n\mathbf{y} & =&term \ 1+ term \ 2+ term \ 3+ term \ 4 \\
				&=&\frac{1}{2\pi}\int_{-\pi}^{\pi}\overline{\phi_1\left(\vectocpoly{\mathbf{y}}{x}\right)}\Phi_1[f](x)\phi_1\left(\vectocpoly{\mathbf{y}}{x}\right)dx+\frac{1}{2\pi}\int_{-\pi}^{\pi}\phi_2\left(\vectocpoly{\mathbf{y}}{x}\right)\overline{\Phi_2[f](x)}\phi_1\left(\vectocpoly{\mathbf{y}}{x}\right)dx+\\
				&			&\frac{1}{2\pi}\int_{-\pi}^{\pi}\overline{\phi_1\left(\vectocpoly{\mathbf{y}}{x}\right)}\Phi_2[f](x)\overline{\phi_2\left(\vectocpoly{\mathbf{y}}{x}\right)}dx+\frac{1}{2\pi}\int_{-\pi}^{\pi}\phi_2\left(\vectocpoly{\mathbf{y}}{x}\right)\Phi_1[f](-x)\overline{\phi_2\left(\vectocpoly{\mathbf{y}}{x}\right)}dx\\
				&			=&\frac{1}{2\pi}\int_{-\pi}^{\pi}\left[\overline{\phi_1\left(\vectocpoly{\mathbf{y}}{x}\right)},\phi_2\left(\vectocpoly{\mathbf{y}}{x}\right)\right]\left[\begin{array}[c]{cc}
					\Phi_1[f](x)&\Phi_2[f](x)\\
					&\\
					\overline{\Phi_2[f](x)}&\Phi_1[f](-x)
				\end{array}\right]\left[\begin{array}[c]{c}
					\phi_1\left(\vectocpoly{\mathbf{y}}{x}\right)\\
					\\
					\overline{\phi_2\left(\vectocpoly{\mathbf{ y}}{x}\right)}
				\end{array}\right]dx\\
				&	=&\frac{1}{2\pi}\int_{-\pi}^{\pi}[\veccomplexify{\vectocpoly{\mathbf{y}}{x}}]^{*}G[f](x)\veccomplexify{\vectocpoly{\mathbf{y}}{x}}dx.
			\end{eqnarray*}
			
			For \bfrefnumber{ii}: 
			{\color{black}
				given $\mathbf{z}= 
				\left ( 
				\begin{array}{c}
					{ z}_1 \\
					{ z}_2 \\
				\end{array}
				\right )$ 
				with ${ z}_1,{ z}_2\in\mathbb{Q}_{(0,1)}^{n\times 1}$, the inverse operation of $\veccomplexify{\cdot}$ is
				\begin{equation*}
					\invveccomplexify{\mathbf{ z}}={ z}_1+\overline{{ z}_2}\unitq.
			\end{equation*}}		
			It follows from the well-known Courant–Fischer Theorem that
			\begin{align*}
				& \lambda_{\min}(\matcomplexify{T_n})\\
				&=\min\limits_{\mathbf{ z}\in\mathbb{Q}_{(0,1)}^{2n\times 1},|| \mathbf{z}||_2=1}\mathbf{ z}^{*}\matcomplexify{T_n}\mathbf{ z}\\
				&=\min\limits_{\mathbf{z}\in\mathbb{Q}_{(0,1)}^{2n\times 1},||\mathbf{z}||_2=1}[\invveccomplexify{\mathbf{z}}]^{*}T_n\invveccomplexify{\mathbf{z}}\\
				&=\min\limits_{\mathbf{z}\in\mathbb{Q}_{(0,1)}^{2n\times 1},||\mathbf{z}||_2=1}\frac{1}{2\pi}\int_{-\pi}^{\pi}[\veccomplexify{\vectocpoly{\invveccomplexify{\mathbf{z}}}{x}}]^{*}G[f](x)\veccomplexify{\vectocpoly{\invveccomplexify{\mathbf{z}}}{x}}dx.
			\end{align*}
			Since $G[f](x)$ is a 2-by-2 Hermitian matrix for each $x\in[-\pi,\pi]$,
			it is straightforward to calculate 
			\begin{equation*}
				\lambda_{\min}(G[f](x))=\check{f}(x),\quad {\rm a.e.~}x\in[-\pi,\pi].
			\end{equation*}
			By using Courant–Fischer Theorem, we see that for $x$ a.e. in $[-\pi,\pi]$, it holds
			\begin{align*}
				[\veccomplexify{\vectocpoly{\invveccomplexify{\mathbf{z}}}{x}}]^{*}G[f](x)\veccomplexify{\vectocpoly{\invveccomplexify{\mathbf{z}}}{x}}&\geq \check{f}(x)\textcolor{black}{\left|\left|\veccomplexify{\vectocpoly{\invveccomplexify{\mathbf{z}}}{x}}\right|\right|_2^2}.
			\end{align*}
			Therefore, we obtain
			\begin{align*}
				\lambda_{\min}(\matcomplexify{T_n})&\geq\min\limits_{\mathbf{z}\in\mathbb{Q}_{(0,1)}^{2n\times 1},||\mathbf{z}||_2=1}\frac{1}{2\pi}\int_{-\pi}^{\pi}\check{f}(x)\textcolor{black}{\left|\left|\veccomplexify{\vectocpoly{\invveccomplexify{\mathbf{z}}}{x}}\right|\right|_2^2}dx\\
				&\geq \essinf\limits_{x\in[-\pi,\pi]}\check{f}(x)\left( \min\limits_{\mathbf{z}\in\mathbb{Q}_{(0,1)}^{2n\times 1},||\mathbf{z}||_2=1}\frac{1}{2\pi}\int_{-\pi}^{\pi}\textcolor{black}{\left|\left|\veccomplexify{\vectocpoly{\invveccomplexify{\mathbf{z}}}{x}}\right|\right|_2^2}dx\right).
			\end{align*}
			On the other hand, we note that for $\mathbf{z}\in\mathbb{Q}_{(0,1)}^{2n\times 1}$, it holds that
			\begin{align*}
				&\frac{1}{2\pi}\int_{-\pi}^{\pi}\textcolor{black}{\left|\left|\veccomplexify{\vectocpoly{\invveccomplexify{\mathbf{z}}}{x}}\right|\right|_2^2}dx \\
				=&\frac{1}{2\pi}\int_{-\pi}^{\pi}\left|\vectocpoly{\invveccomplexify{\mathbf{z}}}{x}\right|^2dx\\
				=&\frac{1}{2\pi}\int_{-\pi}^{\pi}\sum\limits_{s=1}^{n}\sum\limits_{\ell=1}^{n}\overline{[\invveccomplexify{\mathbf{z}}](s)}[\invveccomplexify{\mathbf{z}}](\ell)\exp(\unitp (\ell-s)x) dx\\
				=&\sum\limits_{s=1}^{n}\left|[\invveccomplexify{\mathbf{z}}](s)\right|^2=||\invveccomplexify{\mathbf{z}}||_2^2=||{ z}_1||_2^2+||{ z}_2||_2^2=||\mathbf{z}||_2^2.
			\end{align*}
			Therefore, we have 
			\begin{equation*}
				\lambda_{\min}(\matcomplexify{T_n})\geq  \essinf\limits_{x\in[-\pi,\pi]}\check{f}(x)=\essinf\limits_{x\in[-\pi,\pi]}\lambda_{\min}(G[f](x)).
			\end{equation*}
			Similarly, one can show that
			\begin{equation*}
				\lambda_{\max}(\matcomplexify{T_n})\leq  \esssup\limits_{x\in[-\pi,\pi]}\hat{f}(x)=\esssup\limits_{x\in[-\pi,\pi]}\lambda_{\max}(G[f](x)).
			\end{equation*}
			
				For \bfrefnumber{iii}, let $\mu_0=\lambda_{\min}(T_n)$. Then, there exists a nonzero vector $\mathbf{z}_0\in\mathbb{Q}^{n\times 1}$ such that
				$T_n\mathbf{z}_0=\mu_0\mathbf{z}_0$. Thus, $\mathbf{z}_0^{*}T_n\mathbf{z}_0=\mu_0\mathbf{z}_0^{*}\mathbf{z}_0$. 
				We find that 
				\begin{eqnarray*}
					& &\mathbf{z}_0^{*}T_n\mathbf{z}_0-\mu_0\mathbf{z}_0^{*}\mathbf{z}_0 \\
					&=&\frac{1}{2\pi}\int_{-\pi}^{\pi}[\veccomplexify{\vectocpoly{\mathbf{z}_0}{x}}]^{*}G[f](x)\veccomplexify{\vectocpoly{\mathbf{z}_0}{x}}dx- 
					\frac{\mu_0}{2\pi}\int_{-\pi}^{\pi}[\veccomplexify{\vectocpoly{\mathbf{z}_0}{x}}]^{*}\veccomplexify{\vectocpoly{\mathbf{z}_0}{x}}dx\\
					&\geq&\frac{1}{2\pi}\int_{-\pi}^{\pi}[\veccomplexify{\vectocpoly{\mathbf{z}_0}{x}}]^{*}\check{f}(x)\veccomplexify{\vectocpoly{\mathbf{z}_0}{x}}dx-\frac{\mu_0}{2\pi}\int_{-\pi}^{\pi}[\veccomplexify{\vectocpoly{\mathbf{z}_0}{x}}]^{*}\veccomplexify{\vectocpoly{\mathbf{z}_0}{x}}dx\\
					&=&\frac{1}{2\pi}\int_{-\pi}^{\pi}[\veccomplexify{\vectocpoly{\mathbf{z}_0}{x}}]^{*}[\check{f}(x)-\mu_0]\veccomplexify{\vectocpoly{\mathbf{z}_0}{x}}dx\\
					&=&\frac{1}{2\pi}\int_{-\pi}^{\pi}\textcolor{black}{||\veccomplexify{\vectocpoly{\mathbf{z}_0}{x}}||_2^2}[\check{f}(x)-\mu_0]dx.
				\end{eqnarray*}
				By \bfrefnumber{ii}, we have already known that $\mu_0\geq \essinf\limits_{x\in[-\pi,\pi]}\check{f}(x)$. We show $\mu_0>\essinf\limits_{x\in[-\pi,\pi]}\check{f}(x)$ by contradiction. Suppose $\mu_0=\essinf\limits_{x\in[-\pi,\pi]}\check{f}(x)$. Since  there is no constant function equal to $\check{f}$  a.e. on $[-\pi,\pi]$, the measure of the set $\Omega_0:=\{x\in[-\pi,\pi]|\check{f}(x)>\mu_0\}$ is larger than zero. \textcolor{black}{Moreover, since $||\veccomplexify{\vectocpoly{\mathbf{ z}_0}{x}}||_2^2$ is a nonzero nonnegative trigonometric polynomial,  the measure of the set $\Omega_1:=\{x\in\Omega_0|||\veccomplexify{\vectocpoly{\mathbf{ z}_0}{x}}||_2^2>0\}$ is larger than zero.} Then,
				\begin{align*}
					\frac{1}{2\pi}\int_{-\pi}^{\pi}|\veccomplexify{\vectocpoly{\mathbf{z}_0}{x}}|^2[\check{f}(x)-\mu_0]dx&=\frac{1}{2\pi}\int_{\Omega_0}|\veccomplexify{\vectocpoly{\mathbf{z}_0}{x}}|^2[\check{f}(x)-\mu_0]dx\\
					&\geq \frac{1}{2\pi}\int_{\Omega_1}|\veccomplexify{\vectocpoly{\mathbf{z}_0}{x}}|^2[\check{f}(x)-\mu_0]dx>0,
				\end{align*}
				which contradicts with the fact that $0\geq \frac{1}{2\pi}\int_{-\pi}^{\pi}|\veccomplexify{\vectocpoly{\mathbf{z}_0}{x}}|^2[\check{f}(x)-\mu_0]dx$. Hence, the assumption $\mu_0=\essinf\limits_{x\in[-\pi,\pi]}\check{f}(x)$ is not valid. That means $\mu_0>\essinf\limits_{x\in[-\pi,\pi]}\check{f}(x)$ is true. 
				Similarly, if there is no constant function equal to $\hat{f}$  a.e. on $[-\pi,\pi]$, one can show that $\esssup\limits_{x\in[-\pi,\pi]}\hat{f}(x)>\lambda_{\max}(T_n)$.
				The proof is complete.
			\end{proof}
			
			\begin{remark}\label{blocktoepremk}
				In the above theorem, we relate the value of ${\bf y}^{*}T_n{\bf y}$ to an integration of the matrix-valued symbol $G[f]$ (See Theorem \ref{qhermitmatrayleightogfuncthm}\bfrefnumber{i}). This result is quite different from the counterpart of complex Hermitian Toeplitz matrix because $f$ is quaternion-valued. 
				Indeed, it is well-known that if $\{T_n\}_{n\in\mathbb{N}^{+}}$ is a sequence of  complex Hermitian Toeplitz matrices, then the integration in Theorem \ref{qhermitmatrayleightogfuncthm}\bfrefnumber{i} would be replaced by
				\begin{equation*}
					\frac{1}{2\pi}\int_{-\pi}^{\pi}|\mathcal{F}({\bf y},x)|^2f(x)dx,
				\end{equation*}
				with real valued $f$. 
				We would like to state that 
				Theorem \ref{qhermitmatrayleightogfuncthm}\bfrefnumber{ii}-\bfrefnumber{iii} can be derived from some classical theories for block Toeplitz matrix in the literature (see,e.g., \cite{serra1999spectral}), since $G[f]$ can be seen as a symbol for a sequence of block Toeplitz matrices with $2\times 2$ blocks.
			\end{remark}
			
			Because we are interested in the spectra of Hermitian quaternion Toeplitz matrices $T_n$, we give the conditions on $f$ 
			such that $T_n$ are positive definite. 
			
			\begin{corollary}\label{tnhpdcorol}
				Let $f$ be a generating function defined in Definition \ref{quatgfuncdef} for the quaternion Toeplitz Hermitian matrices $\{ T_n \}$. 
				Suppose the following conditions hold:
				\begin{description}
					\item[(i)]$|\Phi_2[f](x)|^2\leq \Phi_1[f](x)\Phi_1[f](-x)$ and $\Phi_1[f](x)\geq 0$ hold a.e. on $[-\pi,\pi]$;
					\item[(ii)] the set $\{|\Phi_2[f](x)|^2< \Phi_1[f](x)\Phi_1[f](-x) \ | \ x\in[-\pi,\pi]\}$ has a positive measure.
				\end{description}
				Then $T_n$ is HPD for each $n$.
			\end{corollary}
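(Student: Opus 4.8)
The plan is to prove $T_n$ is HPD by showing directly that $\mathbf{y}^{*}T_n\mathbf{y}>0$ for every nonzero $\mathbf{y}\in\mathbb{Q}^{n\times 1}$, using the integral representation of Theorem~\ref{qhermitmatrayleightogfuncthm}~\bfrefnumber{i}; since $T_n=T_n^{*}$ this is exactly positive definiteness. The ingredients I would reuse from the proof of that theorem are: $\check{f}(x)=\lambda_{\min}(G[f](x))$ and $\hat{f}(x)=\lambda_{\max}(G[f](x))$ for a.e.\ $x$, whence their product is $\check{f}(x)\hat{f}(x)=\phi_1(f)(x)\phi_1(f)(-x)-|\phi_2(f)(x)|^2$ (read off directly from the displayed formulas for $\check{f}$ and $\hat{f}$); and the norm identity $|\veccomplexify{\vectocpoly{\mathbf{y}}{x}}|^2=|\vectocpoly{\mathbf{y}}{x}|^2$. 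The whole argument then amounts to translating hypotheses \bfrefnumber{i}--\bfrefnumber{ii} into sign information on $\check{f}$.

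First I would translate \bfrefnumber{i} into $\check{f}(x)\geq 0$ a.e. Writing $A(x)=\phi_1(f)(x)+\phi_1(f)(-x)$, the bound $\phi_1(f)(x)\geq0$ a.e.\ forces $\phi_1(f)(-x)\geq0$ a.e.\ as well (by the substitution $x\mapsto-x$), so $A(x)\geq0$ a.e.; squaring the desired inequality $\check{f}(x)\geq0$ then reduces it to precisely $\phi_1(f)(x)\phi_1(f)(-x)\geq|\phi_2(f)(x)|^2$, which is \bfrefnumber{i}. Next, on the set $S:=\{x\in[-\pi,\pi]:|\phi_2(f)(x)|^2<\phi_1(f)(x)\phi_1(f)(-x)\}$, which has positive measure by \bfrefnumber{ii}, we have $\check{f}(x)\hat{f}(x)=\phi_1(f)(x)\phi_1(f)(-x)-|\phi_2(f)(x)|^2>0$; combined with $\check{f}(x)\geq0$ a.e.\ this yields $\check{f}(x)>0$ for a.e.\ $x\in S$ (if $\check{f}(x)=0$ the product would vanish).

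Now fix a nonzero $\mathbf{y}\in\mathbb{Q}^{n\times 1}$. Combining Theorem~\ref{qhermitmatrayleightogfuncthm}~\bfrefnumber{i} with the Rayleigh bound $\mathbf{v}^{*}G[f](x)\mathbf{v}\geq\check{f}(x)|\mathbf{v}|^2$ applied at $\mathbf{v}=\veccomplexify{\vectocpoly{\mathbf{y}}{x}}$, the norm identity above, and $\check{f}\geq0$ a.e.\ (to restrict the integral to $S$), one gets
\begin{equation*}
\mathbf{y}^{*}T_n\mathbf{y}=\frac{1}{2\pi}\int_{-\pi}^{\pi}[\veccomplexify{\vectocpoly{\mathbf{y}}{x}}]^{*}G[f](x)\veccomplexify{\vectocpoly{\mathbf{y}}{x}}\,dx\;\geq\;\frac{1}{2\pi}\int_{S}\check{f}(x)\,\bigl|\vectocpoly{\mathbf{y}}{x}\bigr|^2\,dx.
\end{equation*}
Since $\mathbf{y}\neq\mathbf{0}$, the quaternion trigonometric polynomial $\vectocpoly{\mathbf{y}}{x}=\sum_{s=1}^{n}\mathbf{y}(s)\exp(\unitp sx)$ is not identically zero, so $|\vectocpoly{\mathbf{y}}{x}|^2>0$ off a null set (the same reasoning used in the proof of Theorem~\ref{qhermitmatrayleightogfuncthm}~\bfrefnumber{iii}); together with $\check{f}>0$ a.e.\ on $S$ and $|S|>0$, the integrand is strictly positive on a set of positive measure, so the last integral is strictly positive. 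Hence $\mathbf{y}^{*}T_n\mathbf{y}>0$ for all nonzero $\mathbf{y}$, and $T_n$ is HPD for every $n$.

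The main obstacle is really just the strictness extracted from \bfrefnumber{ii}: one must notice that the factorization $\check{f}\hat{f}=\phi_1(f)(x)\phi_1(f)(-x)-|\phi_2(f)(x)|^2$ together with $\check{f}\geq0$ promotes the strict inequality in \bfrefnumber{ii} to strict positivity of $\lambda_{\min}(G[f](x))=\check{f}(x)$ on $S$; everything else is bookkeeping already carried out in the excerpt. An alternative organization avoids testing on $\mathbf{y}$ and argues through $\lambda_{\min}(T_n)$ instead: \bfrefnumber{i} gives $\lambda_{\min}(T_n)\geq\essinf_{x}\check{f}(x)\geq0$, and then one splits on whether $\check{f}$ agrees a.e.\ with a constant---if so, that constant is $>0$ by the analysis of $S$ above, and if not, Theorem~\ref{qhermitmatrayleightogfuncthm}~\bfrefnumber{iii} gives $\lambda_{\min}(T_n)>\essinf_{x}\check{f}(x)\geq0$---so in either case $\lambda_{\min}(T_n)>0$.
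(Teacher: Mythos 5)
Your proof is correct, and its main line is genuinely different from the paper's. The paper stays at the level of $\lambda_{\min}(T_n)$: it shows hypothesis \bfrefnumber{i} is equivalent, via a chain of algebraic equivalences, to $\check{f}\geq 0$ a.e., so Theorem \ref{qhermitmatrayleightogfuncthm}\bfrefnumber{ii} gives $\lambda_{\min}(T_n)\geq\essinf\limits_{x}\check{f}(x)\geq0$; it then splits into the cases $\essinf\limits_{x}\check{f}(x)>0$ (done) and $\essinf\limits_{x}\check{f}(x)=0$, and in the latter case argues by contradiction that $\check{f}$ cannot agree a.e.\ with a constant (else $|\phi_2(f)(x)|^2=\phi_1(f)(x)\phi_1(f)(-x)$ a.e., violating \bfrefnumber{ii}), so that Theorem \ref{qhermitmatrayleightogfuncthm}\bfrefnumber{iii} yields the strict inequality $\lambda_{\min}(T_n)>0$. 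This is essentially your closing ``alternative organization.'' Your primary argument instead tests $T_n$ directly on an arbitrary nonzero $\mathbf{y}$ via the integral representation of Theorem \ref{qhermitmatrayleightogfuncthm}\bfrefnumber{i}, and extracts strict positivity of $\check{f}$ on the positive-measure set $S$ from the product identity $\check{f}(x)\hat{f}(x)=\phi_1(f)(x)\phi_1(f)(-x)-|\phi_2(f)(x)|^2$ rather than from the paper's equivalence $\check{f}(x)=0\Leftrightarrow|\phi_2(f)(x)|^2=\phi_1(f)(x)\phi_1(f)(-x)$ together with $\phi_1(f)(x)\geq 0$. What your route buys is the elimination of both the case split and the constant-function dichotomy: strictness comes out in one stroke from $|S|>0$, $\check{f}>0$ a.e.\ on $S$, and the fact that the nonzero trigonometric polynomial $\vectocpoly{\mathbf{y}}{x}$ vanishes only on a null set --- which is exactly the mechanism the paper deploys inside its proof of Theorem \ref{qhermitmatrayleightogfuncthm}\bfrefnumber{iii}, so you are in effect inlining that argument rather than citing it. The cost is a small amount of repeated Rayleigh-quotient bookkeeping; both arguments are sound.
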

			
			\begin{proof}
				Recall the definition of $\check{f}$ given in Theorem \ref{qhermitmatrayleightogfuncthm}. Then,{\color{black}
					\begin{align*}
						& \check{f}(x)\geq 0\\
						&\Longleftrightarrow \Phi_1[f](x)+\Phi_1[f](-x)-\sqrt{|\Phi_1[f](x)-\Phi_1[f](-x)|^2+4|\Phi_2[f](x)|^2}\geq 0\\
						&\Longleftrightarrow [\Phi_1[f](x)+\Phi_1[f](-x)]^2\geq (\Phi_1[f](x)-\Phi_1[f](-x))^2+4|\Phi_2[f](x)|^2\\
						&~\qquad{\rm~and~}\Phi_1[f](x)+\Phi_1[f](-x)\geq 0,\\
						&\Longleftrightarrow [\Phi_1[f](x)\Phi_1[f](-x)]\geq |\Phi_2[f](x)|^2{\rm~and~}\Phi_1[f](x)+\Phi_1[f](-x)\geq 0,
					\end{align*}
					where the second equivalence comes from 
						the fact that $\Phi_1[f]$ is real valued. Note that $|\Phi_2[f](x)|^2\leq \Phi_1[f](x)\Phi_1[f](-x)$ indicates that $\Phi_1[f](x)$ and $\Phi_1[f](-x)$ have the same sign, under which  $\Phi_1[f](x)+\Phi_1[f](-x)\geq 0$ is equivalent to $\Phi_1[f](x)\geq 0$.  }From the discussion above, we conclude that 
					\begin{align}
						&\check{f}(x)> 0\Longleftrightarrow |\Phi_2[f](x)|^2< \Phi_1[f](x)\Phi_1[f](-x){\rm~and~}\Phi_1[f](x)\geq 0;\\
						&\check{f}(x)=0\Longleftrightarrow  |\Phi_2[f](x)|^2= \Phi_1[f](x)\Phi_1[f](-x){\rm~and~}\Phi_1[f](x)\geq 0.\label{checkgnonegeq1}
					\end{align}
					Hence, the condition in \bfrefnumber{i} implies that
					\begin{equation*}
						\essinf\limits_{x\in[-\pi,\pi]}\check{f}\geq 0.
					\end{equation*}
					If $\essinf\limits_{x\in[-\pi,\pi]}\check{f}> 0$, then Theorem \ref{qhermitmatrayleightogfuncthm}\bfrefnumber{ii} implies that $T_n$ is HPD for each $n$. 
					
					It remains to the discuss the case $\essinf\limits_{x\in[-\pi,\pi]}\check{f}=0$. We shall show by contradiction that there is no constant function equal to $\check{f}$ a.e. on $[-\pi,\pi]$. Assume that there exists a constant $\nu_0$ such that $\check{f}=\nu_0$ a.e. on $[-\pi,\pi]$. Then,  $\essinf\limits_{x\in[-\pi,\pi]}\check{f}=0$ shows that $\nu_0=0$. In other words, $\check{f}=0$ a.e. on $[-\pi,\pi]$, which together with \eqref{checkgnonegeq1} implies that $|\Phi_2[f](x)|^2= \Phi_1[f](x)\Phi_1[f](-x)$ a.e. on $[-\pi,\pi]$. That means the set\\ $\{|\Phi_2[f](x)|^2< \Phi_1[f](x)\Phi_1[f](-x)|x\in[-\pi,\pi]\}$ has a zero measure which contradicts with the condition in \bfrefnumber{ii}. Therefore, the assumption is not valid.  Then Theorem \ref{qhermitmatrayleightogfuncthm}\bfrefnumber{iii} implies that
					\begin{equation*}
						0=\essinf\limits_{x\in[-\pi,\pi]}\check{f}<\lambda_{\min}(T_n),
					\end{equation*}
					holds for each $n$. In other words, $T_n$ is HPD for each $n$.
					The proof is complete.
				\end{proof}
				
				\begin{remark}
					From Theorem \ref{qhermitmatrayleightogfuncthm}{\bf (ii)}, we also see that in the case of $T_n\in\mathbb{Q}_{(0,1)}^{n\times n}$ (i.e., $\Phi_2[f]\equiv 0$), it holds that
					\begin{align*}
						& \check{f}(x) \\
						=& \frac{1}{2}\left[\Phi_1[f](x)+\Phi_1[f](-x)-|\Phi_1[f](x)-\Phi_1[f](-x)|\right]=\min\{\Phi_1[f](x),\Phi_1[f](-x)\}.
					\end{align*}
					That means, in the case of $T_n\in\mathbb{Q}_{(0,1)}^{n\times n}$, $\Phi_1[f](x)>0$ for $x\in[-\pi,\pi]$ is sufficient to guarantee that $\check{f}(x)>0$ for $x\in[-\pi,\pi]$ and thus to guarantee that $T_n$ is HPD for all $n$. This coincides with the intuition built from the classical theory of generating function for Hermitian complex Toeplitz matrices. 
				\end{remark}
				
				Another important theorem on spectral distribution of Hermitian Toeplitz matrices is the Grenander-Szeg\"{o} Theorem.
				Here we give a quaternion-version Grenander-Szeg\"{o} theorem based on 
				quaternion-valued generating functions. 
					The proof is analogous to that of the Grendander-Szego theorem in the complex field. We present the details in Appendix A.
					
					\begin{theorem}\label{quatszegothm}
						Let $f$ be a generating function defined in Definition \ref{quatgfuncdef} for the quaternion Toeplitz Hermitian matrices $\{ T_n \}_{n\in\mathbb{N}^{+}}$. 
						\begin{description}
							\item[(i)] Suppose in addition $f\in L^{\infty}([-\pi,\pi])$. Then, for any continuous function $F$ on the closed interval $[\check{a},\hat{a}]$, it holds that
							\begin{equation*}
								\lim\limits_{n\rightarrow\infty}\frac{1}{n}\sum\limits_{s=1}^{n}F(\lambda_{s}(T_n))=\frac{1}{4\pi}\int_{-\pi}^{\pi}\left[F\left(\hat{f}(x)\right)+F\left(\check{f}(x)\right)\right]dx,
							\end{equation*}
							where $\check{f}$ and $\hat{f}$ are defined in Theorem \ref{qhermitmatrayleightogfuncthm}{\bf (ii)}; here,
							$\check{a}=\essinf\limits_{x\in[-\pi,\pi]} \check{f}(x)$ and $\hat{a}=\esssup\limits_{x\in[-\pi,\pi]} \hat{f}(x)$ are both finite numbers since $\check{f},\hat{f}\in L^{\infty}([-\pi,\pi])$ {\color{black}are} guaranteed by $f\in L^{\infty}([-\pi,\pi])$.
							\item[(ii)] Suppose in addition $f\in L^2([-\pi,\pi])$. Then, for any continuous function $F$ with compact support in $\mathbb{R}$, it holds that
							\begin{equation*}
								\lim\limits_{n\rightarrow\infty}\frac{1}{n}\sum\limits_{s=1}^{n}F(\lambda_{s}(T_n))=\frac{1}{4\pi}\int_{-\pi}^{\pi}\left[F(\hat{f}(x))+F(\check{f}(x))\right]dx,
							\end{equation*}
							where $\check{f}$ and $\hat{f}$ are defined in Theorem \ref{qhermitmatrayleightogfuncthm}{\bf (ii)}.
						\end{description}
					\end{theorem}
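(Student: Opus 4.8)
The plan is to push the whole statement through the $2n\times 2n$ complex Hermitian representative $\matcomplexify{T_n}$ and then replay the classical proof of the Grenander--Szeg\"o theorem. Two observations reduce everything to that setting. First, the relation $\sigma(T_n)=\sigma(\matcomplexify{T_n})$ recalled in the excerpt can be refined to a multiplicity statement: a right eigenvalue of the quaternion Hermitian $T_n$ of multiplicity $m$ is an eigenvalue of $\matcomplexify{T_n}$ of multiplicity $2m$. This is the standard behaviour of the complex adjoint of a quaternion Hermitian matrix; it follows from the identity $\veccomplexify{T_n\mathbf{x}}=\matcomplexify{T_n}\veccomplexify{\mathbf{x}}$ (which sends a right eigenvector of $T_n$ to a complex eigenvector of $\matcomplexify{T_n}$ with the same eigenvalue) together with the fact that $\matcomplexify{T_n}$ commutes with a fixed conjugate-linear quaternionic structure on $\mathbb{Q}_{(0,1)}^{2n\times 1}$, which forces every real eigenvalue of $\matcomplexify{T_n}$ to have even multiplicity. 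Hence
\begin{equation*}
\frac1n\sum_{s=1}^{n}F(\lambda_s(T_n))=\frac1{2n}\sum_{s=1}^{2n}F(\lambda_s(\matcomplexify{T_n})).
\end{equation*}
Second, reading the blocks of $\matcomplexify{T_n}$ off the definition of $\mathcal{M}(\cdot)$, one checks that $\matcomplexify{T_n}$ is, after the permutation interleaving the two halves of its index set, exactly the $n$th section of the $2\times 2$-block Hermitian Toeplitz matrix with block $\matcomplexify{t_{i-j}}$ in position $(i,j)$ (with $t_{-s}:=\bar t_s$); its generating matrix symbol is $\sum_{m}\matcomplexify{t_m}\exp(\unitp m x)=G[f](x)$, a Hermitian $2\times 2$ symbol in $L^1([-\pi,\pi])^{2\times 2}$ whose two pointwise eigenvalues are $\check f(x)$ and $\hat f(x)$ by Theorem \ref{qhermitmatrayleightogfuncthm}(ii), and the permutation does not change the spectrum. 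So the theorem reduces to the matrix-valued (block) Grenander--Szeg\"o theorem for $\{\mathcal T_n(G[f])\}$, which one may quote from the block-Toeplitz literature or prove along the classical lines as sketched next.

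For part (i) I would argue in two steps. \emph{Moments:} for $F(t)=t^k$, combine $\tfrac1n\operatorname{tr}\mathcal T_n(H)=\tfrac1{2\pi}\int_{-\pi}^{\pi}\operatorname{tr}H(x)\,dx$ (valid for any $H\in L^1$ at fixed block size) with the boundary-effect estimate $\operatorname{tr}\big(\mathcal T_n(G)^k\big)=\operatorname{tr}\big(\mathcal T_n(G^k)\big)+o(n)$. The latter is first verified for a matrix trigonometric polynomial $G$ of degree $d$, where $\mathcal T_n(G)$ is block-banded so that the discrepancy between a product of sections and the section of the product is confined to the $O(1)$ corner blocks; one then passes to a general bounded $G$ by approximating it in $L^2$ by its Fej\'er means $G_\ell$ (which satisfy $\|G_\ell\|_\infty\le\|G\|_\infty$) and using the telescoping bound $\tfrac1n\big|\operatorname{tr}(\mathcal T_n(G)^k)-\operatorname{tr}(\mathcal T_n(G_\ell)^k)\big|\le C(k,\|G\|_\infty)\big(\tfrac1{2\pi}\int\|G-G_\ell\|_F^2\big)^{1/2}$, which is uniform in $n$. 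Since $G[f](x)$ is $2\times 2$ Hermitian with eigenvalues $\check f(x),\hat f(x)$, this yields $\tfrac1{2n}\operatorname{tr}(\mathcal T_n(G[f])^k)\to\tfrac1{4\pi}\int[\check f(x)^k+\hat f(x)^k]\,dx$. \emph{Density:} by Theorem \ref{qhermitmatrayleightogfuncthm}(ii) every $\lambda_s(T_n)$ lies in the fixed compact interval $[\check a,\hat a]$ (finite because $f\in L^\infty$); given continuous $F$ and $\varepsilon>0$ choose, by Weierstrass, a polynomial $P$ with $\sup_{[\check a,\hat a]}|F-P|<\varepsilon$ and finish with the usual $\varepsilon/3$ argument.

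For part (ii) I would run the standard truncation argument, again on the complex side. Let $f^{(m)}$ be obtained by truncating $\phi_1(f)$ and $\phi_2(f)$ at level $m$ while keeping $\phi_1(f^{(m)})$ real and $\phi_2(f^{(m)})$ odd: then $f^{(m)}$ is again a generating function (the converse of Lemma \ref{gfuncstructurelm}, immediate from the definition), now bounded, and $\|f-f^{(m)}\|_{L^2}\to0$. Using $\|\matcomplexify{A}\|_F^2=2\|A\|_F^2$, the block-Toeplitz bound $\tfrac1{n}\|\mathcal T_n(H)\|_F^2\le\tfrac1{2\pi}\int\|H(x)\|_F^2\,dx$, and $\int_{-\pi}^{\pi}\|G[f]-G[f^{(m)}]\|_F^2\,dx=2\|f-f^{(m)}\|_{L^2}^2$, one gets $\tfrac1n\|T_n-T_n^{(m)}\|_F^2\le C\|f-f^{(m)}\|_{L^2}^2$, uniformly in $n$. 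The complex Hoffman--Wielandt inequality applied to $\matcomplexify{T_n}$ and $\matcomplexify{T_n^{(m)}}$ gives $\sum_{s=1}^{n}|\lambda_s(T_n)-\lambda_s(T_n^{(m)})|^2\le\|T_n-T_n^{(m)}\|_F^2$ (both spectra in increasing order). Since $F$ is uniformly continuous with compact support, splitting the indices $s$ into those where the eigenvalue perturbation is small and the (few, by the $\ell^2$ bound) ones where it is large shows that $\tfrac1n\sum_{s}|F(\lambda_s(T_n))-F(\lambda_s(T_n^{(m)}))|\le\eta(m)$ uniformly in $n$, with $\eta(m)\to0$. Applying part (i) to the bounded symbol $f^{(m)}$ and then letting $m\to\infty$ (the right-hand integrals converge by dominated convergence, since $\check f^{(m)}\to\check f$, $\hat f^{(m)}\to\hat f$ a.e.\ and $F$ has compact support) finishes the proof.

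The step I expect to be the main obstacle is the matrix-valued boundary-effect trace estimate $\operatorname{tr}(\mathcal T_n(G)^k)=\operatorname{tr}(\mathcal T_n(G^k))+o(n)$, equivalently the convergence of $\tfrac1{2n}\operatorname{tr}(\matcomplexify{T_n}^k)$ to $\tfrac1{4\pi}\int[\check f^k+\hat f^k]$: this is the combinatorial core of Szeg\"o's theorem and must be executed for $2\times 2$-block symbols of only $L^\infty$ (resp.\ $L^2$) regularity. Everything else is either a verbatim transcription of the classical complex-scalar arguments or routine bookkeeping through the $\mathcal{M}(\cdot)$ representation --- the doubling of eigenvalue multiplicities, the identity $\|\matcomplexify{A}\|_F^2=2\|A\|_F^2$, and the block-Toeplitz identification of $\matcomplexify{T_n}$ --- which need care but no genuinely new ideas.
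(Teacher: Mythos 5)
Your proposal is correct and follows essentially the same route as the paper: reduce to the $2\times 2$-block complex Hermitian Toeplitz matrix obtained by permuting $\matcomplexify{T_n}$, observe that each eigenvalue of $T_n$ appears in $\matcomplexify{T_n}$ with doubled multiplicity, identify the block symbol as $G[f]$ with pointwise eigenvalues $\check{f}$ and $\hat{f}$, and invoke the block Grenander--Szeg\"{o} theorem. The only difference is that the paper simply cites the block spectral-distribution results for $L^{\infty}$ and $L^{2}$ matrix symbols from the literature, whereas you additionally sketch a self-contained proof of them (moment method plus Weierstrass for $L^\infty$, truncation plus Hoffman--Wielandt for $L^2$); that sketch is standard and sound.
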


					\subsection{Quaternion Signal Processing}\label{qsignalsubsec}
					
					In this subsection, we give examples of generating functions arising from 
					quaternion signal processing. 
					
					We consider a discrete-time quaternion signal 
					$\{ x(t) = x_0(t) + x_1(t)\unitp+x_2(t)\unitq+x_3(t)\unitr \}$ with 
					a finite 2nd-moment
					$\mathcal{E}(|x(t)|^2)<+\infty$ for each $t$,
					where $\mathcal{E}(\cdot)$ denotes the expectation operator,
					see \cite{took2011augmented,chen2023phase}.
					The linear prediction is to find a set of linear combination coefficients $\{\alpha_s\}_{s=1}^{n}\subset\mathbb{Q}$ such that the linear combination $\sum\limits_{s=1}^{n}\alpha_{s}x(t-s)$ best fitting $x(t)$ under the expected value:
						\begin{align}
							&\mathcal{E}\left(\left(x(t)-\sum\limits_{s=1}^{n}\alpha_{s}x(t-s)\right)\overline{\left(x(t)-\sum\limits_{s=1}^{n}\alpha_{s}x(t-s)\right)}\right)\notag\\
							&=\min\limits_{\{w_s\}_{s=1}^{n}\subset\mathbb{Q}}\mathcal{E}\left(\left(x(t)-\sum\limits_{s=1}^{n}w_{s}x(t-s)\right)\overline{\left(x(t)-\sum\limits_{s=1}^{n}w_{s}x(t-s)\right)}\right).\label{alphasoptimiz}
						\end{align}
					Once  $\{\alpha_s\}_{s=1}^{n}$ is computed, one can then predict the value of the signal at next time step by computing the linear combination of the most recent $n$ many observations  with $\{\alpha_s\}_{s=1}^{n}$ as the left-hand side coefficients.
					It is straightforward to verify that \eqref{alphasoptimiz} is equivalent to the systems of equations
					\begin{equation*}
						\mathcal{E}\left(\left(x(t)-\sum\limits_{s=1}^{n}\alpha_{s}x(t-s)\right)\overline{x(t-s')}\right)=0, \ s'=1,2,...,n,
					\end{equation*}
					{\color{black} which means
						\begin{align} \label{linearsys}
							\sum\limits_{s=1}^{n}\mathcal{E}\left(x(t-s')\overline{x(t-s)}\right)\overline{\alpha_s}&=\overline{\sum\limits_{s=1}^{n}\alpha_s\mathcal{E}\left(x(t-s)\overline{x(t-s')}\right)}\notag\\
							&=\overline{\mathcal{E}\left(x(t)\overline{x(t-s')}\right)}=\mathcal{E}\left(x(t-s')\overline{x(t)}\right), \ s'=1,2,...,n.
					\end{align}}
					Suppose
					$\mathcal{E}\left(x(t)\overline{x(t\textcolor{black}{+}s)}\right)$ is a function only of the time-lag $s$, i.e., there exists a covariance function of $s$, $\eta(s)$ such that
					\begin{equation}\label{covarianceshiftinvarianteq}
						\mathcal{E}\left(x(t)\overline{x(t+s)}\right)=\eta(s),\quad s=0,1, 2,\cdots.
					\end{equation}
					With \eqref{covarianceshiftinvarianteq}, $\mathcal{E}\left(x(t)\overline{x(t+s)}\right)$ for negative $s$ is also a function only of time-lag $s$. This is due to the following facts
					\begin{align*}
						\mathcal{E}\left(x(t)\overline{x(t+s)}\right)&=\overline{\mathcal{E}\left(x(t+s)\overline{x(t+s+(-s))}\right)}=\overline{\eta(-s)},\quad s=-1,-2,\cdots.
					\end{align*}
					Then (\ref{linearsys}) indicates a Hermitian quaternion Toeplitz linear system as follows:
					\begin{equation}\label{hermqtsystem}
						T_n \bm{\alpha} = \mathbf{w} = \left[\eta(1),\eta(2),...,\eta(n)\right]^{\rm T},
					\end{equation}
					where 
					$$
					[ T_n ]_{s',s} =\begin{cases}
						\mathcal{E}\left(x(t-s')\overline{x(t-s'+(s'-s))}\right)=\eta(|s'-s|),\quad s'\geq s,\\
						\mathcal{E}\left(x(t-s')\overline{x(t-s'+(s'-s))}\right)=\overline{\eta(|s'-s|)},\quad s'<s,
					\end{cases}
					$$
					and the components $\overline{\alpha_{s'}}$ $(s'=1,2,...,n)$  
					of the unknown vector $\alpha$.
					When $\{\eta(s)\}_{s=0}^{\infty}$ is absolutely summable, it is straightforward to verify that 
					\begin{equation} \label{genf}
						f(\theta)=\eta(0)+\sum\limits_{s=1}^{+\infty} \eta(s) \exp(\unitp s \theta)+ \overline{\eta(s)} 
						\exp(-\unitp s \theta), \ \theta \in [-\pi,\pi],
					\end{equation}
					meets Definition \ref{quatgfuncdef} as a generating function for $T_n$
					in \eqref{hermqtsystem}.
					
					{\rm
						\begin{example}\label{ar1}
							We consider a quaternionic signal  $x(t)$ generated by
								\begin{equation*}
									x(t)=\beta x(t-1)+e(t),
								\end{equation*}
								where $\beta$ is a given quaternion parameter with $|\beta|\in(0,1)$; $e(t)=e_0(t)+e_1(t)\unitp+e_2(t)\unitq+e_3(t)\unitr$ represents the Gaussian noise; $e_{v}(t)$ (for different $v$ and $t$) obeys i.i.d. Gaussian distribution with $\delta^2=1$  as variance and 0 as mean. It is straightforward to verify that
								\begin{equation*}
									x(t+s)=\beta^sx(t)+\sum\limits_{l=1}^{s}\beta^{l-1}e(t+s+1-l),\quad s=1,2,\cdots.
								\end{equation*}
								With the equalities above, one can show that
								\begin{align*}
									& \eta(s)=\mathcal{E}\left(x(t)\overline{x(t+s)}\right)=\frac{4\delta^2\bar{\beta}^s}{1-|\beta|^2},\quad s=0,1,2,\cdots.
								\end{align*}
								Then, we obtain
								$$\sum\limits_{s=0}^{+\infty}|\eta(s)|=\frac{4 \delta^2}{1-|\beta|^2}\sum\limits_{s=0}^{+\infty}|\beta|^s=\frac{4\delta^2 }{1-|\beta|^2}\times \frac{1}{1-|\beta|}=\frac{4\delta^2 }{(1-|\beta|^2)(1-|\beta|)}<+\infty,$$
								which implies that the sequence $\{\eta(s)\}_{s=0}^{+\infty}$ is absolutely summable. 
								Rewrite $\beta$ as $\beta=\beta_0+\beta_1\unitp+\beta_2\unitq+\beta_3\unitr$ with $\beta_0,\beta_1,\beta_2,\beta_3\in\mathbb{R}$. Then,
								\begin{equation}\label{mupolardecompos}
									\beta=|\beta|\exp({\bf m}\theta_0),
								\end{equation}
								{\color{black} where  $\exp({\bf m}x):=\cos(x)+{\bf m}\sin(x)$, ${\bf m}:=\frac{\beta_I}{|\beta_I|},~ \beta_I:=\beta_1\unitp+\beta_2\unitq+\beta_3\unitr$. Note that $\theta_0\in[0,\pi]$ is the unique number satisfying 
									\begin{align*}
										&\cos(\theta_0)=\beta_0/|\beta|,\quad \sin(\theta_0)=|\beta_I|/|\beta|,
									\end{align*}
									and ${\bf m}$ is a pure imaginary  unit. Hence,
									\eqref{mupolardecompos} is the so-called polar decomposition of $\beta$. Now $\beta^s$ can be expressed as
									\begin{equation*}
										\beta^s=|\beta|^s\exp({\bf m}s\theta_0),\quad s=0,1,2,\cdots.
								\end{equation*}}
								Therefore, we have 
								\begin{align*}
									&\beta^s=\phi_1(\beta^s)+\phi_2(\beta^s)\unitq,\quad \phi_1(\beta^s)=|\beta|^s\left[\cos(s\theta_0)+|\beta_I|^{-1}\beta_1\sin(s\theta_0)\unitp\right],\\
									&\phi_2(\beta^s)=\frac{|\beta|^s}{|\beta_I|}\left[\beta_2\sin(s\theta_0)+\beta_3\sin(s\theta_0)\unitp\right].
								\end{align*}
								By \eqref{genf}, we know that the Hermitian quaternion matrix $T_n$ defined in \eqref{hermqtsystem} corresponding to Example \ref{ar2} is generated by a function $f=\Phi_1[f]+\Phi_2[f]\unitq$ with
								\begin{align*}
									\Phi_1[f](\theta)=&\eta(0)+\sum\limits_{s=1}^{+\infty}\phi_1(\eta(s))\exp(\unitp s\theta)+\phi_1\left(\overline{\eta(s)}\right)\exp(-\unitp s\theta)\\
									=&\frac{4 \delta^2}{1-|\beta|^2}\bigg[-1+\frac{(1+\beta_1/|\beta_I|)(1-|\beta|\cos(\theta_0-\theta))}{[1-|\beta|\cos(\theta_0-\theta)]^2+|\beta|^2\sin^2(\theta_0-\theta)}\\
									&+\frac{(1-\beta_1/|\beta_I|)(1-|\beta|\cos(\theta+\theta_0))}{[1-|\beta|\cos(\theta+\theta_0)]^2+|\beta|^2\sin^2(\theta+\theta_0)}
									\bigg],\\
									\Phi_2[f](\theta)=&\sum\limits_{s=1}^{+\infty}\phi_2\left(\overline{\eta(s)}\right)\exp(\unitp s\theta)+\phi_2(\eta(s))\exp(-\unitp s\theta)\\
									=&\frac{4 \delta^2(\beta_3-\beta_2\unitp)}{(1-|\beta|^2)|\beta_I|}\bigg[\frac{1-|\beta|\cos(\theta+\theta_0)}{[1-|\beta|\cos(\theta+\theta_0)]^2+|\beta|^2\sin^2(\theta+\theta_0)}\\
									&-\frac{1-|\beta|\cos(\theta_0-\theta)}{[1-|\beta|\cos(\theta_0-\theta)]^2+|\beta|^2\sin^2(\theta_0-\theta)}\bigg].
								\end{align*}
									With $f$ given above, we can easy get the corresponding $\check{f}$ from its definition in Theorem \ref{qhermitmatrayleightogfuncthm}):
									$$
									\check{f}(\theta)=\frac{4\delta^2}{1+|\beta|^2-2|\beta|\cos(|\theta|+\theta_0)},\quad 
									\theta \in [-\pi, \pi].
									$$	
									It is straightforward to verify $\check{f}$ to be a positive function, which guarantees the covariance matrix $T_n$ to be an HPD quaternion matrix.
								\end{example}
								
								\begin{example}\label{ar2}
										In this example, we consider a quaternionic noise $x(t)$ generated by 
										\begin{equation*}
											x(t)=\beta e(t-1)+e(t),
										\end{equation*}
										where $\beta$ is a given quaternion parameter with $|\beta|\in(0,1)$; $e(t)=e_0(t)+e_1(t)\unitp+e_2(t)\unitq+e_3(t)\unitr$ represents the Gaussian noise; $e_{v}(t)$ (for different $v$ and $t$) obeys i.i.d. Gaussian distribution with $\delta^2=1$  as variance and 0 as mean. It is straightforward to verify that
										\begin{equation*}
											\eta(s)=\mathcal{E}\left(x(t)\overline{x(t+s)}\right)=\begin{cases}
												4\delta^2(|\beta|^2+1),\quad s=0,\\
												4\delta^2\bar{\beta},\quad s=1,\\
												0,\quad s\geq 2.
											\end{cases}.
										\end{equation*}
										Clearly, the sequence $\{\eta(s)\}_{s=0}^{+\infty}$ is absolutely summable. 
										The generating function can be constructed, which is given by
										$$
										f(\theta)= \Phi_1[f](\theta) + \Phi_2[f](\theta) \unitq, \ \theta\in[-\pi,\pi],
										$$	
										where 	
										$$ 
										\Phi_1[f](\theta)=4\delta^2(|\beta|^2+1)+8\delta^2\left[\beta_0\cos(\theta)+\beta_1\sin(\theta)\right], \ \theta\in[-\pi,\pi],
										$$
										$$ 
										\Phi_2[f](\theta)=[8\delta^2(-\beta_3+\beta_2\unitp)\sin(\theta)], \ \theta\in[-\pi,\pi],
										$$
										and $\beta=\beta_0+\beta_1\unitp+\beta_2\unitq+\beta_3\unitr$.
										With $f$ defined above, by the definition of its corresponding function $\check{f}$  in Theorem \ref{qhermitmatrayleightogfuncthm}\bfrefnumber{ii}), we have,
										$$
										\check{f}(\theta)=4\delta^2\big[1+|\beta|^2+2|\beta|\cos(\theta_0+|\theta|)\big], \quad \theta \in [-\pi,\pi].
										$$
										It is straightforward to verify that  $\check{f}$ is a positive function, which guarantees the covariance matrix $T_n$ to be positive definite.
									\end{example}
									
									\section{Circulant Preconditioned Conjugate Gradient Method}
									Theory of circulant preconditioners for preconditioning real or complex Toeplitz linear systems has been well studied in the literature; see, e.g., \cite{ng2004iterative,chan2007introduction,jin2008survey,chan1996conjugate,estatico2012note,huckle1993some,strela1996circulant,chan1989circulant,strang1986proposal,pestana2015preconditioned}. 
									However, circulant preconditioners for quaternion Toeplitz systems are rarely to see. The main purpose of this section is to demonstrate how the classical theory of circulant preconditioner for complex HPD Toeplitz matrix is extended to that for quaternion HPD Toeplitz matrix. 
									To know more about the classical preconditioning theories for block Toeplitz matrices, one may refer to \cite{chan1994circulant,serra1998asymptotic,huckle2007preconditioning,huckle2012compact,an2024aggregation,jin1995note} and the references therein.
									
									\subsection{Circulant Preconditioners}	
									A Toeplitz matrix $C_n\in\mathbb{Q}^{n\times n}$ of the following form is called a circulant matrix.
									\begin{equation}\label{circmatform}
										C_n=\left[
										\begin{array}
											[c]{cccc}
											c_0  &  c_{n-1}&\ldots   & c_1 \\
											c_1  &  c_0 & \ddots   &\vdots \\
											\vdots&\ddots &\ddots&c_{n-1}\\
											c_{n-1}   & \ldots &  c_1  &  c_{0} 
										\end{array}
										\right]
									\end{equation}
									which its rows are composed of the same elements and each row is rotated one element to the right relative to the preceding row.
										Let $\mathbf{v}:=(c_0,c_1,\cdots,c_{n-1})^T$, the first column of $C$, we notice that $C$ can be identified by $\mathbf{v}$. For simplicity, we denote $C_n$ by $\vectocirc{\mathbf{v}}$. 
										Given a general $n\times n$ Toeplitz matrix $T_n=[t_{s-l}]_{s,l=1}^{n}$, let
										\begin{align*}
											&\strangfcoloftoep{T_n}:=(t_0,t_1,...,t_{\lfloor (n-1)/2\rfloor},t_{-\lfloor (n-1)/2\rfloor},...,t_{-2},t_{-1})^{\rm T}, ~ \text{if $n$ is odd} ;\\
											&\strangfcoloftoep{T_n}:=(t_0,t_1,...,t_{\lfloor (n-1)/2\rfloor},0,t_{-\lfloor (n-1)/2\rfloor},...,t_{-2},t_{-1})^{\rm T}, ~ \text{if $n$ is even.}
										\end{align*}
										Then, the Strang circulant preconditioner denoted by $c(T_n)$ for a Toeplitz matrix 
										is defined by
										\begin{equation*}
											c(T_n):=\vectocirc{\strangfcoloftoep{T_n}}.
										\end{equation*}
										Apparently,  $c(T_n)$ is Hermitian whenever $T_n$ is a Hermitian Toeplitz matrix. 
										Here we employ the Strang circulant preconditioner as an example, other circulant preconditioners can be 
										constructed and studied similarly. 
										
										For any $m\in\mathbb{N}^{+}$, denote 
										\begin{equation*}
										\textcolor{black}{F_{\unitp,m}:=\frac{1}{\sqrt{m}}\left[\exp\left(\frac{2\pi\unitp (s-1)(l-1)}{m}\right)\right]_{s,l=1}^{m}}.
										\end{equation*}
										$F_{\unitp,m}$ is called a quaternion discrete Fourier matrix, which is unitary; see, e.g., \cite{pan2024block}.
										As shown in \cite{pan2024block}, for general $\mathbf{v}\in\mathbb{Q}^{m\times 1}$, $F_{\unitp,m}\vectocirc{\mathbf{v}}F_{\unitp,m}^{*}$ has 
										an $X$-shape sparse pattern shown as follows:
										\begin{equation*}
											F_{\unitp,m}\vectocirc{\mathbf{v}}F_{\unitp,m}^{*}=\left[\begin{array}[c]{cccccc}
												*&0&\ldots&\ldots&\ldots&0\\
												0&*&0&\ldots&0&*\\
												\vdots&0&\ddots&& \iddots&0\\
												\vdots&\vdots&& &&\vdots\\
												\vdots&0&\iddots&&\ddots&0\\
												0&*&0&\ldots&0&*
											\end{array}\right].
										\end{equation*}
										from which it is clear to see that for general $\mathbf{v}\in\mathbb{Q}^{m\times 1}$, applying Fourier transform to the first column of $\vectocirc{\mathbf{v}}$ would no longer obtain a vector with components in $\sigma(\vectocirc{\mathbf{v}})$, the right spectrum of $\vectocirc{\mathbf{v}}$. 
										Interestingly, the next lemma states that $c(T_n)$ can be diagonalized by 
										$F_{\unitp,m}$ and its eigenvalues are the eigenvalues of 
										2-by-2 block matrices where their entries are the values evaluated at the partial sum of quaternion-valued generating functions. 
										
										\begin{lemma}\label{strangcireiglimitlm}
											Suppose $\{\eta(s)\}_{s=0}^{\infty}$ is absolutely summable and 
											$T_n$ is generated by 
											$f(x)=\eta(0)+\sum\limits_{s=1}^{+\infty} \eta(s) \exp(\unitp sx)+ \overline{\eta(s)} \exp(-\unitp sx)$ for $x \in [\pi,\pi]$
											$(cf. (\ref{genf}))$.
											\begin{description}
												\item[(i)] $c(T_n)$ is Hermitian and the eigenvalues of $c(T_n)$ are real. Moreover, we have 
												\begin{equation*}
													\sigma(c(T_n))= \bigcup_{s=0}^{m}\sigma\left(G
													\left[f_{m}\right]\left(\frac{2\pi s}{n}\right)\right)
													\ {\rm with} \ m = \left\lfloor\frac{n}{2}\right\rfloor,
												\end{equation*}
												where $G[\cdot]$ is defined in Theorem \ref{qhermitmatrayleightogfuncthm}, and 
												$f_{m}$ is the $m$-th partial sum of $f$, i.e., 
												$f_m(x) = \eta(0)+\sum\limits_{s=1}^{m} 
												\eta(s) \exp(\unitp s x)+ \overline{\eta(s)} \exp(-\unitp s x)$ for $x \in [\pi,\pi]$.
												\item[(ii)]
												\begin{equation*}
													\lim\limits_{n\rightarrow\infty}\lambda_{\min}(c(T_n))=\min\limits_{x\in[0,\pi]}\check{f}(x),\qquad \lim\limits_{n\rightarrow\infty}\lambda_{\max}(c(T_n))=\max\limits_{x\in[0,\pi]}\hat{f}(x),
												\end{equation*}
												where $\check{f}$ and $\hat{f}$ are defined in Theorem \ref{qhermitmatrayleightogfuncthm}.
											\end{description}		
										\end{lemma}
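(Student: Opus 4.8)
The plan is to prove part (i) by passing to the $2n\times 2n$ companion matrix $\matcomplexify{c(T_n)}$ over $\mathbb{Q}_{(0,1)}$, diagonalizing its (circulant) blocks by the Fourier matrix, and reading off a direct sum of $2\times 2$ Hermitian blocks; part (ii) then follows from part (i) by a uniform-approximation argument.

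For (i): since $T_n$ is Hermitian, so is $c(T_n)$, hence its $n$ right eigenvalues are real (Lemma~\ref{hpdqmatproplm}(ii)) and $\sigma(c(T_n))=\sigma(\matcomplexify{c(T_n)})$. Now $\matcomplexify{c(T_n)}$ is a $2\times 2$ block matrix whose four blocks are $\phi_1(c(T_n))$, $-\phi_2(c(T_n))$, $\overline{\phi_2(c(T_n))}$, $\overline{\phi_1(c(T_n))}$, and — because $\phi_1,\phi_2$ and conjugation act entrywise — each of these is a $\mathbb{Q}_{(0,1)}$-valued circulant; hence all four are simultaneously diagonalized by $F_{\unitp,n}$ (this is the classical fact that the DFT diagonalizes complex circulants, carried over through the field isomorphism $\mathbb{Q}_{(0,1)}\cong\mathbb{C}$). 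Conjugating $\matcomplexify{c(T_n)}$ by $F_{\unitp,n}\oplus F_{\unitp,n}$, the two unconjugated blocks become diagonal matrices whose $k$-th entries — by the standard Strang-circulant computation — are $\phi_1(f_m)$ and $\phi_2(f_m)$ evaluated at the grid point $x_k:=2\pi(k-1)/n$, while the two conjugated blocks become diagonal with entries equal to the conjugated and frequency-reversed eigenvalues of $\phi_1(c(T_n))$, $\phi_2(c(T_n))$ — this is what introduces a value of $\phi_1(f_m)$ at $-x_k$. Pairing index $k$ of the first block with index $k$ of the second block exhibits $\matcomplexify{c(T_n)}$, up to unitary similarity, as a direct sum over $k=1,\dots,n$ of $2\times 2$ Hermitian blocks with diagonal entries $\phi_1(f_m)(x_k)$, $\phi_1(f_m)(-x_k)$ and off-diagonal entry $\pm\phi_2(f_m)(x_k)$; since $\phi_1(f_m)$ is real-valued and $\phi_2(f_m)$ is odd, each such block is exactly $G[f_m](x_k)$ (its spectrum is insensitive to the sign/conjugation normalization of the off-diagonal). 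Therefore $\sigma(c(T_n))=\bigcup_{k=1}^{n}\sigma(G[f_m](x_k))$; and since $\lambda_{\min}(G[f_m](\cdot))=\check{f_m}$ and $\lambda_{\max}(G[f_m](\cdot))=\hat{f_m}$ (Theorem~\ref{qhermitmatrayleightogfuncthm}(ii)) are even and $2\pi$-periodic, the $n$ grid points $x_k$ contribute the same union of spectra as the points $2\pi s/n$, $s=0,\dots,\lfloor n/2\rfloor$, which is the asserted identity. (For $n$ even one must account for the zero in the middle slot of $\strangfcoloftoep{T_n}$, which perturbs only the Nyquist value; this is the sole point where the parity of $n$ enters.)

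For (ii): by (i), $\lambda_{\min}(c(T_n))=\min_{0\le s\le m}\check{f_m}(2\pi s/n)$ and $\lambda_{\max}(c(T_n))=\max_{0\le s\le m}\hat{f_m}(2\pi s/n)$ with $m=\lfloor n/2\rfloor$, where $\check{f_m},\hat{f_m}$ denote the functions of Theorem~\ref{qhermitmatrayleightogfuncthm}(ii) formed from the trigonometric polynomial $f_m$. Absolute summability of $\{\eta(s)\}$ gives $f_m\to f$ uniformly on $[-\pi,\pi]$, hence $\phi_1(f_m)\to\phi_1(f)$, $\phi_2(f_m)\to\phi_2(f)$ uniformly, and since $(a,b,c)\mapsto\frac12\big(a+b-\sqrt{|a-b|^2+4c^2}\big)$ is (locally Lipschitz) continuous, $\check{f_m}\to\check f$ and $\hat{f_m}\to\hat f$ uniformly on $[0,\pi]$ — in particular $\check f,\hat f$ are continuous there. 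As $n\to\infty$ the grid $\{2\pi s/n:0\le s\le\lfloor n/2\rfloor\}$ fills $[0,\pi]$ densely, so $\min_{0\le s\le m}\check f(2\pi s/n)\to\min_{x\in[0,\pi]}\check f(x)$ by uniform continuity; combining with $|\min_s\check{f_m}(2\pi s/n)-\min_s\check f(2\pi s/n)|\le\|\check{f_m}-\check f\|_{\infty,[0,\pi]}\to 0$ yields $\lambda_{\min}(c(T_n))\to\min_{x\in[0,\pi]}\check f(x)$, and symmetrically $\lambda_{\max}(c(T_n))\to\max_{x\in[0,\pi]}\hat f(x)$.

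I expect the main obstacle to be the bookkeeping in part (i): one has to check carefully that $\overline{\phi_j(c(T_n))}$ is the circulant whose $F_{\unitp,n}$-eigenvalues are the conjugated and frequency-reversed eigenvalues of $\phi_j(c(T_n))$, because this is precisely what makes each $2\times 2$ block equal to $G[f_m]$ at a single grid point (rather than to the complex representation of one quaternion, whose eigenvalues would not even be real). Everything downstream — the explicit $2\times 2$ Hermitian eigenvalue formula, the even/periodic collapse of the union of spectra, continuity of $\check f,\hat f$, and the approximation argument for (ii) — is routine.
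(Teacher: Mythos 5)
Your proposal is correct and follows essentially the same route as the paper: pass to $\matcomplexify{c(T_n)}$, diagonalize its four $\mathbb{Q}_{(0,1)}$-circulant blocks simultaneously by $F_{\unitp,n}$, pair indices to obtain the $2\times 2$ Hermitian blocks $G[f_m]\left(\frac{2\pi s}{n}\right)$, fold the union over $s=0,\dots,n-1$ down to $s=0,\dots,m$, and obtain (ii) from uniform convergence of $f_m$ to $f$ plus density of the grid in $[0,\pi]$. The only cosmetic differences are that you perform the folding via evenness and periodicity of $\check{f}_m,\hat{f}_m$ where the paper exhibits explicit $2\times 2$ similarity transformations between $G[f_m](-x)$ and $G[f_m](x)$, and you are more explicit than the paper about the conjugation/frequency-reversal bookkeeping for the blocks $\overline{\phi_j(c(T_n))}$ (which is indeed exactly what puts $\phi_1(f_m)(-x_k)$ in the $(2,2)$ entry and makes each block equal to $G[f_m](x_k)$).
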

										
										\begin{proof}
											By construction, $c(T_n)$ is Hermitian as $T_n$ is Hermitian.
											\begin{align*}
												\matcomplexify{c(T_n)}=\matcmplxexpress{c(T_n)}=\left[\begin{array}[c]{cc}
													c(\phi_1(T_n))&c(-\phi_2(T_n))\\
													&\\
													c\left(\overline{\phi_2(T_n)}\right)&c\left(\overline{\phi_1(T_n)}\right)
												\end{array}\right].
											\end{align*}
											With the given $f$, we know that  		
											$$
											\Phi_1[f](x) = t_0 + \sum_{s=1}^{\infty} \phi_1(t_s) \exp( \unitp s x) + \phi_1( \bar{t}_s ) \exp( - \unitp s x),
											$$
											and 
											$$
											\Phi_2[f](x) = \sum_{s=1}^{\infty} \phi_2(\bar{t}_s) \exp( \unitp s x) + \phi_2( t_s ) \exp( - \unitp s x).
											$$
											Since $\phi_1(T_n)$ and $\phi_2(T_n)$ are Toeplitz matrices with entries 
											in $\mathbb{Q}_{(0,1)}$ which is isomorphic to the complex field,
											the corresponding circulant matrices 
											$c(\phi_1(T_n))$, $c(-\phi_2(T_n))$,
											$c\left(\overline{\phi_2(T_n)}\right)$ and $c\left(\overline{\phi_1(T_n)}\right)$
											can be diagonalized by $F_{\unitp,n}$. 
										\textcolor{black}{	Their eigenvalues
											are given by 
											$\Phi_1[f_m] \left(\frac{2\pi s}{n}\right)$, 
											$\Phi_2[f_m] \left(\frac{2\pi s}{n}\right)$, 
											$\overline{\Phi_2[f_m] \left(\frac{2\pi s}{n}\right)}$
											and 
											$\Phi_1[f_m] \left(-\frac{2\pi s}{n}\right)$ 
											($s=0,1,...,n-1$) respectively,
											see for instance \cite{chan1996conjugate,ng2004iterative,chan2007introduction}.
											Then, it is easy to see that $\matcomplexify{c(T_n)}$ is unitarily similar to the following $2n$-by-$2n$ matrix
											\begin{align*}
												\left[\begin{array}[c]{cc}
													{\rm diag}
													\left( \Phi_1[f_m] \left(\frac{2\pi s}{n}\right)\right)_{s=0}^{n-1}	&	
													{\rm diag}
													\left( \Phi_2[f_m] \left(\frac{2\pi s}{n}\right) \right)_{s=0}^{n-1}\\
													& \\
													{\rm diag}
													\left( \overline{\Phi_2[f_m] \left(\frac{2\pi s}{n}\right)} \right)_{s=0}^{n-1}	
													&
													{\rm diag}
													\left( \Phi_1[f_m] \left(-\frac{2\pi s}{n}\right) \right)_{s=0}^{n-1}
												\end{array}\right].
											\end{align*}
											The above matrix is of the diagonal block form, and its eigenvalues are equal to the eigenvalues of the 
											following matrices: 
											$$
											\left[\begin{array}[c]{cc}
												\Phi_1[f_m] \left(\frac{2\pi s}{n}\right)	&
												\Phi_2[f_m] \left(\frac{2\pi s}{n}\right)  \\
												& \\
												\overline{\Phi_2[f_m] \left(\frac{2\pi s}{n}\right)} 	&
												\Phi_1[f_m] \left(-\frac{2\pi s}{n}\right) 
											\end{array}\right]
											= G \left [ f_m \right]\left(\frac{2\pi s}{n}\right), \quad s=0,1,...,n-1.
											$$
												}
											Therefore, we have 
											\begin{equation}\label{cstnspeceq1}
												\sigma(c(T_n))=\bigcup_{s=0}^{n-1}\sigma\left( G\left[
												f_m \right] \left(\frac{2\pi s}{n}\right)\right).
											\end{equation}
											On the other hand, it is straightforward to derive
											\begin{align*}
												&G \left [ 
												f_m \right]
												\left(\frac{2\pi (n-s)}{n}\right)\\
												&=G\left[
												f_m\right] 
												\left(\frac{-2\pi s}{n}\right)\\
												&=\left[\begin{array}[c]{cc}
													\Phi_1[f_m]
													\left(-\frac{2\pi s}{n}\right)&
													\Phi_2[f_m]
													\left(-\frac{2\pi s}{n}\right)\\
													& \\
													\overline{\Phi_2[f_m]}
													\left(-\frac{2\pi s}{n}\right) &
													\Phi_1[f_m]
													\left(\frac{2\pi s}{n}\right)
												\end{array}\right]\\
												&=\left[\begin{array}[c]{cc}
													\Phi_1[f_m]
													\left(-\frac{2\pi s}{n}\right)&
													-\Phi_2[f_m]
													\left(\frac{2\pi s}{n}\right)\\
													& \\
													-\overline{\Phi_2[f_m]}
													\left(\frac{2\pi s}{n}\right) &
													\Phi_1[f_m]
													\left(\frac{2\pi s}{n}\right)
												\end{array}\right]\\
												&=\left[\begin{array}[c]{cc}
													&1\\
													1&
												\end{array}\right]^{-1}\underbrace{\left[\begin{array}[c]{cc}
														\Phi_1[f_m]
														\left(\frac{2\pi s}{n}\right)&
														-\overline{\Phi_2[f_m]}
														\left(\frac{2\pi s}{n}\right) \\
														& \\
														-\Phi_2[f_m]
														\left(\frac{2\pi s}{n}\right)&
														\Phi_1[f_m]
														\left(-\frac{2\pi s}{n}\right)
													\end{array}\right]}_{:=H_s}\left[\begin{array}[c]{cc}
													&1\\
													1&
												\end{array}\right], \quad s=1,...,n-1.
											\end{align*}
											By taking conjugate of $H_s$, one obtain that
											\begin{align*}
												\bar{H}_s&=\left[\begin{array}[c]{cc}
													\Phi_1[f_m]
													\left(\frac{2\pi s}{n}\right)&
													 -\Phi_2[f_m]
													 \left(\frac{2\pi s}{n}\right)\\
													& \\
													-\overline{\Phi_2[f_m]}
													\left(\frac{2\pi s}{n}\right)&
													\Phi_1[f_m]
													\left(-\frac{2\pi s}{n}\right)
												\end{array}\right]\\	
												&=
												\left[\begin{array}[c]{cc}
													-1&\\
													&1
												\end{array}\right]^{-1}
												\left[\begin{array}[c]{cc}
													\Phi_1[f_m] \left(\frac{2\pi s}{n}\right)	&
													\Phi_2[f_m] \left(\frac{2\pi s}{n}\right)  \\
													& \\
													\overline{\Phi_2[f_m] \left(\frac{2\pi s}{n}\right)} 	&
												\Phi_1[f_m] \left(-\frac{2\pi s}{n}\right)
												\end{array}\right]
												\left[\begin{array}[c]{cc}
													-1&\\
													&1
												\end{array}\right] \\
												&=\left[\begin{array}[c]{cc}
													-1&\\
													&1
												\end{array}\right]^{-1}G\left[f_{m}\right]\left(\frac{2\pi s}{n}\right)\left[\begin{array}[c]{cc}
													-1&\\
													&1
												\end{array}\right],\quad s=1,2,...,n-1.
											\end{align*}
												Note that $\sigma(\cdot)$ is right spectrum. Since $H_s$ is a Hermitian matrix, $\sigma(H_s)=\sigma(\bar{H}_s)$. Then, matrix similarities imply that
												\begin{equation*}
													\sigma\left(G\left[f_m\right]\left(\frac{2\pi (n-s)}{n}\right)\right)=\sigma(H_s)=\sigma(\bar{H}_s)=\sigma\left(G\left[
													f_{m}\right]\left(\frac{2\pi s}{n}\right)\right),\ s=1,2,...,n-1,
												\end{equation*}
												which together with \eqref{cstnspeceq1} implies that
												\begin{equation*}
													\sigma(c(T_n))= \bigcup_{s=0}^{m}\sigma\left(G\left[f_m\right]\left(\frac{2\pi s}{n}\right)\right),
												\end{equation*}
												where the set inclusion comes from the fact that $G\left[f_m\right]\left(x\right)$ is Hermitian in $\mathbb{Q}_{(0,1)}$ for each $x$.
												
												Since $\{t_s\}_{s\in\mathbb{N}^{+}}$ is absolutely summable, it is easy to check that
												\begin{equation*}
													\lim\limits_{n\rightarrow\infty}\sup\limits_{x\in[-\pi,\pi]}| f_{\lfloor\frac{n}{2}\rfloor}(x)-f(x)|=0.
												\end{equation*}
												With the convergence above and the proven result \bfrefnumber{i}, we show that
												\begin{align*}
													\lim\limits_{n\rightarrow\infty}\lambda_{\min}(c(T_n))&=\lim\limits_{n\rightarrow\infty}\min\limits_{0\leq s\leq 
														\lfloor\frac{n}{2}\rfloor}\lambda_{\min}\left(G\left[f_{\lfloor\frac{n}{2}\rfloor}\right]\left(\frac{2\pi s}{n}\right)\right)\\
													&=\lim\limits_{n\rightarrow\infty}\min\limits_{0\leq s\leq \lfloor\frac{n}{2}\rfloor}\lambda_{\min}\left(G\left[f\right]\left(\frac{2\pi s}{n}\right)\right)\\
													&=\lim\limits_{n\rightarrow\infty}\min\limits_{0\leq s\leq \lfloor\frac{n}{2}\rfloor}\check{f}\left(\frac{2\pi s}{n}\right)=\min\limits_{x\in[0,\pi]}\check{f}(x).
												\end{align*}
												Similarly, one can show that $\lim\limits_{n\rightarrow\infty}\lambda_{\max}(c(T_n))=\max\limits_{x\in[0,\pi]}\hat{f}(x)$. The proof is complete.
											\end{proof}
											
											\subsection{The Spectra of Preconditioned Matrices}
											
											In this subsection, we show the spectra of $c(T_n)^{-1} T_n$ is clustered around 1 under the assumption that the sequence $\{t_k\}_{k\in\mathbb{N}^{+}}$ is absolutely summable. 
											
											Referring to the analysis in \cite{ng2004iterative,chan2007introduction}, one can prove show the following lemma.
											\begin{lemma}\label{tnminuscstneigcontrllm}
												For any $\epsilon>0$, there exists  $n_0$ such that for all $n>n_0$, there exists $W_n,U_{(n,n_0)}\in\mathbb{Q}_{(0,1)}^{2n\times 2n}$ such that $\matcomplexify{T_n}-\matcomplexify{c(T_n)}=W_n+U_{(n,n_0)}$ with $||W_n||_2\leq \epsilon$ and ${\rm rank}(U_{(n,n_0)})\leq 4n_0$.
											\end{lemma}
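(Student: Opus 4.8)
The plan is to reduce the statement to the classical scalar estimate for Strang's circulant preconditioner, applied separately to each of the four Toeplitz blocks of $\matcomplexify{T_n}-\matcomplexify{c(T_n)}$. The first step is to observe that the operation $c(\cdot)$ is defined purely through the generating diagonals of a Toeplitz matrix, while $\phi_1(\cdot),\phi_2(\cdot)$ and conjugation act entrywise and $\mathbb{R}$-linearly; hence these operations commute with $c(\cdot)$, i.e. $\phi_l(c(T_n))=c(\phi_l(T_n))$ and $\overline{c(B)}=c(\overline{B})$ for any Toeplitz $B$. Consequently,
\begin{equation*}
\matcomplexify{T_n}-\matcomplexify{c(T_n)}=\left[\begin{array}[c]{cc}
\phi_1(T_n)-c(\phi_1(T_n))&-\phi_2(T_n)+c(\phi_2(T_n))\\
&\\
\overline{\phi_2(T_n)}-c(\overline{\phi_2(T_n)})&\overline{\phi_1(T_n)}-c(\overline{\phi_1(T_n)})
\end{array}\right],
\end{equation*}
a $2\times2$ block matrix each of whose blocks has the form $D_n-c(D_n)$ for a Toeplitz matrix $D_n\in\mathbb{Q}_{(0,1)}^{n\times n}$. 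Since $\mathbb{Q}_{(0,1)}$ is isomorphic to $\mathbb{C}$, and the diagonal sequence of each such $D_n$ is dominated entrywise in modulus by $\{t_s\}$, which is absolutely summable (as $t_s=\eta(s)$ for $s\ge1$, $t_0=\eta(0)$, $t_{-s}=\overline{\eta(s)}$ and $\{\eta(s)\}$ is absolutely summable), the four blocks fall exactly under the classical setting.

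The second step recalls the standard splitting for Strang's circulant in the scalar complex case (see \cite{ng2004iterative,chan2007introduction,chan1996conjugate}): given an absolutely summable symbol, for any $\delta>0$ choose $N=N(\delta)$ with $\sum_{|s|>N}|t_s|<\delta$; then for every $n>2N$ one writes $D_n-c(D_n)=W_n^{(D)}+U_n^{(D)}$. The matrix $D_n-c(D_n)$ is supported on the two corner triangles $|j-k|>\lfloor(n-1)/2\rfloor$, where its entries are of the form $d_{j-k}-d_{j-k\mp n}$. One puts into $W_n^{(D)}$ the untruncated part $d_{j-k}$ on these corner diagonals together with the wrapped contributions $d_{j-k\mp n}$ for which $|j-k\mp n|>N$; this is a matrix supported on partial diagonals of total absolute weight at most $\sum_{|s|>\lfloor(n-1)/2\rfloor}|d_s|+\sum_{|s|>N}|d_s|\le 2\delta$ once $n>2N$, hence $\|W_n^{(D)}\|_2\le 2\delta$. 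The remaining wrapped contributions $d_{j-k\mp n}$ with $|j-k\mp n|\le N$ form $U_n^{(D)}$; these are confined to two $N\times N$ triangular corner blocks, so $\mathrm{rank}(U_n^{(D)})\le 2N$.

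Finally I would recombine. Take $\delta=\epsilon/8$, let $N$ be the corresponding cutoff — one $N$ suffices for all four blocks, since each diagonal sequence is dominated by $\{t_s\}$ — and set $n_0:=2N$, which depends only on $\epsilon$ through the tail of $\{t_s\}$, not on $n$. For $n>n_0$, let $W_n$ and $U_{(n,n_0)}$ be the $2\times2$ block matrices assembled from the four pieces $W_n^{(D)}$ and $U_n^{(D)}$, respectively; both lie in $\mathbb{Q}_{(0,1)}^{2n\times2n}$ and sum to $\matcomplexify{T_n}-\matcomplexify{c(T_n)}$. Viewing each block as padded with zeros into a $2n\times2n$ matrix and using the triangle inequality for $\|\cdot\|_2$ gives $\|W_n\|_2\le\sum_{i=1}^{4}\|W_n^{(i)}\|_2\le 4\cdot2\delta=\epsilon$, while $\mathrm{rank}(U_{(n,n_0)})\le\sum_{i=1}^{4}\mathrm{rank}(U_n^{(i)})\le4\cdot 2N=4n_0$ (the constant $4$ simply accommodating the four blocks). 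The only point requiring genuine care is the bookkeeping inside the scalar splitting — pinning down exactly which entries of $D_n-c(D_n)$ contribute to the small-norm part and which must be absorbed into the fixed-rank corner blocks, and tracking the resulting constants so that the four blocks together yield rank at most $4n_0$; everything else is routine, and indeed one may simply quote the scalar statement from \cite{ng2004iterative,chan2007introduction}.
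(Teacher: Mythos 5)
Your proposal is correct and follows essentially the same route as the paper's Appendix~B proof: decompose $\matcomplexify{T_n}-\matcomplexify{c(T_n)}$ into four $\mathbb{Q}_{(0,1)}$-Toeplitz blocks whose diagonal sequences are dominated by $\{|t_s|\}$, and split each block into a low-rank corner piece plus a piece whose norm is controlled by the tail sum. The only differences are cosmetic: the paper carves out the low-rank part by zeroing the $(n-n_0)\times(n-n_0)$ leading principal submatrix (so each block's remainder lands in the last $n_0$ rows and columns) and bounds $\|W_n\|_2$ via $\rho(W_n)\le\|W_n\|_1$ using Hermitianness, whereas you take $N\times N$ corner blocks and sum diagonal weights, both yielding the same $\mathrm{rank}\le 4n_0$ and $\|W_n\|_2\le\epsilon$ conclusions.
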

											
											\begin{theorem}\label{strangcircprecedmatspectrumthm}\textnormal{(Spectra clustering of preconditioned matrix)}
												{\color{black}Suppose $\check{f}$ defined in Theorem \ref{qhermitmatrayleightogfuncthm} is a positive function. Denote $\check{f}_{\min}:=\min\limits_{x\in[-\pi,\pi]}\check{f}(x)>0$, $\hat{f}_{\max}:=\max\limits_{x\in[-\pi,\pi]}\hat{f}(x)>0$ with $\hat{f}$ defined in Theorem \ref{qhermitmatrayleightogfuncthm}. Then, for any $\epsilon\in(0,1)$, there exists $n_0>0$ such that for all $n>n_0$, it holds that  $c(T_n)^{-1}T_n$ has at most $4n_0$ many eigenvalues in $\sigma(c(T_n)^{-1}T_n)$ lying outside  $[1-\epsilon,1+\epsilon]$ } and that
												\begin{equation*}
													\sigma\left(c(T_n)^{-1}T_n\right)\subset \bigg[\frac{2\check{f}_{\min}}{3\hat{f}_{\max}},+\infty\bigg)
												\end{equation*}
											\end{theorem}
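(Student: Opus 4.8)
The plan is to transfer the problem to the $2n\times 2n$ complex Hermitian matrices $\mathcal{M}(T_n)$ and $\mathcal{M}(c(T_n))$ and to combine the small-norm-plus-low-rank splitting of Lemma~\ref{tnminuscstneigcontrllm} with classical Weyl-type perturbation bounds. First I would record the consequences of Lemma~\ref{strangcireiglimitlm}(ii): since $[0,\pi]\subset[-\pi,\pi]$ we have $\lim_{n}\lambda_{\min}(c(T_n))=\min_{[0,\pi]}\check f\ge\check f_{\min}>0$ and $\lim_{n}\lambda_{\max}(c(T_n))=\max_{[0,\pi]}\hat f\le\hat f_{\max}$, so there is $n_0^{(1)}$ with $\lambda_{\min}(c(T_n))>\tfrac23\check f_{\min}$ and $\lambda_{\max}(c(T_n))<\tfrac32\hat f_{\max}$ for all $n>n_0^{(1)}$. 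In particular $c(T_n)$ is HPD for such $n$, so $c(T_n)^{-1}T_n$ is similar, via $c(T_n)^{1/2}$, to the Hermitian (indeed HPD) matrix $S_n:=c(T_n)^{-1/2}T_nc(T_n)^{-1/2}$; hence $\sigma\big(c(T_n)^{-1}T_n\big)=\sigma(S_n)=\sigma(\mathcal{M}(S_n))\subset\mathbb{R}$, and, because $\mathcal{M}(\cdot)$ is multiplicative and commutes with taking inverses and Hermitian positive square roots, $\mathcal{M}(S_n)=\mathcal{M}(c(T_n))^{-1/2}\mathcal{M}(T_n)\mathcal{M}(c(T_n))^{-1/2}$.

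Next I would establish the lower inclusion. Writing a unit vector $\mathbf z\in\mathbb{Q}_{(0,1)}^{2n}$ as $\mathbf z=\mathcal{M}(c(T_n))^{1/2}\mathbf w$ and using $\sigma(T_n)=\sigma(\mathcal{M}(T_n))$ together with $\lambda_{\min}(T_n)\ge\essinf_x\check f(x)=\check f_{\min}$ from Theorem~\ref{qhermitmatrayleightogfuncthm}(ii), I obtain, for every $n>n_0^{(1)}$,
\[
\mathbf z^{*}\mathcal{M}(S_n)\mathbf z=\frac{\mathbf w^{*}\mathcal{M}(T_n)\mathbf w}{\mathbf w^{*}\mathcal{M}(c(T_n))\mathbf w}\ \ge\ \frac{\lambda_{\min}(T_n)}{\lambda_{\max}(c(T_n))}\ \ge\ \frac{\check f_{\min}}{\tfrac32\hat f_{\max}}=\frac{2\check f_{\min}}{3\hat f_{\max}}.
\]
By Courant--Fischer this yields $\lambda_{\min}(S_n)\ge 2\check f_{\min}/(3\hat f_{\max})$, hence $\sigma\big(c(T_n)^{-1}T_n\big)\subset\big[\,2\check f_{\min}/(3\hat f_{\max}),\,+\infty\big)$.

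For the clustering claim, fix $\epsilon\in(0,1)$ and apply Lemma~\ref{tnminuscstneigcontrllm} with tolerance $\epsilon':=\tfrac23\check f_{\min}\,\epsilon$: there is $\nu_0$ such that for $n>\nu_0$ one has $\mathcal{M}(T_n)-\mathcal{M}(c(T_n))=W_n+U_{(n,\nu_0)}$ with $\|W_n\|_2\le\epsilon'$ and $\mathrm{rank}(U_{(n,\nu_0)})\le4\nu_0$; since the left side is Hermitian, $W_n$ and $U_{(n,\nu_0)}$ may be taken Hermitian. Put $n_0:=\max(n_0^{(1)},\nu_0)$ and $P_n:=\mathcal{M}(c(T_n))^{-1/2}$. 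For $n>n_0$,
\[
\mathcal{M}(S_n)=I_{2n}+P_nW_nP_n+P_nU_{(n,\nu_0)}P_n=:I_{2n}+\widetilde W_n+\widetilde U_n,
\]
with $\widetilde W_n,\widetilde U_n$ Hermitian, $\|\widetilde W_n\|_2\le\|P_n\|_2^2\,\|W_n\|_2=\lambda_{\min}(c(T_n))^{-1}\|W_n\|_2<\tfrac{3}{2\check f_{\min}}\epsilon'=\epsilon$, and $\mathrm{rank}(\widetilde U_n)\le\mathrm{rank}(U_{(n,\nu_0)})\le4\nu_0\le4n_0$. All eigenvalues of $I_{2n}+\widetilde W_n$ lie in $[1-\epsilon,1+\epsilon]$. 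Splitting $\widetilde U_n=\widetilde U_n^{+}-\widetilde U_n^{-}$ into positive and negative semidefinite parts with $\mathrm{rank}(\widetilde U_n^{+})+\mathrm{rank}(\widetilde U_n^{-})\le4n_0$, and using the elementary fact that adding a Hermitian positive semidefinite matrix of rank $r$ can raise at most $r$ eigenvalues above any prescribed threshold (an eigenspace on which the sum strictly exceeds $1+\epsilon$ must meet $\ker\widetilde U_n^{+}$ only in $0$), one concludes that $\mathcal{M}(S_n)$ has at most $4n_0$ eigenvalues outside $[1-\epsilon,1+\epsilon]$. Since the eigenvalues of $\mathcal{M}(S_n)$ are those of $S_n$ each with doubled multiplicity, $c(T_n)^{-1}T_n$ has at most $4n_0$ eigenvalues outside $[1-\epsilon,1+\epsilon]$, which is the assertion.

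I have suppressed only routine facts: that $\mathcal{M}(\cdot)$ respects products, inverses and Hermitian square roots and that $\sigma(A)=\sigma(\mathcal{M}(A))$ for Hermitian $A$ (both already used earlier in the paper), and the one-line interlacing estimate for low-rank Hermitian perturbations. The step needing the most care is the simultaneous choice of $n_0$ so that the decomposition of Lemma~\ref{tnminuscstneigcontrllm} is available \emph{and} $\|\mathcal{M}(c(T_n))^{-1}\|_2$ and $\lambda_{\max}(c(T_n))$ are uniformly controlled through Lemma~\ref{strangcireiglimitlm}(ii); the remainder is bookkeeping with the $\mathcal{M}$-correspondence.
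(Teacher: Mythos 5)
Your proposal is correct and follows essentially the same route as the paper's proof in Appendix~\ref{proofofcircprecedmatspecclstthm}: uniform spectral bounds on $c(T_n)$ from Lemma~\ref{strangcireiglimitlm}, passage to $\matcomplexify{c(T_n)}^{-1/2}\matcomplexify{T_n}\matcomplexify{c(T_n)}^{-1/2}=I_{2n}+\tilde W_n+\tilde U_n$ via Lemma~\ref{tnminuscstneigcontrllm} with the tolerance rescaled by $\lambda_{\min}(c(T_n))^{-1}$, a Weyl-type low-rank perturbation count for the clustering, and a Rayleigh-quotient bound $\lambda_{\min}(T_n)/\lambda_{\max}(c(T_n))$ for the lower inclusion. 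The only deviations are cosmetic (your constant $\tfrac{2}{3}\check f_{\min}$ versus the paper's $\tfrac{1}{2}\check f_{\min}$ for the lower bound on $\lambda_{\min}(c(T_n))$, and the positive/negative semidefinite split of $\tilde U_n$ in place of the paper's direct citation of Weyl's inequalities), neither of which changes the argument.
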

											\begin{proof}
												See Appendix \ref{proofofcircprecedmatspecclstthm}.
											\end{proof}

											
											\subsection{The Circulant Preconditioned Conjugate Gradient Method}
										
										
										{\color{black}  The PCG method is an efficient and powerful iterative algorithm to solve HPD linear systems, that improves convergence rates compared to the standard Conjugate Gradient (CG) method by incorporating a preconditioner. The PCG solver using an HPD preconditioner $P\in\mathbb{Q}^{n\times n}$ for solving a general $n\times n$ HPD quaternion linear system
											\begin{equation}\label{generalhpdqsystem}
												A\mathbf{w}=\mathbf{z},
											\end{equation}
											follows the same framework as that used in solving HPD linear systems, which is described in detail in many relevant textbooks, such as \cite{ng2004iterative}. Hence we omit the details here. }

										%
										When applying PCG algorithm to solve \eqref{hpdqtsystem}, the dominant operation cost is on computing some matrix-vector multiplications of 
											form $P^{-1}\mathbf{z}_1$ and $T_n\mathbf{z}_2$ for some given vectors 
												$\mathbf{z}_1,\mathbf{z}_2\in\mathbb{Q}^{n\times 1}$ during each iteration. It has been proven in \cite{pan2024block} that any circulant quaternion matrix is fast block diagonalizable by means of fast Fourier transforms (FFTs), with each eigen-block being of size at most $2\times 2$. As a result, for any invertible circulant matrix $C\in\mathbb{Q}^{n\times n}$, the matrix vector product $C^{-1}\mathbf{x}$ can be fast computed within 
													$\mathcal{O}(n\log n)$ operations for any given vector $\mathbf{x}\in\mathbb{Q}^{n\times 1}$. Moreover, it is well-known that the matrix-vector $T_n\mathbf{y}$ for any given $\mathbf{y}\in\mathbb{Q}^{n\times 1}$ can be fast computed within $\operationcount{n\log n}$  by embedding the $n\times n$ Toeplitz matrix $T_n$ into a $2n\times 2n$ circulant matrix (see, e.g., \cite{ng2004iterative,chan2007introduction}). Therefore, when applying PCG algorithm with an HPD circulant matrix as preconditioner to solve \eqref{hpdqtsystem}, it requires $\operationcount{n\log n}$ operations in each iteration. 
													
													The complexity of PCG depends not only on the operation cost at each iteration but also on the number of iterations required for achieving a specific stopping criterion. Hence, the convergence rate of PCG solver is another important property that has to be investigated. {\color{black} We will discuss it in the following theorems. To start the discussion, we first need some notations. For any vector $\mathbf{x}$, we denote the $i$th component of $\mathbf{x}$ by $x{\bf (i)}$.	For any $\mathbf{x},\mathbf{y}\in\mathbb{K}^{n\times 1}$ 
															($\mathbb{K}=\mathbb{Q}$ or $\mathbb{Q}_{(0,1)}$), define 
															\begin{equation*}
																\innerprod{\mathbf{x}}{\mathbf{y}}:=\mathbf{y}^{*}\mathbf{x}=\sum\limits_{s=1}^{n}\overline{y(s)}x(s).
															\end{equation*}
																For any HPD matrix $A\in\mathbb{K}^{n\times 1}$, define
																\begin{equation}\label{bnormdef}
																	||\mathbf{x}||_{A}:=\innerprod{A\mathbf{x}}{\mathbf{x}}^{\frac{1}{2}},\quad \forall \mathbf{x}\in\mathbb{K}^{n\times 1}.
																\end{equation}
																Clearly, the above defined $||\cdot||_{B}$ is a vector norm on $\mathbb{K}^{n\times 1}$.
																
																Let  $\mathbf{w}^{(k)}~(k\geq 1)$  being the $k$th iterative solution to \eqref{generalhpdqsystem}, $\mathbf{w}^{(0)}$ being the initial guess and $\mathbf{w}$ being the exact solution  to \eqref{generalhpdqsystem}. It is known that when applying the PCG solver to linear system of equations in complex field, the step parameters are generated by the inner product of vectors. Similarly,  
																the step parameters of the PCG in quaternion algebra are also given by the inner product of quaternion vectors. Similar to the algorithm in complex filed, the solution of $k$-th iteration 
																${\bf w}^{(k)}$ is given by
																\begin{equation*}
																	\mathbf{w}^{(k)}=\mathbf{w}^{(0)}+\sum\limits^{k-1}_{s=0} \zeta_s(
																	{P^{-1}A})^sP^{-1}(\mathbf{z}-A\mathbf{w}^{(0)}),
																\end{equation*}
															\textcolor{black}{	where the coefficients $\zeta_s$ ($s=0,1,...$) are generated by Hermitian quadratic
																forms / inner products (e.g., ${\bf r}_k^*P^{-1}{\bf r}_k$ and ${\bf p}_k^*A{\bf p}_k$) with ${\bf r}_k$ and ${\bf p}_k$ being some  vectors computed in the iteration. As $P^{-1}$ and $A$ are both quaternionic Hermitian matrices,   $\zeta_s$ ($s=0,1,...$) must be real numbers.} Therefore,  we can estimate the error of the PCG solver in the following theorem.
																
																\begin{theorem}\label{qhpdpcgcvtthm}
																	\begin{description}
																		\item[(i)]
																		The PCG solver admits the iterative error estimation:
																		\begin{equation*}
																			||\mathbf{e}^{(k)}||_{A}\leq ||\mathbf{e}^{(0)}||_{A}\min\limits_{p_k\in\pi_k^1}\max\limits_{\lambda\in\sigma(P^{-1}A)}|p_k(\lambda)|,
																		\end{equation*}
																		where $\mathbf{e}^{(k)}:=\mathbf{w}^{(k)}-\mathbf{w}$ with $\mathbf{w}^{(k)}~(k\geq 1)$  being the $k$th iterative solution to \eqref{generalhpdqsystem}, $\mathbf{w}^{(0)}$ being the initial guess and $\mathbf{w}$ being the exact solution  to \eqref{generalhpdqsystem}. $p_k$ denotes polynomials of degree not larger than $k$; $\pi_k^1$ is the set of polynomials of degree not larger than $k$ such that $p_k(0)=1$.
																		\item[(ii)]
																		The PCG solver can find the exact solution withing at most $n$ iterations.
																	\end{description}
																\end{theorem}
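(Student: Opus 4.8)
The plan is to follow the classical Hestenes--Stiefel analysis of (P)CG, the only genuinely quaternionic points being that the step parameters $\zeta_s$ are \emph{real} and that $P^{-1}A$, though not Hermitian, is similar to a Hermitian quaternion matrix. For part (i), subtract $\mathbf{w}$ from the displayed formula for $\mathbf{w}^{(k)}$ and use $\mathbf{z}-A\mathbf{w}^{(0)}=A(\mathbf{w}-\mathbf{w}^{(0)})=-A\mathbf{e}^{(0)}$ to obtain
\[
\mathbf{e}^{(k)}=\Bigl(I-\sum_{s=0}^{k-1}\zeta_s(P^{-1}A)^{s+1}\Bigr)\mathbf{e}^{(0)}=p_k(P^{-1}A)\mathbf{e}^{(0)},
\]
where $p_k(t)=1-\sum_{s=0}^{k-1}\zeta_s t^{s+1}\in\pi_k^1$ has \emph{real} coefficients. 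The same computation shows that for every real $q\in\pi_k^1$ the vector $q(P^{-1}A)\mathbf{e}^{(0)}$ equals $\mathbf{v}-\mathbf{w}$ for a suitable $\mathbf{v}$ in the affine Krylov space $\mathbf{w}^{(0)}+\mathcal{K}_k$, where $\mathcal{K}_k:=\mathrm{span}_{\mathbb{Q}}\{(P^{-1}A)^jP^{-1}r^{(0)}\}_{j=0}^{k-1}$ and $r^{(0)}:=\mathbf{z}-A\mathbf{w}^{(0)}$. I then invoke the optimality of PCG: the step parameters are chosen so that the Galerkin condition $\innerprod{A\mathbf{e}^{(k)}}{\mathbf{v}}=0$ holds for all $\mathbf{v}\in\mathcal{K}_k$, which—by the same argument as over $\mathbb{R}$ or $\mathbb{C}$, using only $\mathbb{R}$-bilinearity of $\innerprod{\cdot}{\cdot}$ and the fact that $\|\cdot\|_A$ is a genuine norm (Lemma~\ref{hpdqmatproplm})—forces $\mathbf{w}^{(k)}$ to minimize $\|\mathbf{v}-\mathbf{w}\|_A$ over $\mathbf{w}^{(0)}+\mathcal{K}_k$. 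Hence $\|\mathbf{e}^{(k)}\|_A\le\|q(P^{-1}A)\mathbf{e}^{(0)}\|_A$ for every real $q\in\pi_k^1$.

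Next I diagonalize $P^{-1}A$. Taking a factorization $A=LL^{*}$ with $L$ invertible (available for any HPD quaternion matrix via Cholesky or the positive square root), $L^{*}(P^{-1}A)L^{-*}=L^{*}P^{-1}L$ is Hermitian, so by the spectral theorem for Hermitian quaternion matrices (real right spectrum and quaternion-unitary diagonalization, supplied by Lemma~\ref{hpdqmatproplm} together with the complex representation $\matcomplexify{\cdot}$) we may write $L^{*}P^{-1}L=VDV^{*}$ with $V$ unitary and $D=\diag(\lambda_1,\dots,\lambda_n)$ real. Then $P^{-1}A=QDQ^{-1}$ with $Q:=L^{-*}V$ satisfying $Q^{*}AQ=I$, so the columns $Q_1,\dots,Q_n$ form an $A$-orthonormal right eigenbasis and $\sigma(P^{-1}A)=\{\lambda_1,\dots,\lambda_n\}\subset\mathbb{R}$. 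Since $q$ has real coefficients and each $\lambda_j$ is real, $q(P^{-1}A)Q_j=Q_j\,q(\lambda_j)$ with $q(\lambda_j)\in\mathbb{R}$; expanding $\mathbf{e}^{(0)}=\sum_jQ_jc_j$ and using $A$-orthonormality,
\[
\|q(P^{-1}A)\mathbf{e}^{(0)}\|_A^2=\sum_{j=1}^{n}|q(\lambda_j)|^2|c_j|^2\le\Bigl(\max_{\lambda\in\sigma(P^{-1}A)}|q(\lambda)|\Bigr)^{2}\|\mathbf{e}^{(0)}\|_A^{2}.
\]
Combined with the previous paragraph this yields the stated bound with the minimum over real $q\in\pi_k^1$. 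Finally, since $\sigma(P^{-1}A)\subset\mathbb{R}$, replacing the coefficients of an arbitrary $p_k\in\pi_k^1$ by their real parts preserves $p_k(0)=1$ and does not increase $|p_k(\lambda)|$ at any real $\lambda$ (the real and pure-quaternion parts of $p_k(\lambda)$ are orthogonal), so the minimum over $\pi_k^1$ equals the minimum over real $\pi_k^1$; this proves (i).

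For part (ii): as $A$ and $P$ are HPD, $P^{-1}A$ is invertible, so its $m\le n$ distinct eigenvalues $\mu_1,\dots,\mu_m$ are nonzero real numbers. The polynomial $p_m(t)=\prod_{i=1}^m(1-t/\mu_i)$ lies in $\pi_m^1\subseteq\pi_n^1$ and makes $\max_{\lambda\in\sigma(P^{-1}A)}|p_m(\lambda)|=0$, so (i) applied with $k=m$ gives $\|\mathbf{e}^{(m)}\|_A=0$, i.e. $\mathbf{w}^{(m)}=\mathbf{w}$; in particular the exact solution is reached within at most $n$ iterations (and if the recursion breaks down earlier, the solution has already been found).

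The main obstacle is not an estimate but the structural transfer to the noncommutative setting. One must: (a) confirm that the quaternion PCG recursion genuinely produces \emph{real} step parameters, so that $p_k$ has real coefficients and $p_k(P^{-1}A)$ acts on each right eigenvector by a real scalar, which then commutes harmlessly through the quaternionic coordinates $c_j$; (b) port the Galerkin/energy-norm optimality of CG and the spectral theorem for Hermitian matrices to quaternion Hilbert space, for which Lemma~\ref{hpdqmatproplm} and the complex representation $\matcomplexify{\cdot}$ provide the ingredients; and (c) keep careful track of left-versus-right multiplication (right eigenvalues $A\mathbf{z}=\mathbf{z}\lambda$, right scalars on Krylov vectors) so that the polynomial identities remain valid. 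Each step is routine once correctly phrased, but the bookkeeping forced by noncommutativity is where the care is needed.
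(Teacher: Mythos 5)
Your proof is correct and follows essentially the same route the paper intends: the paper's own ``proof'' merely observes that $P^{-1}A$ is similar to a Hermitian quaternion matrix with at most $n$ real right eigenvalues and then defers to the classical complex-field PCG convergence argument, which is exactly the argument you carry out in detail (your symmetrization via $A=LL^{*}$ versus the paper's $P^{-1/2}AP^{-1/2}$ is an immaterial variation). Your write-up additionally supplies the quaternion-specific bookkeeping the paper skips --- real step parameters, $A$-orthonormal right eigenbasis, and the reduction to real-coefficient polynomials --- all of which is sound.
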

																\begin{proof}
																	Note that $\sigma(P^{-1}A)=\sigma(P^{-1/2}AP^{-1/2})$. Since $P^{-1/2}AP^{-1/2}$ is Hermitian, Lemma \ref{hpdqmatproplm} \bfrefnumber{ii} implies that $|\sigma(P^{-1/2}AP^{-1/2})|\leq n$, where $|\cdot|$ denotes the cardinality of a set. Then, the proof of Theorem \ref{qhpdpcgcvtthm} is similar to the proof of convergence of PCG algorithm for complex Hermitian positive definite system, see, e.g., \cite{axelsson1986rate}. We skip the proof here.
																\end{proof}
																
																\begin{theorem}\label{preparedsuplinearpcgthm}
																	
																	Let $\{\mu_s\}_{s=1}^{n}$ (counting multiplicity) in $\sigma(P^{-1}A)$ be ordered as 
																	\begin{equation*}
																		0<\mu_1\leq \cdots\leq \mu_{l_1}\leq b_1\leq \mu_{l_1+1}\leq \cdots\leq \mu_{n-l_2}\leq b_2\leq \mu_{n-l_2+1}\leq\cdots\leq\mu_n.
																	\end{equation*}
																	Then, the PCG solver admits the following iterative error estimation
																	\begin{equation*}
																		||\mathbf{e}^{(k)}||_{A}\leq 2||\mathbf{e}^{(0)}||_{A}\left(\frac{\sqrt{(b_2/b_1)}-1}{\sqrt{(b_2/b_1)}+1}\right)^{k-l_1-l_2}\prod\limits_{s=1}^{l_1}\left(\frac{b_2-\mu_s}{\mu_s}\right),\quad k\geq l_1+l_2,
																	\end{equation*}
																	where $\mathbf{e}^{(k)}$'s  are defined in Theorem \ref{qhpdpcgcvtthm}.
																\end{theorem}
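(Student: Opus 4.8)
The plan is to invoke the polynomial error bound of Theorem~\ref{qhpdpcgcvtthm}(i) with a judiciously chosen competitor polynomial that annihilates the $l_1+l_2$ outlying eigenvalues and is uniformly small on the bulk interval $[b_1,b_2]$. Since $P^{-1}A$ is similar to the HPD matrix $P^{-1/2}AP^{-1/2}$, Lemma~\ref{hpdqmatproplm}\bfrefnumber{ii} (together with the hypothesis $0<\mu_1$) gives $\sigma(P^{-1}A)\subset\mathbb{R}^{+}$, so the whole question reduces to a min-max problem over a finite set of positive reals, exactly as in the real/complex setting. For $k\ge l_1+l_2$ put $m:=k-l_1-l_2$ and define
\begin{equation*}
p_k(\lambda):=\left(\prod_{s=1}^{l_1}\frac{\mu_s-\lambda}{\mu_s}\right)\left(\prod_{s=n-l_2+1}^{n}\frac{\mu_s-\lambda}{\mu_s}\right)q_m(\lambda),
\end{equation*}
where $q_m$ is the degree-$m$ Chebyshev polynomial of the first kind affinely rescaled to equi-oscillate on $[b_1,b_2]$ and normalized by $q_m(0)=1$. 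Then $p_k$ has degree exactly $l_1+l_2+m=k$ and $p_k(0)=1$, i.e.\ $p_k\in\pi_k^1$, so it is admissible in Theorem~\ref{qhpdpcgcvtthm}(i).

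Next I would estimate $|p_k(\lambda)|$ over $\lambda\in\sigma(P^{-1}A)$. If $\lambda$ is one of the outliers $\mu_1,\dots,\mu_{l_1}$ or $\mu_{n-l_2+1},\dots,\mu_n$, then one of the two products vanishes and $p_k(\lambda)=0$. If $\lambda$ lies in the bulk, i.e.\ $\lambda\in[b_1,b_2]$, I bound the three factors separately: for a small outlier index $s\le l_1$ one has $0<\mu_s\le b_1\le\lambda\le b_2$, hence $|\mu_s-\lambda|/\mu_s=(\lambda-\mu_s)/\mu_s\le(b_2-\mu_s)/\mu_s$; for a large outlier index $s\ge n-l_2+1$ one has $\mu_s\ge b_2\ge\lambda\ge b_1>0$, hence $0\le(\mu_s-\lambda)/\mu_s=1-\lambda/\mu_s<1$, so these factors only help; and the standard scaled-Chebyshev estimate gives $\max_{\lambda\in[b_1,b_2]}|q_m(\lambda)|\le 2\big((\sqrt{b_2/b_1}-1)/(\sqrt{b_2/b_1}+1)\big)^{m}$. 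Multiplying the three bounds yields
\begin{equation*}
\max_{\lambda\in\sigma(P^{-1}A)}|p_k(\lambda)|\le 2\left(\frac{\sqrt{b_2/b_1}-1}{\sqrt{b_2/b_1}+1}\right)^{k-l_1-l_2}\prod_{s=1}^{l_1}\frac{b_2-\mu_s}{\mu_s},
\end{equation*}
and plugging this into Theorem~\ref{qhpdpcgcvtthm}(i) gives exactly the asserted bound on $||\mathbf{e}^{(k)}||_{A}$.

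There is no deep obstacle here: the quaternionic structure has already been fully absorbed by Theorem~\ref{qhpdpcgcvtthm}, and what remains is the classical superlinear-convergence argument for PCG. The only points deserving care are the degree bookkeeping (which forces the restriction $k\ge l_1+l_2$ so that $m\ge0$ and $\deg p_k=k$) and recalling, or briefly re-deriving, the bound $T_m\big(\tfrac{b_2+b_1}{b_2-b_1}\big)\ge\tfrac12\big(\tfrac{\sqrt{b_2/b_1}+1}{\sqrt{b_2/b_1}-1}\big)^m$ on the Chebyshev normalizing constant, from which the scaled-Chebyshev estimate above follows together with $q_m(0)=1$. If a self-contained write-up is desired, this Chebyshev estimate is the single nonroutine ingredient to include.
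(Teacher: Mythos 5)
Your proposal is correct and follows exactly the route the paper intends: the paper's proof of this theorem simply defers to the classical estimate of $\min_{p_k\in\pi_k^1}\max_{\lambda\in\sigma(P^{-1}A)}|p_k(\lambda)|$ in the cited reference of Axelsson, and your competitor polynomial (linear factors annihilating the $l_1+l_2$ outliers times a scaled Chebyshev polynomial on $[b_1,b_2]$ normalized at $0$) together with the three factor-wise bounds is precisely that argument, with the quaternionic content already absorbed by Theorem~\ref{qhpdpcgcvtthm}. The only difference is that you write out in full what the paper leaves to the citation.
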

																
																\begin{proof}
																	The proof is based on estimation of $\min\limits_{p_k\in\pi_k^1}\max\limits_{\lambda\in\sigma(P^{-1}A)}|p_k(\lambda)|$, which can be found in \cite{axelsson1986rate}.
																\end{proof}

																\begin{theorem}\label{strangcircmainthm}\textnormal{(super linear convergence)}
																	Suppose $\check{f}$ is a positive function. Then, for any $\epsilon\in(0,1)$, there exists a constant $C(\epsilon)>0$ and $n_0>0$ such that for  all $n>n_0$, the PCG solver with Strang's circulant preconditioner $c(T_n)$for solving the HPD
																	quaternion system \eqref{hpdqtsystem} admits the following iterative error estimation
																	\begin{equation*}
																		||e^{(k)}||_{T_n}\leq C(\epsilon)\epsilon^{k-4n_0}||e^{(0)}||_{T_n},\quad k\geq 4n_0,
																	\end{equation*}
																	where $\mathbf{e}^{(k)}:=\mathbf{u}^{(k)}-\mathbf{u}$ with $\mathbf{u}^{(k)}~(k\geq 1)$ being the $k$-th iterative solution to \eqref{hpdqtsystem}, $\mathbf{u}^{(0)}$ being the initial guess and $\mathbf{u}$ being the exact solution  to \eqref{generalhpdqsystem}.
																\end{theorem}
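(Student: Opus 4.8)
The plan is to obtain the stated bound as a direct consequence of the refined PCG error estimate of Theorem~\ref{preparedsuplinearpcgthm} combined with the spectral clustering result of Theorem~\ref{strangcircprecedmatspectrumthm}, after a small amount of bookkeeping to put the two $n$-dependent factors into the form $C(\epsilon)\epsilon^{k-4n_0}$. I would first dispose of the well-posedness issues. Since $\check{f}$ is positive, Theorem~\ref{qhermitmatrayleightogfuncthm}(ii) gives $\lambda_{\min}(T_n)\ge\essinf\limits_{x\in[-\pi,\pi]}\check{f}(x)>0$, so $T_n$ is HPD for every $n$ and the norm $\|\cdot\|_{T_n}$ of \eqref{bnormdef} is legitimate; likewise Lemma~\ref{strangcireiglimitlm}(ii) gives $\lim\limits_{n\rightarrow\infty}\lambda_{\min}(c(T_n))=\min\limits_{x\in[0,\pi]}\check{f}(x)>0$, so by enlarging $n_0$ if necessary we may assume $c(T_n)$ is HPD for all $n>n_0$. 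Then $c(T_n)^{-1}T_n$ is similar to the HPD matrix $c(T_n)^{-1/2}T_nc(T_n)^{-1/2}$, so its spectrum $0<\mu_1\le\cdots\le\mu_n$ is real and positive and the PCG iteration with preconditioner $P=c(T_n)$ is meaningful.

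Next, fix $\epsilon\in(0,1)$ and invoke Theorem~\ref{strangcircprecedmatspectrumthm} with this $\epsilon$: enlarging $n_0$ once more, for every $n>n_0$ we have $\mu_s\ge\delta:=\frac{2\check{f}_{\min}}{3\hat{f}_{\max}}>0$ for all $s$, and at most $4n_0$ of the $\mu_s$ lie outside $[1-\epsilon,1+\epsilon]$. Put $b_1=1-\epsilon$, $b_2=1+\epsilon$, let $l_1$ be the number of $\mu_s$ with $\mu_s<b_1$ and $l_2$ the number with $\mu_s>b_2$, so that $l_1+l_2\le 4n_0$ and the ordering hypothesis of Theorem~\ref{preparedsuplinearpcgthm} is satisfied. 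Applying that theorem with $A=T_n$, $P=c(T_n)$ yields, for $k\ge l_1+l_2$,
\begin{equation*}
\|\mathbf{e}^{(k)}\|_{T_n}\le 2\|\mathbf{e}^{(0)}\|_{T_n}\left(\frac{\sqrt{b_2/b_1}-1}{\sqrt{b_2/b_1}+1}\right)^{k-l_1-l_2}\prod_{s=1}^{l_1}\frac{b_2-\mu_s}{\mu_s}.
\end{equation*}

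It then remains to estimate the two remaining factors. For the geometric factor I would verify the elementary inequality $\frac{\sqrt{(1+\epsilon)/(1-\epsilon)}-1}{\sqrt{(1+\epsilon)/(1-\epsilon)}+1}\le\epsilon$ for all $\epsilon\in(0,1)$: writing $t=\sqrt{(1+\epsilon)/(1-\epsilon)}>1$ and $\epsilon=(t^2-1)/(t^2+1)$, the inequality reduces, after dividing by $t-1>0$, to $t^2+1\le(t+1)^2$, which holds since $t>0$. Because $0<\epsilon<1$ and $k\ge 4n_0\ge l_1+l_2$, this gives $\left(\frac{\sqrt{b_2/b_1}-1}{\sqrt{b_2/b_1}+1}\right)^{k-l_1-l_2}\le\epsilon^{k-l_1-l_2}\le\epsilon^{k-4n_0}$. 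For the product, each factor obeys $0<\frac{b_2-\mu_s}{\mu_s}=\frac{b_2}{\mu_s}-1\le\frac{1+\epsilon}{\delta}-1<\frac{2}{\delta}$, and there are $l_1\le 4n_0$ of them, so $\prod_{s=1}^{l_1}\frac{b_2-\mu_s}{\mu_s}\le\left(1+\frac{2}{\delta}\right)^{4n_0}$, a constant depending only on $\epsilon$ (through $n_0$) and on $\check{f},\hat{f}$ (through $\delta$). Setting $C(\epsilon):=2\left(1+\frac{2}{\delta}\right)^{4n_0}$ completes the argument.

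The substance of this theorem is already carried by Theorems~\ref{strangcircprecedmatspectrumthm} and~\ref{preparedsuplinearpcgthm}, so I do not expect a serious obstacle; the only points needing care are the passage from the Chebyshev-type linear rate to the target rate $\epsilon$ (the inequality above and the legitimacy of lowering the exponent from $k-l_1-l_2$ to $k-4n_0$, which uses $\epsilon<1$) and confirming that $c(T_n)$ is genuinely HPD for $n$ large so that both the preconditioner and the norm $\|\cdot\|_{T_n}$ make sense.
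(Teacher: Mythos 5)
Your proposal is correct and follows essentially the same route as the paper's own proof in Appendix D: combine the eigenvalue clustering and lower bound of Theorem \ref{strangcircprecedmatspectrumthm} with the two-sided PCG estimate of Theorem \ref{preparedsuplinearpcgthm} using $b_1=1-\epsilon$, $b_2=1+\epsilon$, then bound the Chebyshev factor by $\epsilon$ and absorb the outlier product into $C(\epsilon)$. The only differences are cosmetic (a different algebraic verification of the elementary inequality and the slightly more generous constant $2(1+2/\delta)^{4n_0}$, which safely handles extending the product from $l_1$ to $4n_0$ factors).
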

																\begin{proof}
																	See Appendix \ref{proofofcircspcvgmainthm}.
																\end{proof}
																
																\section{Numerical Experiments}
																	In this section, we test examples \ref{ar1} and \ref{ar2} arising from quaternion signal processing as outlined in Subsection \ref{qsignalsubsec} by the PCG algorithm.  
																	In practice, $\eta(\cdot)$ is usually unknown, which means the evaluation of \textcolor{black}{$T_n$} and $\mathbf{w}$ given in \eqref{hermqtsystem} may not be practical. In general, samplings of a discrete-time signal are available, which is useful in approximate evaluation of $T_n$ and \textcolor{black}{$\mathbf{w}$}. There are several types of well-known windowing methods with samplings of the signal, such as, correlation, covariance, pre-windowed and post-windowed methods, see, e.g., \cite{ng1994fastiterative,ng2004iterative}. Let $\mathbf{x}_1,\mathbf{x}_2,...,\mathbf{x}_n,\mathbf{x}_{n+1},...,\mathbf{x}_{M}$ be $M$ ($M>n$) many successive samplings of the signal $x(t)$. Now that value of $\mathcal{E}\left(x(t)\overline{x(t+s)}\right)=\eta(s)$ is independent of $t$. Following the correlation windowing method, we use the following sample mean $\tilde{\eta}(s)$ to approximate the expectation  value $\eta(s)$:
																	\begin{equation*}
																		\eta(s)\approx\tilde{\eta}(s):=\frac{1}{M}\sum\limits_{l=1}^{M-s}\mathbf{x}_l\bar{\mathbf{x}}_{l+s},\quad s=0,1,...,n.
																	\end{equation*}
																	{\color{black} With the computed $\tilde{\eta}(s)$ $(s=0,1,...,n)$, we obtain another Hermitian quaternion Toeplitz linear system \eqref{apphermqtsystem} to approximate  \eqref{hermqtsystem},}
																	\begin{equation}\label{apphermqtsystem}
																		\tilde{H}\tilde{\mathbf{v}}=\tilde{\mathbf{w}},
																	\end{equation}
																	where
																	\begin{align*}
																		&\tilde{H}:=\left[
																		\begin{array}
																			[c]{ccccc}
																			\tilde{\eta}(0)  &  \overline{\tilde{\eta}(1)} &\ldots   & \overline{\tilde{\eta}(n-2)}   & \overline{\tilde{\eta}(n-1)} \\
																			\tilde{\eta}(1 ) &  \tilde{\eta}(0) & \overline{\tilde{\eta}(1)}    &\ddots & \overline{\tilde{\eta}(n-2)} \\
																			\vdots&\ddots &\ddots&\ddots&\vdots\\
																			\tilde{\eta}(n-2)  & \ddots& \tilde{\eta}(1) &  \tilde{\eta}(0)  & \overline{\tilde{\eta}(1)}  \\
																			\tilde{\eta}(n-1)  &  \tilde{\eta}(n-2) & \ldots &  \tilde{\eta}(1)  &  \tilde{\eta}(0) 
																		\end{array}
																		\right],\\
																		&\tilde{\mathbf{w}}:=\left[\tilde{\eta}(1),\tilde{\eta}(2),..., \tilde{\eta}(n)\right]^{\rm T}.
																	\end{align*}
																	On the other hand, $\tilde{H}$ can be rewritten as $\tilde{H}=\frac{1}{M}\tilde{T}^{*}\tilde{T}$ with 
																	\begin{equation*}
																		\tilde{T}:=\left[\begin{array}[c]{ccc}
																			\bar{x}_{1}&&\\
																			\vdots&\ddots&\\
																			\bar{x}_{n}&\ldots&\bar{x}_{1}\\
																			\vdots&\ddots&\vdots\\
																			\vdots&\ddots&\vdots\\
																			\bar{x}_{M}&\ldots&\bar{x}_{M-n+1}\\
																			&\ddots&\vdots\\
																			&&\bar{x}_{M}
																		\end{array}\right]\in\mathbb{Q}^{(M+n-1)\times n}.
																	\end{equation*}
																	Hence, if $\tilde{T}$ is a full rank matrix, then $\tilde{H}$ is an HPD quaternion matrix. One can then apply the PCG  solver with the circulant preconditioner to solve the linear system \eqref{apphermqtsystem}.
																	
																	{\bf Settings:} In the experiments, zero initial guess is taken for the PCG algorithm; the stopping criterion of PCG algorithm is 
																	set as $||\hat{\mathbf{r}}^{(k)}||_2\leq 10^{-7}||\hat{\mathbf{r}}^{(0)}||_2$, where $\hat{\mathbf{r}}^{(k)}$ denotes the residual vector at $k$-th PCG iteration and $\hat{\mathbf{r}}_0$ denotes the initial residual vector. 
																	To demonstrate the effectiveness of Strang's circulant preconditioner on quaternionic Toeplitz systems, we test the PCG algorithm with the Strang's circulant preconditioner and unpreconditioned conjugate gradient algorithm in the lateral experiments and compare their performance. For ease of statement, we use  PCG-${\bf C}$ (CG, resp.) to represent the PCG algorithm with Strang's circulant preconditioner (unpreconditioned conjugate gradient solver, resp.).  We shall use `CPU' to represent the computational time; and  `Iter' for the iteration numbers of PCG solvers. To quantify the accuracy of PCG solvers, we  define error measure of iterative solution as
																	\begin{equation*}
																		{\rm Error}:=||\mathbf{b}-T_n\mathbf{u}^{(k)}||_{2},
																	\end{equation*}
																	where $\mathbf{u}^{(k)}$ denotes some iterative solution to \eqref{hpdqtsystem} or \eqref{apphermqtsystem}.
																	
																		In the following, we present the results of  Examples \ref{ar1} and \ref{ar2} by applying the PCG solvers and list the results in Tables \ref{ar1exacttable}-\ref{ar2apptable}. From these tables, we see that  (i) PCG-${\bf C}$ is more efficient than CG in terms of iteration number and computational time; (ii) PCG-${\bf C}$ is  more accurate than CG in terms of Error. 
																		The better performance of PCG-${\bf C}$ compared with that of CG demonstrates the effectiveness of the circulant preconditioner.
																		Such performance of circulant preconditioner for quaternion Hermitian Toeplitz system is consistent with what we have usually observed on circulant preconditioning for complex Hermitian Toeplitz system.
																		\begin{table}[h]
																			\caption{Numerical results of  iterative solutions of \eqref{hermqtsystem} arising from Example \ref{ar1}}\label{ar1exacttable}
																			\renewcommand{\arraystretch}{1.1}
																			\setlength{\tabcolsep}{0.7em}
																				\hspace{-6mm}
																				\begin{tabular}{c|c|ccc|ccc}
																					\hline
																					\multirow{2}{*}{$\beta$}&\multirow{2}{*}{$n$} &\multicolumn{3}{c|}{PCG-${\bf C}$}&\multicolumn{3}{c}{CG} \\    
																					\cline{3-8}
																					&&$\mathrm{Iter}$&$\mathrm{CPU}$&Error&$\mathrm{Iter}$&$\mathrm{CPU}$&Error\\
																					\hline													
																					\multirow{4}{*}{0.45-0.01\unitp+0.3\unitq-0.35\unitr}		
																					& $2^8$	&	3	&	0.075	&	8.83e-15	&	41	&	0.219 &	9.35e-8	\\	
																					& $2^9$	&	3	&	0.127	&	9.77e-15	&	41	&	0.225 &	9.35e-8	\\	
																					& $2^{10}$	&	3	&	0.250	&	7.89e-15	&	41	&0.351 &	9.35e-8	\\	
																					& $2^{11}$	& 3	&	0.481	&	8.76e-15	&	41	&0.628	 &	9.35e-8	\\	
																					\hline
																					\multirow{4}{*}{-0.07+0.41\unitp+0.29\unitq+0.45\unitr}	
																					& $2^8$	&	3	&	0.068	&	1.98e-14	&	48	&	0.176 &	7.78e-8	\\	
																					& $2^9$	&	3	&	0.127	&	2.23e-14	&	48	&	0.228 &	7.83e-8	\\	
																					& $2^{10}$	&	3	&	0.241	&	2.38e-14	&	48	&0.354 &	7.83e-8	\\	
																					& $2^{11}$	& 3	&	0.490	&	2.05e-14	&	48	&0.634	 &	7.83e-8	\\	
																					\hline
																					\multirow{4}{*}{0.15-0.46\unitp+0.34\unitq+0.43\unitr}	
																					& $2^8$	&	3	&	0.069	&	5.44e-14	&	57	&	0.177 &	6.55e-8	\\	
																					& $2^9$	&	3	&	0.124	&	5.57e-14	&	60	&	0.251 &	9.61e-8	\\	
																					& $2^{10}$	&	3	&	0.244	&	5.20e-14	&	60	&0.374 &	9.61e-8	\\	
																					& $2^{11}$	& 3	&	0.485	&	5.78e-14	&	60	&0.657	 &	9.61e-8	\\	
																					\hline
																				\end{tabular}
																		\end{table}

																		\begin{table}[h]
																			\caption{Numerical results of  iterative solutions of \eqref{apphermqtsystem} arising from Example \ref{ar1} with $M=mn+1$}\label{ar1apptable}
																			\renewcommand{\arraystretch}{1.1}
																			\setlength{\tabcolsep}{0.7em}
																			\begin{center}
																				\begin{tabular}{c|c|c|ccc|ccc}
																					\hline
																					\multirow{2}{*}{$\beta$}&\multirow{2}{*}{$m$}&\multirow{2}{*}{$n$} &\multicolumn{3}{c|}{PCG-${\bf C}$}&\multicolumn{3}{c}{CG} \\    
																					\cline{4-9}
																					&&&$\mathrm{Iter}$&$\mathrm{CPU}$&Error&$\mathrm{Iter}$&$\mathrm{CPU}$&Error\\
																					\hline													
																					\multirow{9}{*}{0.1+0\unitp-0.3\unitq-0.4\unitr}		
																					&\multirow{3}{*}{$2^2$}& $2^9$	&	27	&	0.183	&	6.85e-8	&	53	&	0.243 &	7.57e-8	\\	
																					&& $2^{10}$	&	29	&	0.332	&	7.05e-8	&	61	&	0.461 &	7.72e-8	\\	
																					&& $2^{11}$	&	41	&	0.648	&	8.89e-8	&	63	&0.839 &	7.36e-8	\\	
																					\cline{2-9}
																					&\multirow{3}{*}{$2^3$}& $2^{9}$	& 17	&	0.174	&	9.57e-8	&	45	&0.216	 &	9.67e-8	\\	
																					&& $2^{10}$	&	19	&	0.296	&	4.31e-8	&	48	&	0.353 &	9.53e-8	\\	
																					&& $2^{11}$	&	20	&	0.551	&	4.77e-8	&	51	&0.652 &	7.50e-8	\\	
																					\cline{2-9}
																					&\multirow{3}{*}{$2^4$}& $2^{9}$	& 13	&	0.146	&	9.54e-8	&	36	&0.194	 &	9.95e-8	\\	
																					&& $2^{10}$	&	14	&	0.280	&	4.50e-8	&	39	&	0.350 &	6.89e-8	\\	
																					&& $2^{11}$	&	14	&	0.521	&	5.22e-8	&	41	&0.597 &	7.58e-8	\\	
																					\hline
																					\multirow{9}{*}{0.3+0.4\unitp+0\unitq+0.4\unitr}		
																					&\multirow{3}{*}{$2^2$}& $2^9$	&	18	&	0.166	&	8.75e-8	&	64	&	0.257 &	7.54e-8	\\	
																					&& $2^{10}$	&	19	&	0.287	&	8.28e-8	&	67	&	0.394 &	7.82e-8	\\	
																					&& $2^{11}$	&	20	&	0.534	&	5.86e-8	&	68	&0.684 &	9.00e-8	\\	
																					\cline{2-9}
																					&\multirow{3}{*}{$2^3$}& $2^{9}$	& 14	&	0.146	&	8.46e-8	&	55	&0.229	 &	7.12e-8	\\	
																					&& $2^{10}$	&	14	&	0.263	&	5.85e-8	&	59	&	0.367 &	9.07e-8	\\	
																					&& $2^{11}$	&	15	&	0.547	&	3.29e-8	&	60	&0.653 &	7.52e-8	\\	
																					\cline{2-9}
																					&\multirow{3}{*}{$2^4$}& $2^{9}$	& 14	&	0.154	&	4.50e-8	&	54	&0.244	 &	8.96e-8	\\	
																					&& $2^{10}$	&	15	&	0.278	&	3.10e-8	&	56	&	0.380 &	7.50e-8	\\	
																					&& $2^{11}$	&	15	&	0.522	&	4.33e-8	&	64	&0.671 &	9.57e-8	\\	
																					\hline
																					\multirow{9}{*}{0.3+0.4\unitp+0.4\unitq+0\unitr}		
																					&\multirow{3}{*}{$2^2$}& $2^9$	&	28	&	0.175	&	8.46e-8	&	78	&	0.280 &	9.94e-8	\\	
																					&& $2^{10}$	&	28	&	0.306	&	4.62e-8	&	86	&	0.453 &	8.81e-8	\\	
																					&& $2^{11}$	&	42	&	0.607	&	7.87e-8	&	99	&0.812 &	8.87e-8	\\	
																					\cline{2-9}
																					&\multirow{3}{*}{$2^3$}& $2^{9}$	& 18	&	0.159	&	5.59e-8	&	66	&0.257	 &	8.71e-8	\\	
																					&& $2^{10}$	&	20	&	0.281	&	7.73e-8	&	68	&	0.392 &	7.49e-8	\\	
																					&& $2^{11}$	&	21	&	0.545	&	6.93e-8	&	71	&0.694 &	9.57e-8	\\	
																					\cline{2-9}
																					&\multirow{3}{*}{$2^4$}& $2^{9}$	& 14	&	0.152	&	6.08e-8	&	53	&0.232	 &	9.41-8	\\	
																					&& $2^{10}$	&	15	&	0.288	&	2.38e-8	&	58	&	0.382 &	7.33e-8	\\	
																					&& $2^{11}$	&	15	&	0.527	&	5.21e-8	&	63	&0.667 &	8.90e-8	\\	
																					\hline
																				\end{tabular}
																			\end{center}
																		\end{table}	
																	}

																	\begin{table}[h]
																		\caption{Numerical results of  iterative solutions of \eqref{hermqtsystem} arising from Example \ref{ar2}}\label{ar2exacttable}
																		\renewcommand{\arraystretch}{1.1}
																		\setlength{\tabcolsep}{0.7em}
																		\begin{center}
																			\begin{tabular}{c|c|ccc|ccc}
																				\hline
																				\multirow{2}{*}{$\beta$}&\multirow{2}{*}{$n$} &\multicolumn{3}{c|}{PCG-${\bf C}$}&\multicolumn{3}{c}{CG} \\    
																				\cline{3-8}
																				&&$\mathrm{Iter}$&$\mathrm{CPU}$&Error&$\mathrm{Iter}$&$\mathrm{CPU}$&Error\\
																				\hline													
																				\multirow{4}{*}{-0.08+0.21\unitp-0.8\unitq-0.79\unitr}		
																				& $2^8$	&	2	&	0.096	&	5.73e-13	&	119	&	0.339 &	9.38e-8	\\	
																				& $2^9$	&	2	&	0.131	&	6.07e-13	&119	&	0.402 &	9.38e-8	\\	
																				& $2^{10}$	&	2	&	0.254	&	6.22e-13	&	119	&0.545 &	9.38e-8	\\	
																				& $2^{11}$	& 2	&	0.509	&	6.29e-13	&	119	&0.871	 &	9.38e-8	\\	
																				\hline
																				\multirow{4}{*}{-0.2+0.18\unitp-1.19\unitq-0.07\unitr}	
																				& $2^8$	&	2	&	0.068	&	1.44e-13	&	83	&	0.233 &	9.55e-8	\\	
																				& $2^9$	&	2	&	0.126	&	1.90e-13	&	83	&	0.302 &	9.55e-8	\\	
																				& $2^{10}$	&	2	&	0.246	&	1.80e-13	&	83	&0.441 &	9.55e-8	\\	
																				& $2^{11}$	& 2	&	0.497	&	1.88e-13	&	83	&0.744	 &	9.55e-8	\\	
																				\hline
																				\multirow{4}{*}{-0.52-0.32\unitp-0.01\unitq-1.23\unitr}	
																				& $2^8$	&	2	&	0.066	&	2.57e-14	&	54	&	0.172 &	9.40e-8	\\	
																				& $2^9$	&	2	&	0.124	&	2.40e-14	&	54	&	0.235 &	9.40e-8	\\	
																				& $2^{10}$	&	2	&	0.243	&	2.44e-14	&	54	&0.367 &	9.40e-8	\\	
																				& $2^{11}$	& 2	&	0.488	&	2.72e-14	&	54	&0.650	 &	9.40e-8	\\	
																				\hline
																			\end{tabular}
																		\end{center}
																	\end{table}

																	\begin{table}[h]
																		\caption{Numerical results of  iterative solutions of \eqref{apphermqtsystem} arising from Example \ref{ar2} with $M=mn+1$}\label{ar2apptable}
																		\renewcommand{\arraystretch}{1.1}
																		\setlength{\tabcolsep}{0.7em}
																		\hspace{-6mm}
																		\begin{tabular}{c|c|c|ccc|ccc}
																			\hline
																			\multirow{2}{*}{$\beta$}&\multirow{2}{*}{$m$}&\multirow{2}{*}{$n$} &\multicolumn{3}{c|}{PCG-${\bf C}$}&\multicolumn{3}{c}{CG} \\    
																			\cline{4-9}
																			&&&$\mathrm{Iter}$&$\mathrm{CPU}$&Error&$\mathrm{Iter}$&$\mathrm{CPU}$&Error\\
																			\hline													
																			\multirow{9}{*}{0.9+0.9\unitp+0.5\unitq+1.3\unitr}		
																			&\multirow{3}{*}{$2^2$}& $2^9$	&	36	&	0.196	&	5.04e-8	&	64	&	0.258 &	9.50e-8	\\	
																			&& $2^{10}$	&	33	&	0.318	&	9.13e-8	&	64	&	0.392 &	7.55e-8	\\	
																			&& $2^{11}$	&	46	&	0.622	&	5.92e-8	&	76	&0.715 &	7.77e-8	\\	
																			\cline{2-9}
																			&\multirow{3}{*}{$2^3$}& $2^{9}$	& 19	&	0.162	&	9.51e-8	&	50	&0.227	 &	9.90e-8	\\	
																			&& $2^{10}$	&	18	&	0.283	&	7.19e-8	&	50	&	0.362 &	6.85e-8	\\	
																			&& $2^{11}$	&	19	&	0.545	&	7.23e-8	&	57	&0.668 &	7.60e-8	\\	
																			\cline{2-9}
																			&\multirow{3}{*}{$2^4$}& $2^{9}$	& 13	&	0.147	&	9.94e-8	&	41	&0.208	 &	8.28e-8	\\	
																			&& $2^{10}$	&	14	&	0.278	&	5.66e-8	&	44	&	0.344 &	7.44e-8	\\	
																			&& $2^{11}$	&	14	&	0.533	&	4.79e-8	&	43	&0.617 &	6.96e-8	\\	
																			\hline
																			\multirow{9}{*}{-1.9-0.6\unitp+0.3\unitq+0\unitr}		
																			&\multirow{3}{*}{$2^2$}& $2^9$	&	29	&	0.191	&	7.99e-8	&	67	&	0.259 &	9.79e-8	\\	
																			&& $2^{10}$	&	37	&	0.330	&	6.92e-8	&	69	&	0.403 &	7.98e-8	\\	
																			&& $2^{11}$	&	38	&	0.600	&	8.56e-8	&	69	&0.697 &	8.68e-8	\\	
																			\cline{2-9}
																			&\multirow{3}{*}{$2^3$}& $2^{9}$	& 17	&	0.160	&	7.74e-8	&	46	&0.224	 &	8.46e-8	\\	
																			&& $2^{10}$	&	18	&	0.284	&	8.83e-8	&	48	&	0.356 &	7.98e-8	\\	
																			&& $2^{11}$	&	22	&	0.561	&	5.24e-8	&	53	&0.633 &	8.15e-8	\\	
																			\cline{2-9}
																			&\multirow{3}{*}{$2^4$}& $2^{9}$	& 14	&	0.147	&	5.56e-8	&	40	&0.204	 &	7.81e-8	\\	
																			&& $2^{10}$	&	15	&	0.267	&	3.36e-8	&	44	&	0.343 &	7.01e-8	\\	
																			&& $2^{11}$	&	14	&	0.516	&	6.83e-8	&	43	&0.601 &	6.98e-8	\\	
																			\hline
																			\multirow{9}{*}{-2-0.6\unitp-0.4\unitq-0.1\unitr}		
																			&\multirow{3}{*}{$2^2$}& $2^9$	&	25	&	0.172	&	7.70e-8	&	61	&	0.250 &	6.90e-8	\\	
																			&& $2^{10}$	&	32	&	0.321	&	6.39e-8	&	65	&	0.392 &	7.08e-8	\\	
																			&& $2^{11}$	&	46	&	0.623	&	9.54e-8	&	69	&0.693 &	8.30e-8	\\	
																			\cline{2-9}
																			&\multirow{3}{*}{$2^3$}& $2^{9}$	& 19	&	0.166	&	4.04e-8	&	43	&0.217	 &	9.51e-8	\\	
																			&& $2^{10}$	&	19	&	0.290	&	4.42e-8	&	44	&	0.363 &	8.82e-8	\\	
																			&& $2^{11}$	&	19	&	0.546	&	4.78e-8	&	47	&0.623 &	9.90e-8	\\	
																			\cline{2-9}
																			&\multirow{3}{*}{$2^4$}& $2^{9}$	& 13	&	0.145	&	5.99e-8	&	38	&0.202	 &	7.85-8	\\	
																			&& $2^{10}$	&	14	&	0.268	&	5.85e-8	&	39	&	0.323 &	6.51e-8	\\	
																			&& $2^{11}$	&	15	&	0.529	&	2.80e-8	&	40	&0.595 &	9.94e-8	\\	
																			\hline
																		\end{tabular}
																	\end{table}


																	\section{Concluding Remarks}
																	In this paper, 
																	we have studied a sequence of Hermitian quaternion Toeplitz matrices generated by a quaternion-valued function 
																	which is the sum of a real-valued function and an odd function with imaginary component. This is different from the case 
																	of Hermitian complex Toeplitz matrices generated by real-valued functions only. As an example, 
																	we studied the Strang circulant preconditioner for quaternion Hermitian Toeplitz matrix. We have shown that 
																	the Strang circulant preconditioner can be diagonalized by discrete Fourier transform matrix whereas a general quaternion
																	circulant matrix cannot be diagonized. Both theoretical and numerical results are presented to demonstrate 
																	the effectiveness of quaternion circulant preconditioner for solving quaternion Toeplitz system.
																	
																	It is interesting to note that the spectra of complex Toeplitz matrices are characterized by 
																	their complex generating functions on the unit circle
																	in the complex plane and their winding numbers \cite{reichel1992eigenvalues}.  
																	As a future research work, it is worth how the spectra of 
																	quaternion Toeplitz matrices are related to their quaternion generating functions and the possible 
																	generalization of winding number in the hypercomplex domain. 
						
\section*{Acknowledgments}
The work of Xue-Lei Lin was partially supported by research grants: 12301480 from NSFC, 2025A1515010945 from Natural Science Foundation of Guangdong Province.										
																	\bibliographystyle{siam}
	
																	\begin{appendix}
																		\section{Proof of Theorem \ref{quatszegothm}}\label{thmquatszegothmproof}
																		Note that $\matcomplexify{T_n}$ is similar to the following matrix $B_n$ by a permutation transformation
																		\begin{equation*}
																			B_n:=\left[B_n^{(s,l)}\right]_{s,l=1}^{n},
																		\end{equation*}
																		with
																		\begin{equation*}
																			B_n^{(s,l)}=\left[\begin{array}[c]{cc}
																				[\phi_1(T_n)](s,l)&[-\phi_2(T_n)](s,l)\\
																				&\\
																				\Big[\overline{\phi_2(T_n)}\Big](s,l)&\Big[\overline{\phi_1(T_n)}\Big](s,l)
																			\end{array}\right],\quad s,l=1,2,...,n.
																		\end{equation*}
																		Then, matrix similarity implies that $\matcomplexify{T_n}$ and $B_n$ have the same set of eigenvalues counting the multiplicity. We assume that their eigenvalues are  in ascending order, specifically that $\lambda_1(\cdot)\leq \lambda_2(\cdot)\leq \cdots \leq \lambda_m(\cdot) $. Moreover, according to results in \cite{miranda2000asymptotic}, we know that if $f\in L^{\infty}([-\pi,\pi])$, then for any continuous function $F$ on the closed interval $[\check{a},\hat{a}]$, it also holds that
																		\begin{equation}\label{linfspectrdisctrieq}
																			\lim\limits_{n\rightarrow\infty}\frac{1}{2n}\sum\limits_{s=1}^{2n}F(\lambda_{s}(B_n))=\frac{1}{4\pi}\int_{-\pi}^{\pi}\sum\limits_{s=1}^{2}F\left(\lambda_{s}\left(G[f](x)\right)\right)dx;
																		\end{equation}
																		and that if $f\in L^{2}([-\pi,\pi])$, then for any continuous function $F$ with compact support in $\mathbb{R}$, it holds that
																		\begin{equation}\label{l2spectrdisctrieq}
																			\lim\limits_{n\rightarrow\infty}\frac{1}{2n}\sum\limits_{s=1}^{2n}F(\lambda_{s}(B_n))=\frac{1}{4\pi}\int_{-\pi}^{\pi}\sum\limits_{s=1}^{2}F\left(\lambda_{s}\left(G[f](x)\right)\right)dx.
																		\end{equation}
																		Here, $G[f](\cdot)$ is defined in Theorem \ref{qhermitmatrayleightogfuncthm}\bfrefnumber{i}.
																		
																		Let $T_n=U^{*}DU$ be unitary diagonalization of $T_n$ with $U$ being an unitary matrix and $D$ being a real diagonal matrix. Then, it is straightforward to see that 
																		\begin{equation*}
																			\matcomplexify{T_n}=\matcomplexify{U}^{*}\matcomplexify{D}\matcomplexify{U}.
																		\end{equation*} 
																		Note that $\matcomplexify{U}$ is also unitary and that $\matcomplexify{D}={\rm diag}(D,D)$ is a diagonal matrix. Thus, 
																		\begin{equation*}
																			\lambda_{2s-1}\left(\matcomplexify{T_n}\right)=\lambda_{2s}\left(\matcomplexify{T_n}\right)=\lambda_{s}(T_n),\quad s=1,2,...,n.
																		\end{equation*}
																		Then, for $F$ appears in \eqref{linfspectrdisctrieq} or \eqref{l2spectrdisctrieq}, it holds that
																		\begin{equation}\label{lefthandsideeqform}
																			\sum\limits_{s=1}^{2n}F(\lambda_{s}(B_n))=\sum\limits_{s=1}^{2n}F(\lambda_{s}\left(\matcomplexify{T_n}\right))=2\sum\limits_{s=1}^{n}F(\lambda_{s}(T_n)),
																		\end{equation}
																		where the first equality comes from the fact that $\matcomplexify{T_n}$ is similar to $B_n$.
																		
																		On the other hand, from the proof of Theorem \ref{qhermitmatrayleightogfuncthm}, we see that
																		\begin{equation*}
																			\lambda_1\left(G[f](x)\right)=\check{f}(x),\quad 	\lambda_2\left(G[f](x)\right)=\hat{f}(x),
																		\end{equation*}
																		which together with \eqref{linfspectrdisctrieq}, \eqref{l2spectrdisctrieq} and \eqref{lefthandsideeqform} implies that
																		\begin{equation*}
																			\lim\limits_{n\rightarrow\infty}\frac{1}{n}\sum\limits_{s=1}^{n}F(\lambda_{s}(T_n))=\frac{1}{4\pi}\int_{-\pi}^{\pi}\left[F(\hat{f}(x))+F(\check{f}(x))\right]dx,
																		\end{equation*}
																		holds for $F$ appearing in \eqref{linfspectrdisctrieq} or \eqref{l2spectrdisctrieq}.
																		The proof is complete.
																		
																		\section{Proof of Lemma \ref{tnminuscstneigcontrllm}}\label{lemmtnmiusproof}
																		By proof of Lemma \ref{strangcireiglimitlm}, we have
																		\begin{equation*}
																			\matcomplexify{T_n}-\matcomplexify{c(T_n)}=\left[\begin{array}[c]{cc}
																				\underbrace{\phi_1(T_n)-	c(\phi_1(T_n))}_{:=B_{n}^{(1,1)}}&\underbrace{c(\phi_2(T_n))-\phi_2(T_n)}_{:=B_{n}^{(1,2)}}\\
																				&\\
																				\underbrace{\overline{\phi_2(T_n)}-	c\left(\overline{\phi_2(T_n)}\right)}_{:=B_{n}^{(2,1)}}&\underbrace{\overline{\phi_1(T_n)}-c\left(\overline{\phi_1(T_n)}\right)}_{:=B_{n}^{(2,2)}}
																			\end{array}\right].
																		\end{equation*}
																		Since $\{t_s\}_{s\in\mathbb{N}}$ is absolutely summable, $\lim\limits_{n\rightarrow\infty}\sum\limits_{l>n}|t_l|=0$. That means for any $\epsilon>0$, there exists $n_0>0$ such that $ \sum\limits_{l\geq n_0}|t_l|\leq \frac{\epsilon}{2}$.

																		Let $U_{(n,n_0)}^{(s,l)}$ be the $n\times n$ matrix obtained from $B_{n}^{(s,l)}$ by replacing the $(n-n_0)$-by-$(n-n_0)$ leading principal sub-matrix of $B_{n}^{(s,l)}$ with zero matrix for $s,l=1,2$. Then, it is clear that
																		$\max\limits_{s,l\in\{1,2\}}{\rm rank}(U_{(n,n_0)}^{(s,l)})\leq 2n_0$.
																		
																		Note that \bfrefnumber{i} the nonzero entries of $B_n^{(1,1)}-U_{(n,n_0)}^{(1,1)}$ are all located in its $(n-n_0)$-by-$(n-n_0)$ leading principal sub-matrix; \bfrefnumber{ii} the central $2\lfloor (n-1)/2\rfloor+1$ diagonals of $(n-n_0)$-by-$(n-n_0)$ leading principal sub-matrix of $B_n^{(1,1)}-U_{(n,n_0)}^{(1,1)}$ are all zeros; \bfrefnumber{iii} $(n-n_0)$-by-$(n-n_0)$ leading principal sub-matrix of $B_n^{(1,1)}-U_{(n,n_0)}^{(1,1)}$ is a Toeplitz matrix. It is easy to see from these facts that $||B_n^{(1,1)}-U_{(n,n_0)}^{(1,1)}||_{1}$ is  equal to $1$-norm of first column of  $(n-n_0)$-by-$(n-n_0)$ leading principal sub-matrix of $B_n^{(1,1)}-U_{(n,n_0)}^{(1,1)}$. That means
																		\begin{align*}
																			&||B_n^{(1,1)}-U_{(n,n_0)}^{(1,1)}||_{1}\\
																			&=\sum\limits_{s=\lfloor (n-1)/2\rfloor+2}^{n-n_0}\left|[B_n^{(1,1)}-U_{(n,n_0)}^{(1,1)}](s,1)\right|=\sum\limits_{s=\lfloor (n-1)/2\rfloor+2}^{n-n_0}\left|(B_n^{(1,1)})(s,1)\right|\\
																			&=\sum\limits_{s=\lfloor (n-1)/2\rfloor+2}^{n-n_0}|[\phi_1(T_n)](s,1)-[c(\phi_1(T_n))](s,1)|\\
																			&\leq \sum\limits_{s=\lfloor (n-1)/2\rfloor+2}^{n-n_0}|t_s^{(0)}+t_s^{(1)}\unitp|+\sum\limits_{s=n_0}^{\lfloor (n-1)/2\rfloor}|t_s^{(0)}-t_s^{(1)}\unitp|\leq \sum\limits_{s=n_0}^{n-n_0}|t_s^{(0)}|+|t_s^{(1)}|.
																		\end{align*}
																		{\color{black} Similarly, one can show that		
																			\begin{align*}
																				&||B_n^{(2,2)}-U_{(n,n_0)}^{(2,2)}||_{1}\leq \sum\limits_{s=n_0}^{n-n_0}|t_s^{(0)}|+|t_s^{(1)}|,\quad 
																				||B_n^{(1,2)}-U_{(n,n_0)}^{(1,2)}||_{1}\leq \sum\limits_{s=n_0}^{n-n_0}|t_s^{(2)}|+|t_s^{(3)}|,\\
																				&||B_n^{(2,1)}-U_{(n,n_0)}^{(2,1)}||_{1}\leq \sum\limits_{s=n_0}^{n-n_0}|t_s^{(2)}|+|t_s^{(3)}|.
																		\end{align*}}
																		Denote
																		\begin{equation*}
																			W_n=\left[\begin{array}[c]{cc}
																				B_n^{(1,1)}-U_{(n,n_0)}^{(1,1)}&B_n^{(1,2)}-U_{(n,n_0)}^{(1,2)}\\
																				&\\
																				B_n^{(2,1)}-U_{(n,n_0)}^{(2,1)}&B_n^{(2,2)}-U_{(n,n_0)}^{(2,2)}
																			\end{array}\right],~U_{(n,n_0)}=\left[\begin{array}[c]{cc}
																				U_{(n,n_0)}^{(1,1)}&U_{(n,n_0)}^{(1,2)}\\
																				&\\
																				U_{(n,n_0)}^{(2,1)}&U_{(n,n_0)}^{(2,2)}
																			\end{array}\right].
																		\end{equation*}
																		Then, $\matcomplexify{T_n}-\matcomplexify{c(T_n)}=W_n+U_{(n,n_0)}.$  
																		Note that both $W_n$ and $U_{(n,n_0)}$ are Hermitian  
																		${\rm rank}(U_{(n,n_0)})\leq 4n_0.$

																		{\color{black}	On the other hand,
																			\begin{eqnarray*}
																				&& ||W_n||_2=\rho(W_n)\leq ||W_n||_{1}\\
																				&\leq & \max\Big\{||B_n^{(1,1)}-U_{(n,n_0)}^{(1,1)}||_{1}+||B_n^{(2,1)}-U_{(n,n_0)}^{(2,1)}||_{1},  ~||B_n^{(1,2)}-U_{(n,n_0)}^{(1,2)}||_{1}+||B_n^{(2,2)}-U_{(n,n_0)}^{(2,2)}||_{1}\Big\}\\
																				&\leq & \sum\limits_{s=n_0}^{n-n_0}|t_s^{(0)}|+|t_s^{(1)}|+|t_s^{(2)}|+|t_s^{(3)}| \leq 2\sum\limits_{s=n_0}^{n-n_0}|t_s|\leq 2\sum\limits_{s\geq n_0}|t_s|\leq \epsilon,
																			\end{eqnarray*}
																			where the fourth inequality is from Cauchy Schwartz inequality. The result follows.}

																		\section{Proof of Theorem \ref{strangcircprecedmatspectrumthm}}\label{proofofcircprecedmatspecclstthm}
																		\begin{proof}
																			Since $\check{f}$ is a continuous positive function, there exists $x_0\in[-\pi,\pi]$ such that $\check{f}_{\min}=f(x_0)>0$. Note that $\hat{f}\geq \check{f}$,  we have $\hat{f}_{\max}\geq \check{f}_{\min}>0$.
																			It follows from Lemma  \ref{strangcireiglimitlm} that there exists $n_1>0$ such that for all $n\geq n_1$, it holds that
																			\begin{equation*}
																				\lambda_{\min}(c(T_n))\geq \frac{\check{f}_{\min}}{2}>0{\rm~~and~~}\lambda_{\max}(c(T_n))\leq \frac{3\check{f}_{\max}}{2}
																			\end{equation*}
																			which guarantees that  $c(T_n)$ is HPD. 
																			Then,
																			\begin{align*}
																				\sigma\left(c(T_n)^{-1}T_n\right)&=\sigma\left(c(T_n)^{-1/2}T_nc(T_n)^{-1/2}\right)\\
																				&=\sigma\left(\matcomplexify{c(T_n)^{-1/2}T_nc(T_n)^{-1/2}}\right)\\
																				&=\sigma\left(\matcomplexify{c(T_n)^{-1/2}}\matcomplexify{T_n}\matcomplexify{c(T_n)^{-1/2}}\right)\\
																				&=\sigma\left(\matcomplexify{c(T_n)}^{-1/2}\matcomplexify{T_n}\matcomplexify{c(T_n)}^{-1/2}\right).
																			\end{align*}
																			Note that
																			\begin{align*}
																				&\sigma(\matcomplexify{c(T_n)}^{-1/2}\matcomplexify{T_n}\matcomplexify{c(T_n)}^{-1/2})\\
																				&=\sigma\left(I_{2n}+\matcomplexify{c(T_n)}^{-1/2}[\matcomplexify{T_n}-\matcomplexify{c(T_n)}]\matcomplexify{c(T_n)}^{-1/2}\right).
																			\end{align*}
																			By Lemma \ref{tnminuscstneigcontrllm},  there exists $n_0\geq n_1$ such that for all $n>n_0$, it holds that
																			\begin{equation*}
																				\matcomplexify{T_n}-\matcomplexify{c(T_n)}=W_n+U_{(n,n_0)},\quad ||W_n||_2\leq \frac{\check{f}_{\min}\epsilon}{2},\quad {\rm rank}(U_{(n,n_0)})\leq 4n_0.
																			\end{equation*}
																			Then,
																			\begin{align*}
																				&\matcomplexify{c(T_n)}^{-1/2}[\matcomplexify{T_n}-\matcomplexify{c(T_n)}]\matcomplexify{c(T_n)}^{-1/2}\\
																				&=\underbrace{\matcomplexify{c(T_n)}^{-1/2}W_n\matcomplexify{c(T_n)}^{-1/2}}_{:=\tilde{W}_n}+\underbrace{\matcomplexify{c(T_n)}^{-1/2}U_{(n,n_0)}\matcomplexify{c(T_n)}^{-1/2}}_{:=\tilde{U}_{(n,n_0)}},
																			\end{align*}
																			and for $n >n_0$,
																			\begin{align*}
																				&||\tilde{W}_n||_2\leq ||\matcomplexify{c(T_n)}^{-1/2}||_2^2||W_n||_2\leq \frac{2}{\check{f}_{\min}}\times \frac{\check{f}_{\min}\epsilon}{2}=\epsilon,\\
																				&{\rm rank}(\tilde{U}_{(n,n_0)})={\rm rank}(U_{(n,n_0)})\leq 4n_0.
																			\end{align*}
																			{\color{black} Hence,
																				\begin{align*}
																					\sigma\left(c(T_n)^{-1}T_n\right)&=\sigma\left(I_{2n}+\tilde{W}_n+\tilde{U}_{(n,n_0)}\right) =\left\{1+\lambda\Big|\lambda\in\sigma(\tilde{W}_n+\tilde{U}_{(n,n_0)})\right\}.
																				\end{align*}		
																				Moreover, Weyl's Theorem implies that for $s=1,2,...,2n$,
																				\begin{align}\label{weylneq1}
																					\lambda_1(\tilde{W}_n)+\lambda_s(\tilde{U}_{(n,n_0)}) \leq \lambda_s(\tilde{W}_n+\tilde{U}_{(n,n_0)}) \leq \lambda_{2n}(\tilde{W}_n)+\lambda_s(\tilde{U}_{(n,n_0)}). 
																			\end{align}}
																			By the facts that $\tilde{U}_{(n,n_0)}$ is Hermitian and that ${\rm rank}(\tilde{U}_{(n,n_0)})\leq 4n_0$, we see that $\tilde{U}_{(n,n_0)}$ has at least $2n-4n_0$ zero eigenvalues, which together with \eqref{weylneq1} implies that there are at least $2n-4n_0$ many right eigenvalues of $\tilde{W}_n+\tilde{U}_{(n,n_0)}$ locating in the interval $[\lambda_1(\tilde{W}_n),\lambda_{2n}(\tilde{W}_n)]$, which is further contained in $[-\epsilon,\epsilon]$. Thus, $c(T_n)^{-1}T_n$ has at most $4n_0$ many eigenvalues in $\sigma\left(c(T_n)^{-1}T_n\right)$ lying outside of the interval $[1-\epsilon,1+\epsilon]$,
																			whenever $n>n_0$. 
																			
																			By the discussion above, we see that $\sigma(c(T_n)^{-1}T_n)\subset\mathbb{R}$.
																			{\color{black} Let $\mu_1$ be the smallest eigenvalue in $\sigma(c(T_n)^{-1}T_n)$ with corresponding eigenvector ${\bf z}_1$, that is, $c(T_n)^{-1}T_n{\bf z}_1=\mu_1{\bf z}_1$. Then, for any $n>n_0$, $T_n{\bf z}_1=c(T_n){\bf z}_1\mu_1$ and 
																				\begin{align*}
																					&\mu_1=\frac{{\bf z}_1^*T_n{\bf z}_1}{{\bf z}_1^*c(T_n){\bf z}_1}=\frac{\veccomplexify{z_1}^{*}\matcomplexify{T_n}\veccomplexify{{\bf z}_1}}{\veccomplexify{z_1}^{*}\matcomplexify{c(T_n)}\veccomplexify{{\bf z}_1}} \\
																					&\geq \frac{\lambda_{\min}(\matcomplexify{T_n})||\veccomplexify{{\bf z}_1}||_2}{\lambda_{\max}(\matcomplexify{c(T_n)})||\veccomplexify{{\bf z}_1}||_2}
																					=\frac{\lambda_{\min}(T_n)}{\lambda_{\max}(c(T_n))}\geq \frac{2\check{f}_{\min}}{3\hat{f}_{\max}}>0.
																			\end{align*}}
																			The proof is complete.
																		\end{proof}
																		
																		\section{Proof of Theorem \ref{strangcircmainthm}}\label{proofofcircspcvgmainthm}
																		\begin{proof}
																			It follows from Theorem \ref{strangcircprecedmatspectrumthm} that there exists $n_0>0$ such that $c(T_n)^{-1}T_n$ has at most $4n_0$ eigenvalues in $\sigma(c(T_n)^{-1}T_n)$ lying outside of the interval $[1-\epsilon,1+\epsilon]$ and that $\lambda_{\min}(c(T_n)^{-1}T_n)\geq \frac{2\check{f}_{\min}}{3\hat{f}_{\max}}$. Moreover, from the proof of Theorem \ref{qhpdpcgcvtthm}, we see that $|\sigma(c(T_n)^{-1}T_n)|\leq n$. Thus, the $n$ many right eigenvalues (counting multiplicity) of $c(T_n)^{-1}T_n$ can be ordered as follows:
																			\begin{equation*}
																				\hspace{-3mm}		0<\frac{2\check{f}_{\min}}{3\hat{f}_{\max}}\leq \mu_1\leq \cdots\leq \mu_p\leq 1-\epsilon\leq \mu_{p+1}\leq \cdots\leq \mu_{n-q}\leq 1+\epsilon\leq \mu_{n-q+1}\leq \cdots\leq \mu_{n},
																			\end{equation*}
																			for some positive integers $p,q$ such that $p+q\leq 4n_0$. Then, it follows from Theorem \ref{preparedsuplinearpcgthm} that
																			\begin{align*}
																				||e^{(\ell)}||_{T_n}&\leq 2||e^{(0)}||_{T_n}\left(\frac{\sqrt{(1+\epsilon)/(1-\epsilon)}-1}{\sqrt{(1+\epsilon)/(1-\epsilon)}+1}\right)^{\ell-p-q}\prod\limits_{s=1}^{p}\left(\frac{1+\epsilon-\mu_s}{\mu_s}\right)\\
																				&\leq 2||e^{(0)}||_{T_n}\left(\frac{\sqrt{(1+\epsilon)/(1-\epsilon)}-1}{\sqrt{(1+\epsilon)/(1-\epsilon)}+1}\right)^{\ell-4n_0}\prod\limits_{s=1}^{4n_0}\left(\frac{1+\epsilon-2\check{f}_{\min}/(3\hat{f}_{\max})}{2\check{f}_{\min}/(3\hat{f}_{\max})}\right),
																			\end{align*}
																			for $\ell\geq 4n_0\geq p+q$.
																			Note that
																			\begin{equation*}
																				\frac{\sqrt{(1+\epsilon)/(1-\epsilon)}-1}{\sqrt{(1+\epsilon)/(1-\epsilon)}+1}=\frac{\sqrt{1+\epsilon}-\sqrt{1-\epsilon}}{\sqrt{1+\epsilon}+\sqrt{1-\epsilon}}=\frac{1-\sqrt{1-\epsilon^2}}{\epsilon}\leq \epsilon.
																			\end{equation*}
																			Therefore,
																			\begin{equation*}
																				||e^{(\ell)}||_{T_n}\leq 2||e^{(0)}||_{T_n}\epsilon^{\ell-4n_0}\prod\limits_{s=1}^{4n_0}\left(\frac{1+\epsilon-2\check{f}_{\min}/(3\hat{f}_{\max})}{2\check{f}_{\min}/(3\hat{f}_{\max})}\right),\quad \ell\geq 4n_0.
																			\end{equation*}
																			Taking
																			\begin{equation*}
																				C(\epsilon):=2\left(\frac{1+\epsilon-2\check{f}_{\min}/(3\hat{f}_{\max})}{2\check{f}_{\min}/(3\hat{f}_{\max})}\right)^{4n_0}
																			\end{equation*}
																			leads to the following inequality
																			\begin{equation*}
																				||e^{(\ell)}||_{T_n}\leq C(\epsilon)\epsilon^{\ell-4n_0}||e^{(0)}||_{T_n},\quad \ell\geq 4n_0.
																			\end{equation*}
																			The proof is complete.
																		\end{proof}
																	\end{appendix}

																\end{document}